\renewcommand{\l}{\left}
\renewcommand{\r}{\right}
\title{Gradient-Variation Bound for Online Convex Optimization with Constraints}
\begin{document}
\author{Shuang Qiu\thanks{University of Chicago. 
Email: \texttt{qiush@umich.edu}.} 
       \qquad
	Xiaohan Wei\thanks{Meta Platforms, Inc.
Email: \texttt{ubimeteor@fb.com}.} 
\qquad
	Mladen Kolar\thanks{University of Chicago.
Email: \texttt{mkolar@chicagobooth.edu}.}  
       }       
\maketitle

\begin{abstract}
We study online convex optimization with constraints consisting of multiple functional constraints and a relatively simple constraint set, such as a Euclidean ball. As enforcing the constraints at each time step through projections is computationally challenging in general, we allow decisions to violate the functional constraints but aim to achieve a low regret and cumulative violation of the constraints over a horizon of $T$ time steps. 
First-order methods achieve an $\mathcal{O}(\sqrt{T})$ regret and an $\mathcal{O}(1)$ constraint violation,
which is the best-known bound under the Slater's condition, but do not take into account the structural information of the problem. 
Furthermore, the existing algorithms and analysis are limited to Euclidean space.  
In this paper, we provide an \emph{instance-dependent} bound for online convex optimization with complex constraints obtained by a novel online primal-dual mirror-prox algorithm. Our instance-dependent regret is quantified by the total gradient variation $V_*(T)$ in the sequence of loss functions. The proposed algorithm 
works in \emph{general} normed spaces and simultaneously achieves an $\mathcal{O}(\sqrt{V_*(T)})$ regret and an $\mathcal{O}(1)$ constraint violation, which is never worse than the best-known $( \mathcal{O}(\sqrt{T}), \mathcal{O}(1) )$ result and improves over previous works that applied mirror-prox-type algorithms for this problem achieving $\mathcal{O}(T^{2/3})$ regret and constraint violation.
Finally, our algorithm is computationally efficient, as it only performs mirror descent steps in each iteration instead of solving a general Lagrangian minimization problem.  
\end{abstract}

\section{Introduction}
%
%
%
%
%
%
%
%
%
We study online convex optimization (OCO) with a sequence of loss functions $f^1,~f^2,~\cdots$ that arbitrarily vary over time. The decision maker chooses an action $\xb_t$ from a set $\cX$ and then observes the loss function $f^t$ at each time step $t$. The goal is to minimize the regret over the $T$ time steps, which is defined as
\begin{align}
\Regret(T) = \sum_{t=1}^T f^t(\xb_t) - \sum_{t=1}^T f^t(\xb^*), \label{eq:def_regret}
\end{align}
where $\xb_t$ is the decision chosen in step $t$, and $\xb^*  \in \argmin_{\xb \in \cX} \sum_{t=1}^T f^t(\xb)$ is the best decision in hindsight. The regret in \eqref{eq:def_regret} compares the sequence of decisions with a best strategy $\xb^*$ in hindsight for all loss functions over $T$ time steps to measure the performance of an online learning algorithm.

This problem has been extensively studied in existing work \citep{Cesa-Bianchi96TNN,Gordon99COLT,Zinkevich03ICML,Hazan16FoundationTrends}.
Online mirror descent (OMD) is a commonly used first-order algorithm that subsumes online gradient descent (OGD) and achieves $\mathcal{O}(\sqrt{T})$ regret with a dependence on the dimension related to the chosen norm in the optimization space and logarithmic dependence on the probability simplex \citep{hazan2016introduction}. 
Existing work in OCO has also focused on characterizing an \emph{instance-dependent} regret characterized by the notion of \emph{gradient variation} \citep{chiang2012online, yang2014regret, steinhardt2014adaptivity}. 
Specifically, these works used first-order methods and characterized bounds on the regret in terms of the gradient-variation of the function gradient sequence.
Compared to the $\sqrt{T}$-type bound, the gradient-variation bound explicitly takes into account the dynamics of the observed losses, which is the structural information of the problem. For example, \citet{chiang2012online} obtained a regret that scales as $\big(\sum_{t=1}^T\max_{\xb\in\cX}\|\nabla f^t(\xb) - \nabla f^{t-1}(\xb)\|_2^2\big)^{1/2}$, which reduces to the $\mathcal O(\sqrt{T})$ regret bound only in the worst case and is better when the variation is small.

\begin{table*}[!t] 
\renewcommand{\arraystretch}{1.35}
\centering
\footnotesize
\begin{tabular}{ | >{\centering\arraybackslash}m{1.6cm} | >{\centering\arraybackslash}m{5cm} | >{\centering\arraybackslash}m{2.5cm} | >{\centering\arraybackslash}m{1.6cm} | >{\centering\arraybackslash}m{1.2cm} | >{\centering\arraybackslash}m{1.8cm} | } 
 \hline
Complex Constraint & Paper & Regret & Constraint Violation & Efficient & Space \\ \hline
\multirow{5}{*}[-0.1cm]{\xmark} & Standard OMD \citep{hazan2016introduction}  & $\sqrt{T}$ & \multirow{5}{*}[-0.1cm]{-}  & \multirow{5}{*}[-0.1cm]{\cmark}  &  General \\\cline{2-3}\cline{6-6}
 & \multirow{2}{*}[-0.05cm]{\citet{chiang2012online}}  & $\sqrt{V_2(T)}\vee L_f$ &  &  &  Euclidean  \\ 
  & & $\sqrt{V_\infty(T)}$ ($\diamond$) &  &  &  Prob. Simp.  \\ \cline{2-3}\cline{6-6}
 &\citet{steinhardt2014adaptivity} & $\sqrt{B_{i^{\flat}}(T)}$ ($\diamond$) &  &  &  Euclidean  \\ \cline{2-3}\cline{6-6} 
  &\citet{yang2014regret}  & $\sqrt{V_*(T)}\vee L_f$ &  &  &  General  \\ \hline
\multirow{9}{*}[-0.5cm]{\cmark} & \multirow{2}{*}[-0.05cm]{\citet{mahdavi2012trading}}   & $\sqrt{T}$ & $T^{3/4}$ & \multirow{7}{*}[-0.2cm]{\cmark}  &  \multirow{7}{*}[-0.2cm]{Euclidean} \\
& & $T^{2/3}$ & $T^{2/3}$ &  & \\ \cline{2-4}
 & \citet{jenatton2016adaptive} & $T^{\max\{\beta, 1-\beta\}}$ & $T^{1-\beta/2}$ &  &   \\ \cline{2-4}
& \citet{yu2017online} & $\sqrt{T}$ & $\sqrt{T}$ &  &   \\ \cline{2-4}
 & \citet{yuan2018online}  ($\bullet$)  & $T^{\max\{\beta, 1-\beta\}}$ & $T^{1-\beta/2}$&  &   \\  \cline{2-4}
  &\multirow{2}{*}[-0.05cm]{\citet{yi2021regret}  ($\bullet$) } & $T^{\max\{\beta, 1-\beta\}}$ & $T^{(1-\beta)/2}$&  &   \\  
&  & $T^{\max\{\beta, 1-\beta\}}$ & $\sqrt{T}$&  &   \\  \cline{2-6}
 & \citet{wei2020online} & $\sqrt{T}$ ($\star$)& $\sqrt{T}$ ($\star$) &\cmark  &  General \\\cline{2-6}
 & \citet{yu2020low} & $\sqrt{T}$ & $const.$ & \xmark &  Euclidean \\ \cline{2-6}
 &  \textbf{This work} & $\sqrt{V_*(T)}\vee L_f$ ($\star$)& $const.$  ($\star$)& \cmark &  General \\ \hline
\end{tabular}
\captionsetup{font=small}

\caption{Comparison with existing works. We use \textit{const.} to denote a constant bound $\cO(1)$. The parameter $\beta$ satisfies $\beta \in (0, 1)$.  ``Complex Constraint" indicates whether a projection on the constraint set is computationally inefficient. ``Efficient" indicates whether each round only involves gradient updates to compute the decision (see the discussion in Remark \ref{re:grad_up}) such that the algorithm is computationally efficient. ``Prob. Simp." means that the bound is for the probability simplex case, which is one special case of the general space scenario. The quantity $L_f$ is the Lipschitz constant for the gradient of the loss function, i.e., $\nabla f^t$ for any $t\geq 0$. We let ($\diamond$) indicate that the bound is only for the linear loss function, in which case we also have $L_f=0$. We let ($\star$) indicate that a $\log T$ factor is imposed on the presented bound under the probability simplex setting. We let ($\bullet$) indicate another line of constrained OCO work, which is based on a different and stricter metric for constraint violation. We let $a\vee b$ denote $\max\{a, b\}$. In addition,  $B_{i^{\flat}}(T)$ is another type of gradient-variation bound. See Section \ref{sec:related} for a detailed description.}
\label{tab:comparison}
\end{table*}

In this paper, we consider a more challenging OCO problem where the feasible set $\mathcal X$ consists of not only a simple compact set $\mathcal X_0$ but also of $K$ complex functional constraints, 
\begin{align} \label{eq:def_X}
\cX = \{ \xb \in \RR^d : \xb \in \cX_0 ~\text{ and }~ g_k (\xb) \leq 0, \forall k \in [K]  \},
\end{align}
where the $k$-th constraint function $g_k (\xb)$ is convex and differentiable. OMD with a simple projection does not work well in this problem, as projection onto the complex constraint set $\cX$ is usually computationally heavy. 
Rather than requiring each decision to be feasible, it is common to allow functional constraints to be slightly violated at each time step \citep{mahdavi2012trading, jenatton2016adaptive, yu2017online, chen2017online, liakopoulos2019cautious}, but require an algorithm to simultaneously maintain a sublinear regret and constraint violation. Specifically, in addition to \eqref{eq:def_regret}, we also look at the following constraint violation
\begin{align} \label{eq:def_constr}
\Violation(T, k) := \sum_{t=1}^T g_k(\xb_t), \forall k \in \{1,\ldots, K\},
\end{align}
and aim to achieve a sublinear growth rate.
In this setting, \citet{yu2020low} achieved the best-known $\mathcal O(\sqrt T)$ regret and $\mathcal O (1)$ constraint violation under the Slater's condition, but was limited to the Euclidean space. Their algorithm solves a general Lagrangian minimization problem during each round, which requires costly inner loops to approximate the solutions. \citet{yu2017online} and \citet{wei2020online} both showed $\mathcal O(\sqrt T)$ regret and constraint violation that applies to more general non-Euclidean spaces with stochastic constraints. 
However, the study of the instance-dependent bound for OCO with complex functional constraints remains unsatisfactory in the existing works.
Our work aims to answer the following question:
\begin{center}
\emph{Can we obtain a gradient-variation bound for OCO with complex constraints via efficient first-order methods? }
\end{center}
We provide an affirmative answer to this question.
We propose a novel online primal-dual mirror-prox method and prove a strong theoretical result that it can achieve a gradient-variation regret bound and simultaneously maintain the $\cO(1)$ constraint violation in a general normed space. 
In the worst case, the upper bound matches the best-known $(\mathcal O(\sqrt T), \mathcal O (1))$ result under the Slater's condition.  
The mirror-prox algorithm \citep{nemirovski2004prox}
features two mirror-descent or gradient-descent steps bridged by an intermediate iterate, and it achieves $\mathcal O(1/T)$ convergence rate when minimizing deterministic smooth convex functions \citep{bubeck2015convex}. 
Our bound also establishes $\mathcal O(1/T)$ convergence rate, measured by $\Regret(T)/T$, when $f^t$ is the same over $t\geq 1$, which is consistent with the result in \citet{bubeck2015convex}.  

Obtaining our theoretical result is quite a challenge. 
First, although existing work using mirror-prox algorithms for the simple constraint setting can achieve the gradient-variation regret \citep{chiang2012online}, it is not obvious that this result holds in our setting due to the coupling of the primal and dual updates. According to \citet{yu2020low}, the regret bound depends on the drift of the dual iterates, which is only on the order of $\sqrt{T}$. 
Second, for the general non-Euclidean space setting, \citet{wei2020online} only achieved an $\cO(\sqrt{T})$ constraint violation for stochastic constraints, and
it is not obvious how to further improve this bound in a deterministic constraint setting. 
Moreover, \citet{mahdavi2012trading} applied 
the mirror-prox algorithm to the constrained OCO and obtained an $\mathcal{O}(T^{2/3})$ regret and constrained violation, which is suboptimal. 
Our work provides a novel theoretical analysis for the drift-plus-penalty framework \citep{yu2017simple} with gradient variation and a tight dual variable drift such that gradient-variation regret and constant constraint violation can be achieved.

\vspace{5pt}
\noindent\textbf{Contributions.}
Our theoretical contributions are 3-fold: 
\begin{itemize}

\item  We propose a novel online primal-dual mirror-prox method that can simultaneously achieve an $\mathcal{O}( \max\{ \sqrt{V_*(T)}, L_f \} )$ regret and an $\mathcal{O}(1)$ constraint violation in a general normed space $(\mathcal{X}_0,~\|\cdot\|)$ under Slater's condition. Here $L_f$ is the Lipschitz constant for $\nabla f^t$ and $V_*(T)$ is the gradient variation defined as
\begin{align}
V_*(T) = \sum_{t=1}^T \max_{\xb\in \cX_0} \|\nabla f^t(\xb) - \nabla f^{t-1}(\xb)\|_*^2, \label{eq:grad_variation}
\end{align}
where $\|\cdot\|_*$ is the dual norm w.r.t.~$\|\cdot\|$, that is, $\|\xb\|_* := \allowbreak \sup_{\|\yb\|\leq 1} \langle \xb, \yb\rangle$. We can write $*=p$ with $p\geq 1$ for different dual norms. We further show that in the probability simplex case, only additional factors of $\log T $ occur.

\item 
Even in the worst case, $V_*(T)$ is at the level of $\cO(T)$ if $\|\nabla f^t(\xb)\|_*$ is bounded in $\cX_0$. 
Thus, our bound is never worse than the best-known 
$(\mathcal O(\sqrt T), \mathcal O (1))$ bound under the Slater's condition and can be better when the variation is small. 
Our bound improves over the $(\cO(T^{2/3}), \cO(T^{2/3}))$ bound \citep{mahdavi2012trading} for OCO with complex constraints via the mirror-prox method. Our work also has a better constraint violation compared to $\cO(\sqrt{T})$ in \citet{wei2020online} in the general normed space. 

\item Our method can be efficiently implemented in that our method only involves two mirror descent steps each round with the local linearization of the constraint functions. This is in stark contrast to \citet{yu2020low} achieving the best-known rate, which requires solving a general Lagrangian minimization problem that involves entire constraint functions each round. See Table \ref{tab:comparison} for detailed comparisons.

\end{itemize}


\subsection{Related Work} \label{sec:related}
Online convex optimization (OCO) has been widely investigated. Various methods have been proposed to achieve an $\cO(\sqrt{T})$ regret in different scenarios \citep{hazan2016introduction}. Beyond the $\sqrt{T}$-type regret, recent literature investigated instance-dependent bounds, where the upper bound on regret is expressed in terms of the variation defined by the sequence of the observed losses  \citep{hazan2010extracting, chiang2012online, yang2014regret, steinhardt2014adaptivity}. \citet{hazan2010extracting} and \citet{yang2014regret} studied regret characterized by variations of the loss function sequence $\{f^t\}_{t=1}^T$. \citet{chiang2012online} defined the gradient-variation as \eqref{eq:grad_variation} and studied both linear and smooth convex losses in Euclidean space to get an $\cO(\sqrt{V_2(T)})$ bound. \citet{chiang2012online} investigated the linear loss in the probability simplex setting, obtaining an $\cO(\sqrt{V_\infty(T)})$ regret.  \citet{yang2014regret} analyzed the gradient-variation regret for the smooth convex losses and obtained an $\cO(\sqrt{V_*(T)})$ bound in a general non-Euclidean space.
\citet{steinhardt2014adaptivity} achieved a different gradient-variation bound $\sqrt{B_{i^{\flat}}(T)}$ with $B_{i^{\flat}}(T):=\sum_{t=1}^T (\zb_{t,i^\flat} - \zb_{t-1,i^\flat})^2$ for the setting of linear losses $f^t(\xb) = \langle \xb, \zb_t \rangle$, $t\in [T]$, where $i^\flat:=\argmin_{i\in [d]}  \sum_{t=1}^T \zb_{t,i} $ with $\zb_t\in \RR^d$.

Our work is closely related to OCO with complex functional constraints, e.g.,  \citet{mahdavi2012trading, jenatton2016adaptive, yu2017online, chen2017online, yuan2018online, liakopoulos2019cautious,yi2021regret,yu2020low}. \citet{mahdavi2012trading} proposed an primal-dual algorithm in Euclidean space that achieved an $\cO(\sqrt{T})$ regret and an $\cO(T^{3/4})$ constraint violation and further proposed a mirror-prox-type algorithm that obtained both $\cO(T^{2/3})$ regret and constraint violation. \citet{jenatton2016adaptive} obtained $\cO(T^{\max\{\beta, 1-\beta\}})$ regret and $\cO(T^{1-\beta/2})$ constraint violation through a primal-dual gradient-descent-type algorithm where $\beta \in (0, 1)$. \citet{yu2020low} improved this bound and obtained $\mathcal O(\sqrt T)$ regret and $\mathcal O (1)$ constraint violation. But it relied on solving a general Lagrangian minimization problem each round that is computationally inefficient. We remark that \citet{yu2020low} gave the best-known bound under the Slater's condition and our work uses the same setting. On the other hand, with a stricter constraint violation metric and without the Slater's condition, \citet{yuan2018online} obtained $\cO(T^{\max\{\beta, 1-\beta\}})$ regret and $\cO(T^{1-\beta/2})$ constraint violation and \citet{yi2021regret} further improved the bound to $\big(\cO(T^{\max\{\beta, 1-\beta\}}),\cO(T^{(1-\beta)/2})\big)$ and $\big(\cO(T^{\max\{\beta, 1-\beta\}}),\cO(\sqrt{T})\big)$ where $\beta \in (0, 1)$. 
In addition, \citet{yu2017online} and \citet{wei2020online} showed both $\mathcal O(\sqrt T)$ regret and constraint violation, but they studied the setting of stochastic constraints which subsumes the fixed constraint case as here. Note that \citet{wei2020online} studied constrained OCO in a general normed space other than Euclidean space as in other works.

Another line of work on OCO problems considered a different regret metric, namely the dynamic regret, which is distinct from the definition in \eqref{eq:def_regret} \citep{zinkevich2003online,hall2013dynamical,zhao2021proximal,yang2016tracking,zhao2020dynamic,zhang2018adaptive,zhang2017improved,zhang2018dynamic,baby2022optimal,cheng2020online,chang2021online,hazan2007adaptive,daniely2015strongly,besbes2015non,jadbabaie2015online,baby2021optimal,zhao2021improved,baby2021optimala,goel2019online,mokhtari2016online,zhao2022non,chen2019nonstationary,yi2021regret} . The dynamic regret is defined as $\Regret(T):=\sum_{t=1}^T f^t(\xb_t) - \sum_{t=1}^T f^t(\xb_t^*)$, where the comparators are the minimizers for each individual loss function, i.e., $\xb_t^* :=\argmin_{\xb\in \cX} f^t(\xb)$, instead of the minimizer for the summation of all losses over $T$ time slots as in our problem. Their regret bound is associated with the overall variation of the loss functions $\{f^t\}_{t=1}^T$ or the comparators $\{\xb_t^*\}_{t=1}^T$ in $T$ time steps. Thus, our setting and analysis are fundamentally different than the ones for the dynamic regret.

\section{Problem Setup} \label{sec:prob}

Consider an OCO problem with long-term complex constraints.  Suppose that the feasible set $\cX_0 \subset \RR^d$ is convex and compact, and that there are $K$ long-term fixed constraints $g_k(\xb)\leq 0$, $k\in [K]$, which comprise the set $\cX$ defined as \eqref{eq:def_X}. At each round $t$, after generating a decision $\xb_t$, the decision maker will observe a new loss function $f^t: \cX_0 \mapsto \RR$.\footnote{We use $k$ to index the constraints and $t$ to index the time step.} 
Our goal is to propose an efficient learning algorithm to generate a sequence of iterates $\{\xb_t\}_{t\geq 0}$ within $\cX_0$, such that the regret $\Regret(T)$ and constraint violation $\Violation(T, k)$, $k\in [K]$, defined in \eqref{eq:def_regret} and \eqref{eq:def_constr} grow sublinearly w.r.t.~the total number of rounds $T$. 

We denote $\gb(\xb)= [g_1 (\xb), g_2 (\xb), \ldots, g_K (\xb)]^\top$ as a vector constructed by stacking the values of the constraint functions at a point $\xb$. We let $\|\cdot\|$ be a norm, with $\|\cdot\|_*$ denoting its dual norm. We let $[n]$ denote the set $\{1, 2, \ldots, n\}$. We use $a\vee b$ to denote $\max\{a, b\}$. Then, for the constraints and loss functions, we make several common assumptions \citep{chiang2012online,hazan2016introduction,yu2020low}.

\begin{assumption} \label{assump:global} Assume that the set $\cX_0$, the functions $f^t$ and $g_k, \ k \in [K]$ satisfy the following assumptions:
\begin{enumerate}[label=\alph*)]
\item The set $\cX_0$ is convex and compact. 

\item The gradient of the loss function $f^t$ is bounded: $\|\nabla f^t(\xb)\|_* \leq F$, $\xb \in \cX_0$, $t \geq 0$. Moreover, $\nabla f^t$ is $L_f$-Lipschitz continuous: $\| \nabla f^t(\xb) - \nabla f^t(\yb) \|_* \leq L_f \|\xb-\yb\|,\ \xb, \yb \in \cX_0, \ t \geq 0$.

\item The constraint function $g_k$ is bounded: $\sum_{k=1}^K |g_k(\xb) | \leq  G,  \  \xb \in \cX_0$. Moreover, $g_k$ is $H_k$-Lipschitz continuous: $| g_k(\xb) - g_k(\yb) | \leq H_k \|\xb-\yb\|, \ \xb, \yb \in \cX_0$. We let $H := \sum_{k=1}^{K} H_k$. Furthermore, the gradient of the function $g_k$ is $L_g$-Lipschitz continuous: $\| \nabla g_k(\xb) - \nabla g_k(\yb) \|_* \leq L_g \|\xb-\yb\|, \ \xb, \yb \in \cX_0$.
\end{enumerate}
\end{assumption}
The boundedness of the gradient $\nabla f^t$ and the function $g_k$ can be induced by the Lipschitz continuity properties of them plus the compactness of $\cX_0$ when they are well-defined in $\cX_0$. We explicitly assume the boundedness of $\nabla f^t$ and $g_k$ in b) and c) with extra constants for clarity.

\begin{assumption}[Slater's Condition] \label{assump:slater} 
There exist a constant $\varsigma > 0$ and a point $\breve{\xb} \in  \cX_0$ such that $g_k(\breve{\xb}) \leq -\varsigma$ for any $ k \in [K]$.
\end{assumption}
This is a standard assumption used in constrained optimization \citep{boyd2004convex} and is also commonly adopted for constrained OCO problems \citep{yu2017simple,yu2017online,yu2020low}. It intuitively implies that the set formed by functional constraints $\gb_k(\xb)\leq 0, \ k\in [K],$ has a non-empty interior. 

We further define the Bregman divergence $D(\xb, \yb)$ for any $\xb, \yb \in \cX_0$. Let $\omega : \cX_0 \mapsto \RR$ be a strictly convex function, which is continuously differentiable in the interior of $\cX_0$. The Bregman divergence is defined as 
\begin{align*}
D(\xb, \yb)=\omega(\xb) - \omega(\yb) - \langle \nabla \omega(\yb), \xb-\yb \rangle.
\end{align*}
We call $\omega$ a distance generating function with modulus $\rho$ w.r.t.~the norm $\|\cdot\|$ if it is $\rho$-strongly convex, that is, $\langle \xb-\yb, \nabla \omega(\xb) - \nabla \omega(\yb) \rangle \geq \rho \|\xb-\yb\|^2.$
Then the Bregman divergence defined by the distance generating function $\omega$ satisfies 
$
D(\xb, \yb) \geq \frac{\rho}{2} \|\xb-\yb\|^2.
$
Some common examples for the Bregman divergence are listed below.
\begin{enumerate}[label=\alph*)]
\item  If $\omega(\xb) = \frac{1}{2}\|\xb\|_2^2$ then the Bregman divergence is related to the squared Euclidean distance on $\RR^d$, $D(\xb, \yb) = \frac{1}{2}\|\xb- \yb\|_2^2$ and $\rho = 1$.
\item If $\omega(\xb) = -\sum_{i=1}^d \xb_i \log \xb_i$ for any $\xb \in \cX_0$ with $\cX_0$ being a probability simplex 
\begin{align}
\hspace{-0.2cm}\Delta := \{\xb \in \RR^d : \|\xb\|_1 = 1 \text{ and } \xb_i \geq 0, \forall i \in [d]\}, \label{eq:prob_simplex}
\end{align}
then $D(\xb, \yb)= D_{\KL} (\xb, \yb):= \sum_{i=1}^d \xb_i \log (\xb_i/\yb_i)$ is the Kullback-Leibler (KL) divergence, where $\xb \in \Delta$ and $\yb \in \Delta^o:=\Delta \cap \mathrm{relint}(\Delta)$ with $\mathrm{relint}(\Delta)$ denoting the relative interior of $\Delta$. In this case, $\omega$ has a modulus $\rho = 1$ w.r.t.~$\ell_1$ norm $\|\cdot\|_1$ with the dual norm $\|\cdot\|_\infty$.
\end{enumerate}

\begin{algorithm}[t]\caption{Online Primal-Dual Mirror-Prox Algorithm } 
   \setstretch{1.2}
	\begin{algorithmic}[1]
		\STATE {\bfseries Initialize:} $\gamma > 0; \xb_0, \xb_1, \tilde{\xb}_1 \in \cX_0; Q_k (0) = 0, k\in[K]$ 
		\FOR{$t=1,\ldots,T$}   

		        \STATE Update the dual iterate $Q_k(t)$ via \eqref{eq:update_dual}.

				\STATE Update the primal iterate $\xb_t$ via \eqref{eq:update_primal}.
		        	
		        \STATE Play $\xb_t$.
		        \STATE Suffer loss $ f^t(\xb_t)$ and compute $\nabla f^t(\xb_t)$.

		        \STATE Update the intermediate iterate $\tilde{\xb}_{t+1}$ for the next round via \eqref{eq:update_intermediate}.

        \ENDFOR
	\end{algorithmic}\label{alg:ompd}
\end{algorithm}

\section{Algorithm}
 
We introduce the proposed online primal-dual mirror-prox algorithm in Algorithm \ref{alg:ompd}. At the $t$-th round, we let $Q_k(t)$ be the dual iterate updated based on the $k$-th constraint function $g_k(x)$, $k\in [K]$. The dual iterate is updated as 
\begin{align} \label{eq:update_dual}
\hspace{-0.2cm}Q_k(t) = \max \{ - \gamma g_k(\xb_{t-1}), ~Q_k(t-1) + \gamma g_k (\xb_{t-1}) \}. 
\end{align}
We denote $\Qb(t) = [Q_1(t), \ldots, Q_K(t)]^\top \in \RR^K$ as a dual variable vector, which can be viewed as a Lagrange multiplier vector whose entries are guaranteed to be non-negative by induction since $Q_k(0) = 0$ by initialization. From another perspective, $\Qb(t)$ is a virtual queue for backlogging constraint violations. The primal iterate $\xb_t$ is the decision made at each round by the decision maker. To obtain this iterate, we use an online mirror-prox-type updating rule with the constraint functions: 
\begin{itemize}

\item[1.] Incorporating the dual iterates $Q_k(t)$ and the gradient of constraints $\nabla g_k(\xb_{t-1})$, Line 4 runs a mirror descent step based on the last decision $\xb_{t-1}$ and an intermediate iterate $\tilde{\xb}_t$ generated in the last round, which is
\begin{align}
\begin{aligned}\label{eq:update_primal}
&\hspace{-0.3cm}\xb_t = \argmin_{\xb \in \cX_0}~ \langle \nabla f^{t-1}(\xb_{t-1}), \xb \rangle + \textstyle{\sum_{k=1}^K} [Q_k(t) \\
&\hspace{-0.3cm}\qquad  + \gamma g_k (\xb_{t-1}) ] \langle \gamma \nabla g_k (\xb_{t-1}), \xb \rangle + \alpha_t  D( \xb,  \tilde{\xb}_t ) . 
\end{aligned}
\end{align}	

\item[2.] After observing a new loss function $f^t$, Line 7 generates an intermediate iterate $\tilde{\xb}_{t+1}$ for the next round, which is 
\begin{align}
\begin{aligned} \label{eq:update_intermediate}
&\hspace{-0.3cm}\tilde{\xb}_{t+1} = \argmin_{\xb \in \cX_0}~ \langle \nabla f^t(\xb_t), \xb \rangle + \textstyle{\sum_{k=1}^K} [Q_k(t) \\
&\hspace{-0.3cm}\qquad+ \gamma g_k (\xb_{t-1}) ] \langle \gamma \nabla g_k (\xb_{t-1}), \xb \rangle + \alpha_t  D ( \xb, \tilde{\xb}_t ). 
\end{aligned}
\end{align}
\end{itemize}
The above proposed updating steps can yield a gradient-variation regret without sacrificing the constraint violation bound, as shown in the following sections.  Note that the primal iterates \eqref{eq:update_primal} and \eqref{eq:update_intermediate} are simple and can be cast as two standard mirror descent steps that incorporate the dual iterate. More specifically, letting $\mathbf{h}_{t-1} := \nabla f^{t-1}(\xb_{t-1}) + \gamma  \sum_{k=1}^K (Q_k(t) + \gamma g_k (\xb_{t-1}) )  \nabla g_k (\xb_{t-1})$, one can show that Line 4 can be rewritten as 
\begin{align*}
&\nabla\omega(\mathbf y_t) =  \nabla\omega(\widetilde{\mathbf x}_t) - \alpha_t^{-1}\mathbf{h}_{t-1}, \\
&\mathbf{x}_t = \argmin_{x\in\cX_0} D(\mathbf x, \mathbf y_t). 
\end{align*}
When choosing the Bregman divergence as $ D(\xb, \yb) = \frac{1}{2} \|\xb-\yb\|_2^2$, it further reduces to gradient descent based update with the Euclidean projection onto $\cX_0$, that is, 
\begin{align*}
\xb_t = \mathrm{Proj}_{\cX_0} \{ \tilde{\xb}_t - \alpha_t^{-1} \mathbf{h}_{t-1}\}.
\end{align*}

\begin{remark}\label{re:grad_up} 
The primal update of \citet{yu2020low} is a minimization problem that involves the exact constraint function $g_k(\xb)$, which can be any complex form. Therefore, its primal update cannot reduce to the projected gradient descent step if $g_k(\xb)$ is not linear, leading to a high computational cost. As opposed to \citet{yu2020low}, our proposed updates are simple as discussed above and only rely on a local linearization of the constraint function $g_k(\xb)$ by its gradient at $\xb_{t-1}$, which is $g_k(\xb_{t-1}) +\langle \nabla g_k(\xb_{t-1}), \xb-\xb_{t-1} \rangle$ (the constant term $g_k(\xb_{t-1}) -\langle \nabla g_k(\xb_{t-1}), \xb_{t-1} \rangle$ conditioned on $\xb_{t-1}$ does not affect updates and is ignored in Algorithm \ref{alg:ompd}). Thus, Algorithm~\ref{alg:ompd} enjoys the advantage of lower computational complexity. Moreover, our algorithm is designed for a more general normed space. 
\end{remark}

\section{Main Results}\label{sec:main}

We present the regret bound (Theorem \ref{thm:main_regret}) and the constraint violation bound (Theorem \ref{thm:main_constr}) for Algorithm~\ref{alg:ompd}.
We first make the following standard assumption on the boundedness of the Bregman divergence for Algorithm \ref{alg:ompd}.
\begin{assumption}\label{assump:bounded} 
There exists $R>0$ such that $D(\xb, \yb) \leq R^2$, $\forall \xb, \yb \in \cX_0$.
\end{assumption}
Assumption~\ref{assump:bounded} is sensible if the Bregman divergence is a well-defined distance on a compact set $\cX_0$, for example,  $\frac{1}{2}\|\xb-\yb\|_2^2$. Then Assumption~\ref{assump:bounded} implies that $ \|\xb - \yb\|^2 \leq 2 R^2 / \rho$, $\forall \xb, \yb \in \cX_0$, according to the relationship that $D(\xb, \yb) \geq \frac{\rho}{2}\|\xb- \yb\|_2^2$ discussed in Section \ref{sec:prob}.

Note that Assumption \ref{assump:bounded} does not hold when $D(\xb, \yb)$ is the KL divergence with $\cX_0=\Delta$ being a probability simplex. If $\yb$ is close to the boundary of the probability simplex, $D(\xb, \yb)$ can be arbitrarily large according to the definition of KL divergence so that $D(\xb, \yb)$ is not bounded. We analyze this probability simplex setting to complement our theory in Section \ref{sec:extension} with the presentation of Algorithm \ref{alg:ompd_simplex}.

\begin{theorem} [Regret]\label{thm:main_regret} Under Assumptions \ref{assump:global}, \ref{assump:slater}, and \ref{assump:bounded}, setting $\eta = [ V_*(T) +  L_f^2 + 1 ]^{-1/2}$, $ \gamma = [ V_*(T) + L_f^2 + 1 ]^{1/4}$, and 
$\alpha_t = \max \big\{ 2\rho^{-1} (\gamma^2 L_g G + \eta L_f^2 + \eta^{-1}  + \xi_t), \alpha_{t-1} \big\}$ with initializing $\alpha_0 = 0$ and defining
\begin{align} \label{eq:def_xi}
\xi_t :=   \gamma L_g  \|\Qb(t)\|_1 + \gamma^2 (L_g G + H^2),
\end{align}
then Algorithm \ref{alg:ompd} ensures the following regret 
\begin{align*}
\Regret(T) &\leq  \cO\Big((1 + \varsigma^{-1} ) \sqrt{V_*(T)+ L_f^2+1}  + \varsigma^{-1} \Big)\\
& = \cO\Big(\sqrt{V_*(T)}\vee L_f\Big),
\end{align*}
where $\cO$ hides constants $\textnormal{\texttt{poly}}(R, H, F, G, L_g, K, \rho)$. \footnote{Hereafter, we use $\texttt{poly}(\cdots)$ to denote a polynomial term composed of the variables inside the parentheses.}
\end{theorem}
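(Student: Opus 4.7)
The plan is to establish a per-round drift-plus-penalty inequality that simultaneously controls the instantaneous regret $f^t(\xb_t) - f^t(\xb^*)$ and the growth of the virtual queue $\Qb(t)$, then telescope over $t$ and invoke Slater's condition to obtain a uniform bound on $\|\Qb(t)\|_1$ — the latter being essential because $\alpha_t$ itself depends on $\|\Qb(t)\|_1$ through $\xi_t$.

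I would begin by writing the first-order optimality conditions for the two mirror-descent subproblems in Lines~\ref{line:primal} and~\ref{line:intermediate}, each giving a three-point inequality of the form $\langle \mathbf{h}, z-\xb\rangle \le \alpha_t[D(\xb,\tilde{\xb}_t) - D(\xb,z) - D(z,\tilde{\xb}_t)]$ for all $\xb\in\cX_0$, with $z=\xb_t$ carrying the stale gradient $\nabla f^{t-1}(\xb_{t-1})$ and $z=\tilde{\xb}_{t+1}$ carrying the fresh gradient $\nabla f^t(\xb_t)$. Instantiating the first at $\xb=\tilde{\xb}_{t+1}$ and the second at $\xb=\xb^*$, adding them, and using convexity of $f^t$ and the $g_k$'s produces
\begin{align*}
f^t(\xb_t) - f^t(\xb^*) + \gamma \textstyle\sum_k [Q_k(t)+\gamma g_k(\xb_{t-1})][g_k(\xb_t)-g_k(\xb^*)] \le \alpha_t[D(\xb^*,\tilde{\xb}_t) - D(\xb^*,\tilde{\xb}_{t+1})] + \mathcal{E}_t,
\end{align*}
where $\mathcal{E}_t$ collects the gradient-mismatch term $\langle \nabla f^t(\xb_t) - \nabla f^{t-1}(\xb_{t-1}), \xb_t - \tilde{\xb}_{t+1}\rangle$ and the $\gamma^2$-order cross terms involving $\nabla g_k$, offset by the negative Bregman pieces $-\alpha_t D(\xb_t,\tilde{\xb}_t) - \alpha_t D(\tilde{\xb}_{t+1},\xb_t)$ kept from the three-point inequalities.

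Next, I would split $\nabla f^t(\xb_t) - \nabla f^{t-1}(\xb_{t-1}) = [\nabla f^t(\xb_t) - \nabla f^{t-1}(\xb_t)] + [\nabla f^{t-1}(\xb_t) - \nabla f^{t-1}(\xb_{t-1})]$, bounding the first in dual norm by the summand of $V_*(T)$ and the second by $L_f\|\xb_t-\xb_{t-1}\|$, then applying Young's inequality with parameter $\eta$. The resulting quadratic terms $\|\xb_t-\tilde{\xb}_{t+1}\|^2$ and $\|\xb_t-\xb_{t-1}\|^2$ are absorbed by $\tfrac{\alpha_t\rho}{2}\|\cdot\|^2$ from the negative Bregman pieces (via $\rho$-strong convexity of $\omega$), and the analogous absorption for the $\gamma^2 g_k$-correction terms — handled via Assumption~\ref{assump:global}(c)(d) — is precisely what forces $\alpha_t \ge \tfrac{2}{\rho}(\gamma^2 L_g G + \eta L_f^2 + \eta^{-1} + \xi_t)$. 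For the constraint side, the virtual-queue recursion yields a drift bound $\|\Qb(t+1)\|_2^2 - \|\Qb(t)\|_2^2 \le 2\gamma\langle \Qb(t)+\gamma\gb(\xb_{t-1}),\gb(\xb_t)\rangle + \textnormal{const}$; added to the previous display, the dual-weighted inner products merge, $g_k(\xb^*)\le 0$ is dropped, and telescoping $t=1,\dots,T$ gives
\begin{align*}
\Regret(T) + \tfrac{1}{2\gamma}\|\Qb(T+1)\|_2^2 \le \alpha_T R^2 + \eta\, V_*(T) + \textnormal{lower order},
\end{align*}
using monotonicity of $\alpha_t$ and $D(\xb^*,\tilde{\xb}_1)\le R^2$ from Assumption~\ref{assump:bounded}.

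The main obstacle is that $\alpha_T$ depends on $\|\Qb(T)\|_1$ through $\xi_T$, so the display is circular unless the queue is controlled independently. I would break the circularity with a Lyapunov drift lemma using Slater's point $\check{\xb}$: instantiating the per-round inequality at $\xb=\check{\xb}$ and using $g_k(\check{\xb})\le -\varsigma$ converts the queue-weighted sum into a strongly negative term $-\varsigma\gamma\|\Qb(t)\|_1$, which dominates whenever $\|\Qb(t)\|_1$ is large and therefore forces a uniform $\mathcal{O}(1/\varsigma)$ bound on $\|\Qb(t)\|_1$ — and hence on $\xi_t$ and $\alpha_t$. Plugging in $\gamma = [\max\{V_*(T),L_f^2\}]^{1/4}$ and $\eta = [\max\{V_*(T),L_f^2\}]^{-1/2}$, the dominant contribution $\alpha_T R^2$ scales like $\max\{\sqrt{V_*(T)},L_f\}$ (with a $1/\varsigma$ factor coming from the queue part of $\xi_T$), while $\eta V_*(T) \le \sqrt{V_*(T)}$, delivering the advertised $\mathcal{O}(\max\{\sqrt{V_*(T)},L_f\})$ regret with the stated constants $C_1$, $C_2/\varsigma$, and $C_3/\varsigma$.
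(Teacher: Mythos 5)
Your overall route is the paper's: a per-round drift-plus-penalty inequality obtained from the two three-point (pushback) inequalities for Lines~\ref{line:primal} and~\ref{line:intermediate}, the same splitting of the gradient mismatch into a variation part and an $L_f\|\xb_t-\xb_{t-1}\|$ part absorbed via the $\rho$-strong convexity of $\omega$ through the choice of $\alpha_t$, telescoping at $\zb=\xb^*$ with $g_k(\xb^*)\le 0$ dropped, and a Slater-based Lyapunov drift bound on the queue to break the circular dependence of $\alpha_t$ on $\|\Qb(t)\|_1$ (this is exactly the chain Lemma~\ref{lem:dpp_bound} $\to$ Lemma~\ref{lem:dpp_bound_new} $\to$ Lemmas~\ref{lem:Q_bound} and~\ref{lem:regret_al} in the paper).

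The one step that would fail as you describe it is the drift lemma. You claim that instantiating the per-round inequality at $\check{\xb}$ produces a term $-\varsigma\gamma\|\Qb(t)\|_1$ that ``dominates whenever $\|\Qb(t)\|_1$ is large.'' But the additive remainder on the right-hand side of that same inequality is not $\cO(1)$ in the queue: the boundary term $U_t$ contains $\xi_t\|\xb_{t-1}-\tilde{\xb}_t\|^2 \le (2R^2/\rho)\,\gamma L_g\|\Qb(t)\|_1 + \cdots$, which also grows \emph{linearly} in $\|\Qb(t)\|_1$ with coefficient $(2R^2/\rho)\gamma L_g$. A one-step drift therefore dominates only when $\varsigma > 2L_gR^2/\rho$, which is not assumed. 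The paper resolves this by summing the drift over a window of $\delta$ consecutive slots with $\delta \ge 4\sqrt{K}L_gR^2/(\rho\varsigma)$, so that the accumulated negative drift $-\varsigma\gamma\delta\|\Qb(t)\|_1$ beats the single boundary contribution (see \eqref{eq:Q_delta_diff}, \eqref{eq:delta_cond1}, \eqref{eq:delta_cond2}), and then runs a window-by-window contradiction argument. A second, related point your sketch omits: even after the queue is bounded in terms of $\alpha_{T+1}$, substituting that bound into the definition of $\alpha_{T+1}$ makes $\alpha_{T+1}$ appear on both sides ($\alpha_{T+1}\le \overline{C}' + \tfrac{2\sqrt{K}L_gR^2}{\rho\varsigma\delta}\alpha_{T+1}$ in \eqref{eq:alpha_ineq}), and the same choice of $\delta$ is what drives the self-referential coefficient below $1/2$ so the fixed point can be solved. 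Neither issue changes your architecture, but both must be handled for the argument to close.
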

The setting of $\alpha_t$ guarantees that $\{\alpha_t\}_{t\geq 0}$ is a non-decreasing sequence such that $\alpha_{t+1} \geq \alpha_t$. Note that this setting is sensible since it in fact implies a non-increasing step size $\alpha_t^{-1}$. For a clear understanding, consider a simple example: if $D(\xb, \yb) = \frac{1}{2}\|\xb-\yb\|_2^2$ and all $g_k(\xb)\equiv 0, \ \xb \in \cX_0$ such that we have an ordinary constrained online optimization problem, then \eqref{eq:update_primal} becomes $\xb_t = \mathrm{Proj}_{\cX_0} [ \tilde{\xb}_t - \alpha_t^{-1} \nabla f^{t-1}(\xb_{t-1})   ]$ with $\alpha_t^{-1}$ being a non-increasing step size.

\begin{theorem} [Constraint Violation]\label{thm:main_constr} Under Assumptions \ref{assump:global}, \ref{assump:slater}, and \ref{assump:bounded}, with the same settings of $\eta$, $\gamma $, and $\alpha_t$ as in Theorem \ref{thm:main_regret}, Algorithm \ref{alg:ompd} ensures the following constraint violation
\begin{align*}
\Violation(T, k) \leq \cO(1 + \varsigma^{-1}) = \cO(1), ~~~\forall k \in [K],
\end{align*}
where $\cO$ hides the constant factor $\textnormal{\texttt{poly}}(R, H, F, G, L_f, \allowbreak L_g, K, \rho)$.
\end{theorem}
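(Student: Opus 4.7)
The plan is to derive the constraint violation bound as a direct consequence of a uniform $\mathcal{O}(\gamma)$ bound on the virtual queue $\|\Qb(t)\|_\infty$. From the dual update in Line \ref{line:dual}, I get $Q_k(t+1) \geq Q_k(t) + \gamma g_k(\xb_t)$, so telescoping and using $Q_k(0)=0$ yields
\begin{align*}
\sum_{t=1}^T g_k(\xb_t) \leq \frac{Q_k(T+1)}{\gamma} + |g_k(\xb_0)|.
\end{align*}
Since $|g_k(\xb_0)|\leq G$ and $\gamma = \Theta(\max\{V_*(T), L_f^2\}^{1/4})$, the claimed $\mathcal{O}(1)$ violation reduces to showing $Q_k(T+1) = \mathcal{O}(\gamma)$, uniformly in $k$ and $t$.

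To bound the queue, I would perform a drift-plus-penalty argument on the Lyapunov function $\Phi(t) := \tfrac{1}{2}\|\Qb(t)\|_2^2$. Squaring the $\max$-form dual update and using $\max\{a,b\}^2 \leq a^2+b^2$ gives a one-step drift of the shape
\begin{align*}
\Phi(t+1) - \Phi(t) \leq \gamma \langle \Qb(t+1), \gb(\xb_t)\rangle + \gamma^2 \|\gb(\xb_t)\|_2^2.
\end{align*}
The crucial step is to dominate the inner product on the right using the first-order optimality of $\xb_t$ in Line \ref{line:primal} with the Slater point $\check\xb$ as comparator. Writing $\mathbf{h}_{t-1}=\nabla f^{t-1}(\xb_{t-1})+\gamma\sum_k [Q_k(t)+\gamma g_k(\xb_{t-1})]\nabla g_k(\xb_{t-1})$ and applying the three-point Bregman identity, I get $\langle \mathbf{h}_{t-1}, \xb_t-\check\xb\rangle \leq \alpha_t[D(\check\xb,\tilde\xb_t)-D(\check\xb,\xb_t)-D(\xb_t,\tilde\xb_t)]$. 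Splitting $\langle \nabla g_k(\xb_{t-1}), \xb_t-\check\xb\rangle = \langle \nabla g_k(\xb_{t-1}), \xb_t-\xb_{t-1}\rangle + \langle \nabla g_k(\xb_{t-1}), \xb_{t-1}-\check\xb\rangle$, convexity of $g_k$ together with $g_k(\check\xb)\leq -\varsigma$ yields $\langle \nabla g_k(\xb_{t-1}), \xb_{t-1}-\check\xb\rangle \geq g_k(\xb_{t-1})+\varsigma$, producing a negative ``Slater drift'' proportional to $-\gamma\varsigma\|\Qb(t)\|_1$; the remaining piece is absorbed using Lipschitz continuity of $\nabla g_k$ (Assumption \ref{assump:global}(d)) and Young's inequality against the strongly-convex term $-\tfrac{\alpha_t\rho}{2}\|\xb_t-\tilde\xb_t\|^2$.

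Combining these ingredients, I expect a drift inequality of the form $\Phi(t+1)-\Phi(t) \leq -\gamma\varsigma\,\|\Qb(t+1)\|_1 + B$ whenever $\|\Qb(t+1)\|_1$ exceeds some threshold $B'$, where both $B$ and $B'$ scale as $\mathrm{poly}(\gamma)$ built from the constants of Assumption \ref{assump:global}. A standard inductive argument on $t$ (cf.\ the queue-boundedness lemmas in \citet{yu2017simple,yu2020low}) then propagates this into a uniform bound $\|\Qb(t)\|_\infty \leq C\gamma/\varsigma$ for all $t\leq T+1$, with $C = \mathrm{poly}(R,H,F,G,L_f,L_g,K,\rho)$: whenever the queue attempts to exceed the threshold, the negative Slater drift forces it back. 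Dividing by $\gamma$ converts this into the advertised $C_1' + C_2'/\varsigma$ violation bound. The main obstacle will be the drift bookkeeping, in particular verifying that all $\gamma$-dependent cross terms (the $\gamma^2 g_k(\xb_{t-1})\nabla g_k(\xb_{t-1})$ contribution inside $\mathbf{h}_{t-1}$, together with the $\gamma^2\|\gb\|_2^2$ penalty term) remain dominated by the Slater drift $-\gamma\varsigma\|\Qb\|_1$. This is precisely where the prescribed scalings of $\eta$, $\gamma$, and the non-decreasing choice of $\alpha_t$ from Theorem \ref{thm:main_regret} become essential.
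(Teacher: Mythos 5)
Your overall architecture matches the paper's: Lemma \ref{lem:constr_al} is exactly your telescoping step, $\Violation(T,k)\le\frac{1}{\gamma}\|\Qb(T+1)\|_2$, and the paper likewise reduces everything to a Lyapunov-drift bound on the queue that exploits the Slater point as comparator in the primal optimality condition (Lemmas \ref{lem:dpp_bound}--\ref{lem:Q_bound}). The problem is in the queue-bounding step, where you posit a \emph{one-step} drift inequality $\Phi(t+1)-\Phi(t)\le-\gamma\varsigma\|\Qb(t+1)\|_1+B$ with $B=\mathrm{poly}(\gamma)$ independent of the queue. That inequality does not hold. After applying the three-point identity with $\check\xb$ and absorbing the cross terms, the additive remainder necessarily contains contributions proportional to $\|\Qb(t)\|_1$ with coefficient of order $\gamma L_g R^2/\rho$: one from the quadratic term $\xi_t\|\xb_{t-1}-\tilde\xb_t\|^2$ (since $\xi_t=\gamma L_g\|\Qb(t)\|_1+\cdots$) and one from $\alpha_t D(\check\xb,\tilde\xb_t)$ (since $\alpha_t$ itself contains $\frac{2\gamma L_g}{\rho}\max_{t'\le t}\|\Qb(t')\|_1$ by construction). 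A single step of Slater drift only supplies $-\varsigma\gamma\|\Qb(t)\|_1$, and $\varsigma$ can be arbitrarily smaller than $L_gR^2/\rho$, so the negative drift does not dominate and the "standard inductive argument" has nothing to induct on. The paper's fix is to run the drift over a window of $\delta$ slots with $\delta=4\sqrt{K}L_gR^2/(\rho\varsigma)$ (see \eqref{eq:Q_delta_diff}--\eqref{eq:delta_cond2}): the telescoped $U_t-U_{t+\delta}$ pays the $\|\Qb(t)\|_1$-proportional cost only once at the endpoint, while the Slater drift accumulates to $-\varsigma\gamma\delta\|\Qb(t)\|_1$, which dominates precisely when $\varsigma\delta\ge 2L_gR^2/\rho$. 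This $\Theta(1/\varsigma)$ window is a genuine missing idea, not bookkeeping.

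A second, related gap: your "constant" $B$ also hides $\alpha_{T+1}R^2$, and $\alpha_{T+1}$ depends on $\max_{t}\|\Qb(t)\|_1$ -- the very quantity you are trying to bound. The paper resolves this circularity by substituting the queue bound (which contains $\alpha_{T+1}$) back into the definition of $\alpha_{T+1}$ and solving the resulting fixed-point inequality $\alpha_{T+1}\le\overline{C}'+\frac{2\sqrt{K}L_gR^2}{\rho\varsigma\delta}\alpha_{T+1}$, which is where the second constraint $\delta\ge 4\sqrt{K}L_gR^2/(\rho\varsigma)$ comes from (see \eqref{eq:alpha_ineq}--\eqref{eq:alpha_bound}). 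Your proposal does not acknowledge this self-reference. Both issues are fixable, and your reduction of the theorem to $\|\Qb(T+1)\|_2=\mathcal{O}(\gamma/\varsigma)$ is the right target, but as written the central drift argument would not close.
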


Theorem \ref{thm:main_regret} and Theorem \ref{thm:main_constr} can be interpreted as follows: 
\begin{itemize}
\item[1.] Regret is bounded by $\cO(\sqrt{V_*(T)}\vee L_f )$, which can explicitly reveal the dependence of the regret on the gradient variation $V_*(T)$. Since $V_*(T) = \sum_{t=1}^T \max_{\xb\in \cX_0} \allowbreak\|\nabla f^t(\xb) - \nabla f^{t-1}(\xb)\|_*^2 \leq 2FT$, then $\cO(\sqrt{V_*(T)})$ reduces to $\cO(\sqrt{T})$ in the worst case. This result indicates that we can achieve the gradient-variation bound for the constrained online convex optimization via a first-order method. Meanwhile, our constraint violation bound remains $\cO(1)$ as in \citet{yu2020low}. 

\item[2.] When $f^1 = f^2 = \cdots =  f^T$ such that $V_*(T)=0$, we have $\cO(1)$ regret and constraint violation, which is equivalent to the $\cO(1/T)$ convergence rate (measured by $\Regret(T)/T$) of solving a smooth convex optimization, matching the result for mirror-prox algorithms \citep{bubeck2015convex}. Our result also improves upon previous attempts using mirror-prox-type algorithms for OCO with complex constraints, which achieves a worse $\mathcal{O}(T^{2/3})$ regret and constraint violation \citep{mahdavi2012trading}.

\item[3.] Moreover, our theorems hold in the general normed space, which covers Euclidean space as a special case. 
Compared to \citet{wei2020online} also for OCO in the general normed space but with stochastic constraints, our gradient-variation regret reduces to their $\cO(\sqrt{T})$ regret in the worst case and our constraint violation bound $\cO(1)$ improves over their $\cO(\sqrt{T})$ result. The improvement in constraint violation results from exploiting the long-term fixed constraints in our setting rather than their stochastic constraints. Our work bridges this theoretical gap for the constrained OCO in the general space.
\end{itemize}

\begin{remark}
We set the hyperparameters by using the gradient-variation $V_*(T)$ following the existing work for the gradient-variation regret \citep{chiang2012online, yang2014regret}. One potential research direction is to design an adaptive algorithm without using $V_*(T)$ for setting hyperparameters. We leave the problem of designing such adaptive algorithms for OCO as our future work. 
\end{remark}

\section{Theoretical Analysis}  \label{sec:theory}

Our analysis starts from the \emph{drift-plus-penalty} expression with the drift-plus-penalty term defined as
\begin{align}
\begin{aligned}\label{eq:dpp}
\textsc{Dpp}(t) &:= \underbrace{\frac{1}{2}[\|\Qb(t+1)\|_2^2 -\| \Qb(t) \|_2^2]}_{\text{drift}} \\ 
&\quad + \underbrace{\langle \nabla f^{t-1}(\xb_{t-1}), \xb \rangle + \alpha_t  D( \xb,  \tilde{\xb}_t )}_{\text{penalty}}. 
\end{aligned}
\end{align}
The drift term shows the one-step change of the vector $\Qb(t)$ which is the backlog queue of the constraint functions. The penalty term is associated with a mirror descent step when observing the gradient of the loss function $f^{t-1}$. The drift-plus-penalty expression is investigated in recent papers on constrained online optimization problems \citep{yu2017simple,yu2017online,wei2020online,yu2020low}. However, the techniques in our analysis for this expression are different. Our theoretical analysis makes a step toward understanding the drift-plus-penalty expression under mirror-prox-type algorithms.
We develop a novel upper bound of the drift-plus-penalty term for the constraint online optimization problem at the point $\xb = \xb_t$ in the following lemma. 
\begin{lemma}\label{lem:dpp_bound} 
At the $t$-th round of Algorithm \ref{alg:ompd}, for any $\gamma > 0$ and any $\zb \in \cX_0$, letting $\xi_t$ as in \eqref{eq:def_xi}, the drift-plus-penalty term admits the following bound 
\begin{align*}
\textsc{Dpp}(t) & \leq \frac{\xi_t}{2}  \|\xb_t - \xb_{t-1}\|^2 +   \frac{\gamma^2 }{2}
\big[ \|\gb(\xb_t)\|_2^2 - \|\gb(\xb_{t-1})\|_2^2\big] \\
&  + \alpha_t D(\zb, \tilde{\xb}_t)   - \alpha_t D(\zb, \tilde{\xb}_{t+1})  - \alpha_t D(\tilde{\xb}_{t+1}, \xb_t) \\
&  + \langle \nabla f^{t-1}(\xb_{t-1}) - \nabla f^t(\xb_t), \tilde{\xb}_{t+1} \rangle + \langle \nabla f^t(\xb_t), \zb \rangle\\
& + \gamma  \langle \Qb(t) + \gamma \gb(\xb_{t-1}), \gb(\zb) \rangle.
\end{align*}
\end{lemma}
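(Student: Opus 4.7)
The plan is to decompose the left-hand side into two pieces, bound each separately, and then combine them. The first piece is the drift $\tfrac12[\|\Qb(t+1)\|_2^2-\|\Qb(t)\|_2^2]$, which will be bounded using the virtual queue dynamics. The second is the penalty $\langle\nabla f^{t-1}(\xb_{t-1}),\xb_t\rangle+\alpha_t D(\xb_t,\tilde{\xb}_t)$, which will be bounded using the optimality conditions (three-point Bregman identity) of the two mirror descent steps in Lines \ref{line:primal} and \ref{line:intermediate}. A running piece of bookkeeping is the abbreviation $\mathbf u_t:=\Qb(t)+\gamma\gb(\xb_{t-1})$; a short induction from $Q_k(0)=0$ and the queue update shows that $u_{t,k}\ge 0$ for every $t,k$, which will be the sign-giving fact in Step~3.

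First, from $Q_k(t+1)=\max\{-\gamma g_k(\xb_t),Q_k(t)+\gamma g_k(\xb_t)\}$ a case split yields $\tfrac12[Q_k(t+1)^2-Q_k(t)^2]\le\gamma g_k(\xb_t)Q_k(t)+\gamma^2 g_k(\xb_t)^2$, so after summing over $k$ I rewrite $\gamma\langle\gb(\xb_t),\Qb(t)\rangle=\gamma\langle\gb(\xb_t),\mathbf u_t\rangle-\gamma^2\langle\gb(\xb_t),\gb(\xb_{t-1})\rangle$ and use the identity $\langle a,a-b\rangle=\tfrac12(\|a\|_2^2-\|b\|_2^2+\|a-b\|_2^2)$ with the $H$-Lipschitz bound $\|\gb(\xb_t)-\gb(\xb_{t-1})\|_2^2\le H^2\|\xb_t-\xb_{t-1}\|^2$ to obtain
\begin{align*}
\tfrac12[\|\Qb(t+1)\|_2^2-\|\Qb(t)\|_2^2]\le \gamma\langle\gb(\xb_t),\mathbf u_t\rangle+\tfrac{\gamma^2}{2}\big[\|\gb(\xb_t)\|_2^2-\|\gb(\xb_{t-1})\|_2^2\big]+\tfrac{\gamma^2 H^2}{2}\|\xb_t-\xb_{t-1}\|^2.
\end{align*}
This already contributes the $\tfrac{\gamma^2}{2}[\cdots]$ term of the target and one piece of $\tfrac{\xi_t}{2}\|\xb_t-\xb_{t-1}\|^2$.

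Next, both Line \ref{line:primal} and Line \ref{line:intermediate} are linear proximal problems of the form $\argmin_{\xb\in\cX_0}\langle a,\xb\rangle+\alpha_t D(\xb,\tilde{\xb}_t)$, so each obeys the three-point inequality $\langle a,z^\star-u\rangle\le\alpha_t[D(u,\tilde{\xb}_t)-D(u,z^\star)-D(z^\star,\tilde{\xb}_t)]$. I apply it to Line \ref{line:primal} with $u=\tilde{\xb}_{t+1}$ and to Line \ref{line:intermediate} with $u=\zb$ and add: the $\pm\alpha_t D(\tilde{\xb}_{t+1},\tilde{\xb}_t)$ terms cancel, the $\nabla g_k$ parts collapse into $\gamma\sum_k u_{t,k}\langle\nabla g_k(\xb_{t-1}),\xb_t-\zb\rangle$, and the gradient-of-$f$ parts telescope on $\tilde{\xb}_{t+1}$ to produce $\langle\nabla f^{t-1}(\xb_{t-1})-\nabla f^t(\xb_t),\tilde{\xb}_{t+1}\rangle$. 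Rearranging gives
\begin{align*}
\langle\nabla f^{t-1}(\xb_{t-1}),\xb_t\rangle+\alpha_t D(\xb_t,\tilde{\xb}_t) &\le \langle\nabla f^{t-1}(\xb_{t-1})-\nabla f^t(\xb_t),\tilde{\xb}_{t+1}\rangle+\langle\nabla f^t(\xb_t),\zb\rangle \\
&\quad +\gamma\textstyle\sum_k u_{t,k}\langle\nabla g_k(\xb_{t-1}),\zb-\xb_t\rangle +\alpha_t[D(\zb,\tilde{\xb}_t)-D(\zb,\tilde{\xb}_{t+1})-D(\tilde{\xb}_{t+1},\xb_t)].
\end{align*}

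Finally, adding the drift bound to the penalty bound pairs the term $\gamma\langle\gb(\xb_t),\mathbf u_t\rangle$ with $\gamma\sum_k u_{t,k}\langle\nabla g_k(\xb_{t-1}),\zb-\xb_t\rangle$ into $\gamma\sum_k u_{t,k}\big[g_k(\xb_t)+\langle\nabla g_k(\xb_{t-1}),\zb-\xb_t\rangle\big]$. I split the bracket as $\big[\langle\nabla g_k(\xb_{t-1}),\zb-\xb_{t-1}\rangle\big]+\big[g_k(\xb_t)-g_k(\xb_{t-1})-\langle\nabla g_k(\xb_{t-1}),\xb_t-\xb_{t-1}\rangle\big]$; because $u_{t,k}\ge 0$, convexity of $g_k$ bounds the first bracket by $g_k(\zb)-g_k(\xb_{t-1})$ (producing $g_k(\zb)$ after combination with the $g_k(\xb_{t-1})$ inside $\mathbf u_t$) and $L_g$-smoothness bounds the second by $\tfrac{L_g}{2}\|\xb_t-\xb_{t-1}\|^2$. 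Summing these smoothness residuals and using Assumption \ref{assump:global}(c) via $\sum_k u_{t,k}\le\|\Qb(t)\|_1+\gamma G$ yields an additional $\tfrac{\gamma L_g(\|\Qb(t)\|_1+\gamma G)}{2}\|\xb_t-\xb_{t-1}\|^2$; combined with $\tfrac{\gamma^2 H^2}{2}\|\xb_t-\xb_{t-1}\|^2$ from Step~1 this is exactly $\tfrac{\xi_t}{2}\|\xb_t-\xb_{t-1}\|^2$ in the notation of \eqref{eq:def_xi}, and the remaining terms assemble into the claimed inequality.

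The main obstacle is the bookkeeping in Step~3: one must align the drift's natural object $\Qb(t)$ with the mirror-prox dual multiplier $\mathbf u_t=\Qb(t)+\gamma\gb(\xb_{t-1})$ so that a single, clean convexity/smoothness argument on $g_k$ produces simultaneously the inner product $\gamma\langle\mathbf u_t,\gb(\zb)\rangle$ on the right-hand side and the exact coefficient $\xi_t$ in front of $\|\xb_t-\xb_{t-1}\|^2$. The linchpin enabling both inequalities to point in the right direction is the induction establishing $u_{t,k}\ge 0$.
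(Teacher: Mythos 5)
Your proposal is correct and follows essentially the same route as the paper's proof: bound the drift via the queue update, apply the three-point (pushback) inequality to each of the two mirror descent steps and add so that the $D(\tilde{\xb}_{t+1},\tilde{\xb}_t)$ terms cancel, and then use nonnegativity of $Q_k(t)+\gamma g_k(\xb_{t-1})$ together with convexity and $L_g$-smoothness of $g_k$ plus $H_k$-Lipschitzness to produce $\gamma\langle\Qb(t)+\gamma\gb(\xb_{t-1}),\gb(\zb)\rangle$ and the coefficient $\xi_t/2$. The only differences are cosmetic bookkeeping (you absorb the $\gamma^2$ cross-term into the drift step rather than handling it after combining, and your parenthetical attributing the cancelling $g_k(\xb_{t-1})$ to ``inside $\mathbf u_t$'' should really refer to the leftover constant from your bracket split), and the arithmetic nets out correctly.
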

See Appendix \ref{sec:proof_dpp_bound} for the proof. We use this lemma to obtain Lemma \ref{lem:dpp_bound_new}, Lemma \ref{lem:Q_bound}, and Lemma \ref{lem:regret_al}. 

Lemma \ref{lem:dpp_bound} is proved by utilizing the dual update in \eqref{eq:update_dual} and the two mirror descent steps coupled with the dual iterates and the constraints in \eqref{eq:update_primal} and \eqref{eq:update_intermediate}. As shown in Lemma \ref{lem:dpp_bound}, the upper bound of $\textsc{Dpp}(t)$ contains the one-step gradient-variation term $\nabla f^{t-1}(\xb_{t-1}) - \nabla f^t(\xb_t)$, which will be further used to build an important connection with the gradient variation $V^*(T)$. It indicates that our proposed algorithm can reveal the gradient variation in the drift-plus-penalty term in contrast to the prior work where this term does not exist. Moreover, it also has the difference terms for $\|\gb(\xb_t)\|_2^2$ and $D(\zb, \tilde{\xb}_t)$, with which we will construct a telescoping summation later.  The last term $\gamma  \langle \Qb(t) + \gamma \gb(\xb_{t-1}), \gb(\zb) \rangle$ together with Slater's condition and the dual updating rule is further utilized to bound the dual variable drift $\|\Qb(T+1)\|_2$ and regret $\Regret(T)$ when setting $\zb$ to be $\breve{\xb}$ or $\xb^*$ in Lemma \ref{lem:Q_bound} and Lemma \ref{lem:regret_al}.

\subsection{Proof Sketches}
We give proof sketches of the main theorems in Section~\ref{sec:main}. Within this subsection, all the lemmas and the proofs are under Assumptions \ref{assump:global}, \ref{assump:slater}, and \ref{assump:bounded}. 
\begin{lemma}\label{lem:dpp_bound_new} At the $t$-th round of Algorithm \ref{alg:ompd}, for any $\eta, \gamma > 0$ and any $\zb \in \cX_0$, setting $\alpha_t$ as in Theorem \ref{thm:main_regret}, the following inequality holds 
\begin{align}
&\frac{1}{2}\big[\|\Qb(t+1)\|_2^2 -\| \Qb(t) \|_2^2\big] + \langle \nabla f^t(\xb_t), \xb_t - \zb \rangle \nonumber\\
&\leq  U_t - U_{t+1} + \frac{\eta}{2}\| \nabla f^{t-1}(\xb_t) - \nabla f^t(\xb_t)\|^2_*  \label{eq:dpp_bound_new}\\
&~~  +  (\alpha_{t+1} - \alpha_t) D(\zb, \tilde{\xb}_{t+1}) + \gamma  \langle \Qb(t) + \gamma \gb(\xb_{t-1}), \gb(\zb) \rangle, \nonumber
\end{align}
where we define the term $U_t := (\xi_t+ \eta L_f^2 ) \|\xb_{t-1} - \tilde{\xb}_t\|^2 + \alpha_t D(\zb, \tilde{\xb}_t) - \gamma^2/2\cdot\|\gb(\xb_{t-1})\|_2^2$.
\end{lemma}
See Appendix \ref{sec:proof_dpp_bound_new} for the proof. Lemma~\ref{lem:dpp_bound_new} is obtained by rearranging the upper bound of $\textsc{Dpp}(t)$ in Lemma \ref{lem:dpp_bound} and properly setting the step size $\alpha_t$ such that the redundant terms, for example, $\|\xb_t - \xb_{t-1}\|^2$, are eliminated. Moreover, we now explicitly express the one-step gradient variation $\| \nabla f^{t-1}(\xb_t) - \nabla f^t(\xb_t)\|^2_*$ in the upper bound. The difference term for $U_t$ can lead to a telescoping summation when taking summation over $T$ slots on both sides of \eqref{eq:dpp_bound_new}. 
We can also use Lemma \ref{lem:dpp_bound_new} 
to derive the drift bound of $\|\Qb(t)\|_2$ in Lemma \ref{lem:Q_bound}  and the regret bound in Lemma \ref{lem:regret_al} since \eqref{eq:dpp_bound_new} contains the difference term for $\|\Qb(t)\|_2$ and also $\langle \nabla f^t(\xb_t), \xb_t - \zb \rangle$.  Letting $\zb=\xb^*$, we have the relationship between this lemma and the regret as $\Regret(T) = \sum_{t=1}^T f^t(\xb_t) - \sum_{t=1}^T f^t(\xb^*)\leq \sum_{t=1}^T \langle \nabla f^t(\xb_t), \xb_t - \xb^* \rangle$. 

Based on Lemma \ref{lem:dpp_bound_new}, we obtain the following lemma.
\begin{lemma}\label{lem:Q_bound} 
Setting $\eta,\gamma$, and $\alpha_{t}$ as in Theorem \ref{thm:main_regret}, Algorithm \ref{alg:ompd} ensures
\begin{align*}
&\alpha_{T+1} \leq \cO\big(( 1 + \varsigma^{-1} )  [V_*(T) + L_f^2 +1 ]^{1/2}+ \varsigma^{-1}\big), \\
&\|\Qb(T+1)\|_2 \leq \cO \big( (1 + \varsigma^{-1} ) [V_*(T) + L_f^2 +1]^{1/4} + \varsigma^{-1} \big),
\end{align*}
where $\cO$ hides the constants $\textnormal{\texttt{poly}}(R, H, F, G, L_g, K, \rho)$.
\end{lemma}
For this lemma, we develop a novel proof for the drift of the dual variable. Specifically, based on the relation in Lemma \ref{lem:dpp_bound_new}, the standard Slater's condition, and the dual updating rule, we derive an upper bound of $\|\Qb(t)\|_2$ in terms of step size $\alpha_{T+1}$ by the proof of contradiction. Since the upper bound of $\alpha_{T+1}$ is also unknown, further with the step size setting $\alpha_t$ which is coupled with $\|\Qb(t)\|$, we solve the upper bounds of $\alpha_{T+1}$ and $\|\Qb(T+1)\|_2$ together.  Our proof also depends on the elaborate setting of $\alpha_t$ and $\gamma$ such that the upper bound has a favorable dependence on $V_*(T)$. See Appendix \ref{sec:proof_Q_bound} for a detailed proof of Lemma \ref{lem:Q_bound}. This lemma shows that $\alpha_{T+1} \leq \cO\big(\sqrt{V_*(T)}\big)$ and $\|\Qb(T+1)\|_2 \leq \cO\big(V_*(T)^{1/4}\big)$ after $T+1$ rounds of Algorithm \ref{alg:ompd}, which is the key to obtaining the gradient variation regret bound and maintaining the $\cO(1)$ constraint violation. Moreover, the bounds in Lemma \ref{lem:Q_bound} have a dependence on $1/\varsigma$, which reveals how Slater's condition will affect our regret and constraint violation bounds.

With Lemma \ref{lem:dpp_bound_new}, we obtain the following upper bound for the regret.  
\begin{lemma}\label{lem:regret_al}
For any $\eta, \gamma \geq 0$, setting $\alpha_{t}$ the same as in Theorem \ref{thm:main_regret}, Algorithm \ref{alg:ompd} ensures
\begin{align*}
\Regret(T) \leq \cO \l( \eta  V_*(T) +  L_f^2 \eta + \gamma^2   +   \alpha_{T+1}\r),
\end{align*}
where $\cO$ hides absolute constants $\textnormal{\texttt{poly}}(R, H, G, L_g,\rho)$.
\end{lemma}
Letting $\zb=\xb^*$ and taking summation on both sides of \eqref{eq:dpp_bound_new}, we can further prove Lemma \ref{lem:regret_al} recalling that $\Regret(T) = \sum_{t=1}^T f^t(\xb_t) - \sum_{t=1}^T f^t(\xb^*)\leq \sum_{t=1}^T \langle \nabla f^t(\xb_t), \xb_t - \xb^* \rangle$. Please see Appendix \ref{sec:proof_regret_al} for the detailed proof. 
By the update rule of the dual variable, we have the following lemma for the constraint violation.

\begin{lemma}\label{lem:constr_al} For any $\eta > 0$, the updating rule of $Q_k(t)$ in Algorithm \ref{alg:ompd} ensures 
\begin{align*}
\Violation(T,k)  \leq \gamma^{-1} \|\Qb(T+1)\|_2.
\end{align*}
\end{lemma}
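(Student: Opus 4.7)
The plan is to exploit the ``max with zero-lower-bound'' structure of the virtual queue update to conclude that $Q_k(T+1)$ itself serves as an upper bound on $\gamma\cdot\Violation(T,k)$, after which the bound by $\|\Qb(T+1)\|_2$ follows trivially. Note that $\eta$ does not actually appear; the proof is purely about the dual update rule.

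First, I would establish that $Q_k(t)\ge 0$ for every $t\ge 0$ and every $k\in[K]$, by a one-line induction on $t$. The base case is $Q_k(0)=0$. For the inductive step, the two branches of the max both give a non-negative value: if $g_k(\xb_{t-1})\ge 0$ then $Q_k(t-1)+\gamma g_k(\xb_{t-1})\ge 0$ by the induction hypothesis, and if $g_k(\xb_{t-1})<0$ then $-\gamma g_k(\xb_{t-1})>0$. Either way the maximum is non-negative.

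Next, I would drop the max and use only the weaker inequality $Q_k(t)\ge Q_k(t-1)+\gamma g_k(\xb_{t-1})$, valid directly from the definition. Rearranging gives $Q_k(t)-Q_k(t-1)\ge \gamma g_k(\xb_{t-1})$. Telescoping this from $t=2$ to $t=T+1$ produces
\begin{align*}
Q_k(T+1)-Q_k(1)\;\ge\;\gamma\sum_{t=2}^{T+1} g_k(\xb_{t-1})\;=\;\gamma\sum_{t=1}^{T} g_k(\xb_t)\;=\;\gamma\cdot\Violation(T,k).
\end{align*}
Since $Q_k(1)\ge 0$ by the first step, I can drop it from the left side and obtain $Q_k(T+1)\ge \gamma\cdot \Violation(T,k)$.

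Finally, since $Q_k(T+1)\ge 0$, the scalar $Q_k(T+1)$ is bounded by the Euclidean norm of the vector $\Qb(T+1)$, i.e.\ $Q_k(T+1)\le \|\Qb(T+1)\|_2$. Dividing by $\gamma>0$ yields the stated bound $\Violation(T,k)\le \|\Qb(T+1)\|_2/\gamma$. There is no real obstacle here — the argument is the standard virtual-queue bookkeeping; the only thing to be careful about is the index shift between the update (which uses $\xb_{t-1}$) and the violation (which sums $g_k(\xb_t)$), which is why the telescoping is started at $t=2$.
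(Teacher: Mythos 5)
Your proposal is correct and follows essentially the same route as the paper's proof: drop the max to get $Q_k(t+1)-Q_k(t)\ge \gamma g_k(\xb_t)$, telescope over $t=1,\ldots,T$, discard $Q_k(1)\ge 0$, and bound the nonnegative coordinate by the $\ell_2$ norm. The only cosmetic difference is that you prove $Q_k(t)\ge 0$ by an explicit induction, whereas the paper invokes its Lemma~\ref{lem:Q_linear} for that fact.
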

See Appendix \ref{sec:proof_constr_al} for a detailed proof. This lemma indicates that the upper bound of the constraint violation is associated with the dual variable drift.

\subsubsection{Proof of Theorem \ref{thm:main_regret} and Theorem \ref{thm:main_constr}}

According to Lemma \ref{lem:regret_al}, by the settings of $\eta$ and $\gamma$ in Theorem \ref{thm:main_regret}, we have $\Regret(T) \leq  \cO ( \sqrt{V_*(T) + L_f^2 +1} +  \alpha_{T+1} )$,
which is due to $\eta  V_*(T) + L_f^2 \eta \leq  [V_*(T) + L_f^2 ]\eta \leq \sqrt{V_*(T) + L_f^2 + 1}$. Furthermore, combining the above inequality with the bound of $\alpha_{T+1}$ in Lemma \ref{lem:Q_bound} yields
\begin{align*}
\Regret(T) \leq \cO\Big((1 + \varsigma^{-1} ) \sqrt{V_*(T) + L_f^2 +1} + \varsigma^{-1}\Big).
\end{align*}
 According to Lemma \ref{lem:constr_al} and the drift bound of $\Qb(T+1)$ in Lemma \ref{lem:Q_bound}, with the setting of $\gamma$, we have
\begin{align*}
\Violation(T,k)  &\leq \gamma^{-1} \|\Qb(T+1)\|_2 \leq  \cO\l( 1 + \varsigma^{-1}\r),
\end{align*}
where the second inequality follows from $1/(\varsigma \gamma )  \leq  1/\varsigma$ since $1/\gamma  \leq 1 $. This completes the proof.

\section{Extension to  the Probability Simplex Case} \label{sec:extension}

In the probability simplex case, we have $\cX_0 = \Delta$ where $\Delta$ denotes the probability simplex as in \eqref{eq:prob_simplex} and the Bregman divergence is the KL divergence, namely $D(\xb,\yb) = D_{\KL}(\xb,\yb)$. Thus, the norm $\|\cdot\|$ defined in this space is $\ell_1$ norm $\|\cdot\|_1$ with the dual norm $\|\cdot\|_* = \|\cdot\|_\infty$ such that the gradient variation is measured by
\begin{align*}
V_\infty(T) = \sum_{t=1}^T \max_{\xb\in \cX_0} \|\nabla f^t(\xb) - \nabla f^{t-1}(\xb)\|_\infty^2.
\end{align*}
Then, the results in this section are expressed in terms of $V_\infty(T)$. Note that Assumption \ref{assump:bounded} is no longer valid, since $D_{\KL}(\xb, \yb)$ can tend to infinity by its definition if there is some entry $\yb_i \rightarrow 0$. Thus, we propose Algorithm \ref{alg:ompd_simplex} for the probability simplex case. 

To tackle the challenge of unbounded KL divergence, we propose to mix iterate $\tilde{\xb}_t$ with a vector $\boldsymbol{1} / d$ where $\boldsymbol{1} \in \RR^d$ is an all-one vector, as shown in Line~3 of Algorithm~\ref{alg:ompd_simplex}. Intuitively, the mixing step is to push the iterates $\tilde{\xb}_t$ slightly away from the boundary of $\Delta$ in a controllable way with a weight $\nu$ such that the KL divergence $D_{\KL} (\xb, \tilde{\yb}_t)$ for any $\xb\in \Delta$ will not be too large. Specifically, according to our theory, we set a suitable mixing weight as $\nu = 1/T$.

\begin{algorithm}[t]\caption{Online Primal-Dual Mirror-Prox Algorithm on Probability Simplex} 
   \setstretch{1.2}
	\begin{algorithmic}[1]
			\STATE {\bfseries Initialize:} $\gamma > 0; \nu \in (0, 1]; \xb_0= \xb_1= \tilde{\xb}_1 = \boldsymbol{1}/d;$\\
			\phantom{{\bfseries Initialize:}} $Q_k (0) = 0, k \in [K]$ 
			\FOR{$t=1,\ldots,T$}   
				\STATE Mix the iterates:
					$$
						\tilde{\yb}_t = (1-\nu)\tilde{\xb}_t + \frac{\nu}{d} \boldsymbol{1}.$$
					
		        \STATE Update the dual iterate $Q_k(t)$ via \eqref{eq:update_dual_simplex}.

				\STATE Update the primal iterate $\xb_t$ via \eqref{eq:update_primal_simplex}.

		        \STATE Play $\xb_t$.
		        \STATE Suffer loss $ f^t(\xb_t)$ and compute $\nabla f^t(\xb_t)$.

		        \STATE Update the intermediate iterate $\tilde{\xb}_{t+1}$ for the next round via \eqref{eq:update_intermediate_simplex}.
	
        \ENDFOR
	\end{algorithmic}\label{alg:ompd_simplex}
\end{algorithm}

For the dual iterate $Q_k(t)$ in Algorithm \ref{alg:ompd_simplex}, we have the same updating rule as in Algorithm \ref{alg:ompd}, which is
\begin{align}\label{eq:update_dual_simplex}
\hspace{-0.375cm}Q_k(t) = \max \{ - \gamma g_k(\xb_{t-1}), ~Q_k(t-1) + \gamma g_k (\xb_{t-1}) \}. 
\end{align} 
For the primal iterate $\xb_t$, the updating rule is now based on the new mixed iterate $\tilde{\yb}_t$ and the probability simplex $\Delta$, which is written as
\begin{align}
\begin{aligned}\label{eq:update_primal_simplex}
&\hspace{-0.2cm}\xb_t = \argmin_{\xb \in \Delta}~ \langle \nabla f^{t-1}(\xb_{t-1}), \xb \rangle + \textstyle{\sum_{k=1}^K} [Q_k(t) \\
&\hspace{-0.2cm}\qquad + \gamma g_k (\xb_{t-1}) ] \langle \gamma \nabla g_k (\xb_{t-1}), \xb \rangle + \alpha_t  D( \xb,  \tilde{\yb}_t ) . 
\end{aligned}
\end{align}
The intermediate iterate is also updated with $\tilde{\yb}_t$ as
\begin{align}
\begin{aligned}\label{eq:update_intermediate_simplex}
&\hspace{-0.2cm}\tilde{\xb}_{t+1} = \argmin_{\xb \in \Delta}~ \langle \nabla f^t(\xb_t), \xb \rangle + {\textstyle\sum_{k=1}^K} [Q_k(t) \\
&\hspace{-0.2cm} \qquad ~+ \gamma g_k (\xb_{t-1}) ] \langle \gamma \nabla g_k (\xb_{t-1}), \xb \rangle + \alpha_t  D ( \xb, \tilde{\yb}_t ). 
\end{aligned}
\end{align}

Therefore, the updates of Algorithm \ref{alg:ompd_simplex} lead to the following regret upper bound and the constraint violation bound.

\begin{theorem} [\textbf{Regret}]\label{thm:simplex_regret} Under Assumptions \ref{assump:global} and \ref{assump:slater}, setting  $\eta = [ V_\infty(T) +  L_f^2 + 1 ]^{-1/2}$, $ \gamma = [ V_\infty(T) + L_f^2 + 1 ]^{1/4}$, $\nu = 1/T$, and 
$\alpha_t = \max \{3(\eta L_f^2+  \gamma^2 L_g G  )  + 2/\eta + 3\xi_t, \alpha_{\tau-1} \big\}$ with $\alpha_0 = 0$, for $T > 2$ and $d \geq 1$, Algorithm \ref{alg:ompd} ensures the following regret
\begin{align*}
\Regret(T) \leq \tilde{\cO}(\sqrt{V_\infty(T)}\vee L_f),
\end{align*}
where $\tilde{\cO}$ hides constants $\textnormal{\texttt{poly}}(H, F, G, L_g, K, 1/\varsigma)$ and the logarithmic factor $\log^2(Td)$.
\end{theorem}

\begin{theorem} [\textbf{Constraint Violation}]\label{thm:simplex_constr} Under Assumptions \ref{assump:global} and \ref{assump:slater}, with the same settings of $\eta$, $\gamma $, $\nu$, and $\alpha_t$ as Theorem \ref{thm:main_regret}, Algorithm \ref{alg:ompd} ensures the following constraint violation for all $k \in [K]$ 
\begin{align*}
\Violation(T, k) \leq  \cO\big(\log T\big) =\tilde{\cO}(1),
\end{align*}
where $\cO$ hides $\textnormal{\texttt{poly}}(H, F, G, L_f, L_g, K, 1/\varsigma)$ and $\log d$, and $\tilde{\cO}$ hides the logarithmic dependence on $T$.
\end{theorem}
The results of Theorems \ref{thm:simplex_regret} and \ref{thm:simplex_constr} show that Algorithm \ref{alg:ompd_simplex} only introduces an extra logarithmic factor $\log T$ in both regret and constraint violation bounds. The extra $\log T$ is incurred by the iterate mixing step in Line~3 of Algorithm~\ref{alg:ompd_simplex}, which guarantees that the iterates stay away from the boundary. We provide a novel drift-plus-penalty analysis for this algorithm which incorporates the $\log T$ factor with the gradient-variation bound. See Appendix \ref{sec:proof_extension} for a detailed proof. Compared to \citet{wei2020online} for the probability simplex setting, our gradient-variation regret reduces to their $\tilde{\cO}(\sqrt{T})$ regret in the worst case, and our constraint violation bound $\tilde{\cO}(1)$ improves over their $\tilde{\cO}(\sqrt{T})$ result if applying their method to the setting of fixed constraints. 
\section{Conclusion}

In this paper, we proposed novel first-order methods for constrained OCO problems, which can achieve a gradient-variation bound $\mathcal{O}( \max\{ \sqrt{V_*(T)}, L_f \} )$ for the regret and $\mathcal{O}(1)$ bound for the constraint violation simultaneously in a general normed space $(\mathcal{X}_0,~\|\cdot\|)$. In particular, our bound is never worse than the best-known $(\mathcal O(\sqrt T), \mathcal O (1))$ bound under the Slater's condition and can be much better when the variation $\sqrt{V_*(T)}$ is small.

\bibliographystyle{ims}
\bibliography{bibliography}

\newpage
\appendix
\renewcommand{\thesection}{\Alph{section}}
\setcounter{section}{0}
%

\section{Proofs for Section \ref{sec:theory}}

\subsection{Preliminary Lemmas}

\begin{lemma} \label{lem:Q_linear} Supposing that the updating rule for $Q_k(t)$ is $Q_k(t+1) =  \max \{ - \gamma g_k(\xb_{t}), ~Q_k(t) + \gamma g_k (\xb_{t}) \}, \forall k = 1,\ldots, K$ with $\gamma > 0$, then we have
\begin{enumerate}[label=\alph*)]
\item  $Q_k(t) \geq 0$,
\item  $\frac{1}{2}[\|\Qb(t+1)\|_2^2 - \|\Qb(t)\|_2^2] \leq  \gamma \langle \Qb(t), \gb(\xb_t) \rangle + \gamma^2\|\gb(\xb_t) \|_2^2$,
\item  $\|\Qb(t+1)\|_2   \leq  \| \Qb(t)\|_2 + \gamma \|\gb(\xb_t)\|_2$,
\item  $\big| \|\Qb(t+1)\|_1 - \| \Qb(t)\|_1 \big| \leq \gamma \|\gb(\xb_t)\|_1$,
\end{enumerate}
where we let $\Qb(t) = [Q_1(t), \ldots, Q_K(t)]^\top$ and $\gb(\xb_t) = [g_1(\xb_t), \ldots, g_K(\xb_t)]^\top$.
\end{lemma}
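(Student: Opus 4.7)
The plan is to peel off the four parts in order, since each one reduces to a coordinate-wise statement about the single-index update $Q_k(t{+}1) = \max\{-\gamma g_k(\xb_t), Q_k(t) + \gamma g_k(\xb_t)\}$. For part (a), I would use induction on $t$: the base case $Q_k(0)=0$ is by initialization, and for the induction step I would exploit the inequality $\max\{a,b\} \geq (a+b)/2$. Applied to the two arguments of the max, this gives $Q_k(t{+}1) \geq \tfrac{1}{2}Q_k(t) \geq 0$ regardless of the sign of $g_k(\xb_t)$.

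For part (b), the clean move is to bound the square of a maximum by the sum of the squares: $\max\{u,v\}^2 \leq u^2+v^2$, which holds trivially since one of the squared terms is always nonnegative. With $u=-\gamma g_k(\xb_t)$ and $v=Q_k(t)+\gamma g_k(\xb_t)$ this yields
\begin{align*}
Q_k(t{+}1)^2 \leq \gamma^2 g_k(\xb_t)^2 + \bigl(Q_k(t)+\gamma g_k(\xb_t)\bigr)^2 = Q_k(t)^2 + 2\gamma Q_k(t) g_k(\xb_t) + 2\gamma^2 g_k(\xb_t)^2.
\end{align*}
Subtracting $Q_k(t)^2$, dividing by $2$, and summing over $k$ gives exactly the claim. (If a tighter bound were ever needed, a two-case split based on which branch of the max is active recovers the same inequality with a $\tfrac{1}{2}(Q_k(t)+\gamma g_k)^2 \geq 0$ slack in the first case.)

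For parts (c) and (d), the unifying ingredient is the pointwise bound $Q_k(t{+}1) \leq Q_k(t) + \gamma|g_k(\xb_t)|$, and its companion $|Q_k(t{+}1) - Q_k(t)| \leq \gamma|g_k(\xb_t)|$. The first follows because $-\gamma g_k \leq \gamma|g_k| \leq Q_k(t)+\gamma|g_k|$ (using part (a)) and $Q_k(t)+\gamma g_k \leq Q_k(t)+\gamma|g_k|$, so each branch of the max is dominated. For the second, the branch $Q_k(t{+}1)=Q_k(t)+\gamma g_k$ gives the bound immediately, while the branch $Q_k(t{+}1)=-\gamma g_k$ is activated only when $Q_k(t) \leq -2\gamma g_k$ (so $g_k\leq 0$), and then a short computation shows $-\gamma|g_k| \leq Q_k(t{+}1)-Q_k(t) \leq \gamma|g_k|$. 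With these, (c) follows from the entrywise bound $Q_k(t{+}1)^2 \leq (Q_k(t)+\gamma|g_k(\xb_t)|)^2$ followed by the triangle inequality for the $\ell_2$ norm, and (d) follows by summing the absolute differences, using that all $Q_k$'s are nonnegative so $\|\Qb\|_1=\sum_k Q_k$.

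There is no real obstacle here; the whole lemma is bookkeeping about a coordinatewise positive projection-like operator. The only mild subtlety is making sure the case analysis for (d) covers the branch where the max is attained at $-\gamma g_k$, because naively one might only control the upward jump and miss the downward one; writing the condition $-\gamma g_k \geq Q_k(t)+\gamma g_k$ as $Q_k(t)\leq -2\gamma g_k$ is what makes the two-sided bound transparent.
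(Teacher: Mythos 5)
Your proof is correct and follows essentially the same elementary route as the paper: coordinatewise case analysis on the max update, with nonnegativity of $Q_k(t)$ doing the work for the $\ell_1$ and $\ell_2$ bounds. The only difference is cosmetic — the paper cites prior work for parts (a)--(c) and writes out only (d), whereas you prove all four parts self-containedly, and your two-sided pointwise bound $|Q_k(t{+}1)-Q_k(t)|\leq \gamma|g_k(\xb_t)|$ is a slightly cleaner way to reach (d) than the paper's one-sided bounds followed by a max.
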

\begin{proof} The inequalities (a), (b), and (c) are immediately obtained from \citet{yu2020low}. For the final inequality, we prove it in the following way. According to the updating rule, we know $Q_k(t+1) =  \max \{ - \gamma g_k(\xb_{t}), ~Q_k(t) + \gamma g_k (\xb_{t}) \} \geq Q_k(t) + \gamma g_k (\xb_{t})$, which thus implies that $Q_k(t+1) - Q_k(t) \geq  \gamma g_k (\xb_{t})$. On the other hand, the updating rule also implies that $Q_k(t+1) =  \max \{ - \gamma g_k(\xb_{t}), ~Q_k(t) + \gamma g_k (\xb_{t}) \} \leq |Q_k(t)| + \gamma |g_k (\xb_{t})| = Q_k(t) + \gamma |g_k (\xb_{t})|$, where we also use the fact that $Q_k(t) \geq 0, \forall t$. Thus, we have $ \gamma g_k (\xb_{t}) \leq  Q_k(t+1) - Q_k(t) \leq \gamma |g_k (\xb_{t})|$, which leads to $\gamma \sum_{k=1}^K g_k (\xb_{t}) \leq  \|\Qb(t+1)\|_1 - \|\Qb(t)\|_1 \leq \gamma \|\gb (\xb_{t})\|_1$ since $Q_k(t) \geq 0, \forall t$. Then, we have $|\|\Qb(t+1)\|_1 - \|\Qb(t)\|_1 | \leq \max \{ \gamma \|\gb (\xb_{t})\|_1,  \gamma |\sum_{k=1}^K g_k (\xb_{t})|\} \leq \gamma \|\gb (\xb_{t})\|_1$. This completes the proof.
\end{proof}

\begin{lemma}[\citet{wei2020online}] \label{lem:pushback} Suppose that $h: \cC \mapsto \RR$ is a convex function with $\cC$ being a convex closed set. Let $D(\cdot, \cdot)$ be the Bregman divergence defined on the set $\cC$, and $M\subseteq\cC$ be a convex closed set. For any $\yb \in M \cap \mathrm{relint}(\cC)$ where $\mathrm{relint}(\cC)$ is the relative interior of $\cC$, letting $\xb^{opt} :=  \argmin_{\xb \in M} \{ h(\xb) + \eta D(\xb, \yb) \}$, then we have
\begin{align*}
h(\xb^{opt}) + \eta D(\xb^{opt}, \yb) \leq h(\zb) + \eta D(\zb, \yb) - \eta D(\zb, \xb^{opt}), ~~\forall \zb \in M.
\end{align*}

\end{lemma}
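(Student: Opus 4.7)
The plan is to establish this pushback inequality via the standard three-point trick: combine the first-order optimality condition for $\xb^{opt}$ with the Bregman three-point identity and the convexity of $h$. I expect the proof to be short and the structure to mirror the classical analysis of a single mirror descent step (as in \citet{bubeck2014convex}, among others).

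First, I would write down the first-order optimality condition for the minimizer $\xb^{opt}$ of the convex function $\xb \mapsto h(\xb) + \eta D(\xb, \yb)$ over the convex closed set $M$. Since $\yb \in \mathrm{relint}(\cC)$, the distance generating function $\omega$ is differentiable at $\yb$, and the gradient of $D(\cdot, \yb)$ at $\xb^{opt}$ equals $\nabla \omega(\xb^{opt}) - \nabla \omega(\yb)$. Hence there exists a subgradient $\mathbf{s} \in \partial h(\xb^{opt})$ such that, for every $\zb \in M$,
\begin{equation*}
\l\langle \mathbf{s} + \eta \l(\nabla \omega(\xb^{opt}) - \nabla \omega(\yb)\r), \zb - \xb^{opt} \r\rangle \geq 0.
\end{equation*}

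Next, I would invoke the three-point identity for the Bregman divergence generated by $\omega$, namely
\begin{equation*}
\l\langle \nabla \omega(\xb^{opt}) - \nabla \omega(\yb),\, \zb - \xb^{opt}\r\rangle = D(\zb, \yb) - D(\zb, \xb^{opt}) - D(\xb^{opt}, \yb),
\end{equation*}
which follows by expanding each of the three terms on the right using the definition of $D$ and canceling the $\omega$-values. Substituting this identity into the optimality inequality yields a bound of the form $\langle \mathbf{s}, \zb - \xb^{opt}\rangle \geq -\eta[D(\zb, \yb) - D(\zb, \xb^{opt}) - D(\xb^{opt}, \yb)]$. Finally, the convexity of $h$ gives $h(\zb) - h(\xb^{opt}) \geq \langle \mathbf{s}, \zb - \xb^{opt}\rangle$, and chaining these two inequalities together with a rearrangement of terms produces exactly the claimed bound.

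The only delicate point is ensuring that $\nabla \omega$ is well defined at both $\yb$ and $\xb^{opt}$ so that the three-point identity applies. The hypothesis $\yb \in M \cap \mathrm{relint}(\cC)$ handles $\yb$ directly. For $\xb^{opt}$, one uses that for the distance generating functions of interest (Euclidean squared norm on bounded $\cX_0$, or negative entropy on the simplex), any finite minimizer of $h(\xb) + \eta D(\xb, \yb)$ over $M$ must lie in $\mathrm{relint}(\cC)$ because $\nabla \omega$ blows up on the boundary of $\cC$; consequently $\nabla \omega(\xb^{opt})$ exists. This is the main (and only) obstacle, and it is a standard observation in the mirror descent literature, so no new technical work is required beyond citing this property of $\omega$.
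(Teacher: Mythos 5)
Your proof is correct and is exactly the standard pushback argument (first-order optimality at $\xb^{opt}$, the three-point identity $\langle \nabla\omega(\xb^{opt})-\nabla\omega(\yb), \zb-\xb^{opt}\rangle = D(\zb,\yb)-D(\zb,\xb^{opt})-D(\xb^{opt},\yb)$, then convexity of $h$). The paper gives no proof of its own — it defers to \citet{wei2019online}, whose proof is this same argument — so there is nothing further to compare; your remark about $\nabla\omega$ being well defined at $\xb^{opt}$ is the right caveat and is handled correctly.
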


\begin{lemma} \label{lem:lipschitz} For any function $h: \cC \mapsto \RR$, if the gradient of $h(\xb)$ is $L$-Lipstchitz continuous, i.e., $\|\nabla h(\xb)-\nabla h(\yb)\|_* \leq L\|\xb-\yb\|$, then we have
\begin{align*}
h(\xb) \leq h(\yb) + \langle \nabla h(\yb), \xb-\yb \rangle + \frac{L}{2}\|\xb-\yb\|^2, \forall \xb, \yb \in \cC.
\end{align*}
\end{lemma}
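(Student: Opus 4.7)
The plan is to reduce the multivariable inequality to a one-dimensional integration argument along the segment joining $\yb$ and $\xb$. Since $\cC$ serves as the domain of a function with Lipschitz gradient, I treat it as convex, so the segment $\{\yb + t(\xb - \yb) : t \in [0,1]\}$ lies entirely in $\cC$ and $\nabla h$ is defined along it. Introduce the auxiliary scalar function $\phi(t) := h(\yb + t(\xb - \yb))$, which satisfies $\phi'(t) = \langle \nabla h(\yb + t(\xb - \yb)), \xb - \yb\rangle$ by the chain rule, and apply the fundamental theorem of calculus to write $h(\xb) - h(\yb) = \int_0^1 \phi'(t)\,dt$.

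Next I would split the integrand by adding and subtracting the anchor gradient $\nabla h(\yb)$, obtaining
\[
h(\xb) - h(\yb) = \langle \nabla h(\yb), \xb - \yb\rangle + \int_0^1 \langle \nabla h(\yb + t(\xb - \yb)) - \nabla h(\yb), \xb - \yb\rangle\,dt.
\]
The first term is exactly what we want on the right-hand side of the target inequality. For the residual integral I would invoke the generalized Cauchy--Schwarz inequality $\langle \ab, \bb\rangle \leq \|\ab\|_* \|\bb\|$, which follows directly from the definition of the dual norm, and then use the $L$-Lipschitz hypothesis with the pair $\yb + t(\xb - \yb)$ and $\yb$ to conclude $\|\nabla h(\yb + t(\xb - \yb)) - \nabla h(\yb)\|_* \leq L t \|\xb - \yb\|$. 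Integrating $Lt\|\xb - \yb\|^2$ over $[0,1]$ yields precisely $\tfrac{L}{2}\|\xb - \yb\|^2$.

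This is a textbook descent lemma, so there is no genuine obstacle; the only point worth emphasizing is that the argument uses the dual-norm Cauchy--Schwarz inequality rather than the Euclidean one, which is essential because the paper works in a general normed space $(\cX_0, \|\cdot\|)$. Convexity of $h$ is not needed anywhere in this argument, which is consistent with how the lemma is stated. An equally valid alternative would be to cite this bound as a standard consequence of smoothness, but the integral proof above is short and self-contained.
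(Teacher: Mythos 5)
Your proof is correct: the integral argument with the dual-norm H\"older inequality $\langle \ab,\bb\rangle \le \|\ab\|_*\|\bb\|$ is exactly the standard descent-lemma derivation, and your remark that $\cC$ must be convex (true here, since it is only applied with $\cC=\cX_0$, which Assumption \ref{assump:global}(a) makes convex) is the one hypothesis worth flagging. The paper itself states this lemma without proof as a standard fact, so there is nothing to compare against; your argument is the canonical one and is valid in the general normed-space setting the paper works in.
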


\subsection{Proof of Lemma \ref{lem:dpp_bound}} \label{sec:proof_dpp_bound}

\begin{proof}

At the $t$-th round of Algorithm \ref{alg:ompd}, the drift-plus-penalty term is 
\begin{align*}
\frac{1}{2}[\|\Qb(t+1)\|_2^2 -\| \Qb(t) \|_2^2] + \langle \nabla f^{t-1}(\xb_{t-1}), \xb_t \rangle + \alpha_t  D( \xb_t,  \tilde{\xb}_t ),
\end{align*}
by setting $\xb = \xb_t$ in \eqref{eq:dpp}. Thus, we start our proof from bounding the above term.
\begin{align}
\begin{aligned} \label{eq:dpp_init} 
&\frac{1}{2}[\|\Qb(t+1)\|_2^2 -\| \Qb(t) \|_2^2] + \langle \nabla f^{t-1}(\xb_{t-1}), \xb_t \rangle + \alpha_t  D( \xb_t,  \tilde{\xb}_t ) \\
&\qquad \leq   \gamma \langle \Qb(t), \gb(\xb_t) \rangle + \gamma^2\|\gb(\xb_t) \|_2^2 + \langle \nabla f^{t-1}(\xb_{t-1}), \xb_t \rangle + \alpha_t  D( \xb_t,  \tilde{\xb}_t ),
\end{aligned}
\end{align}
where the inequality is by Lemma \ref{lem:Q_linear}.  To further bound the term $\langle \nabla f^{t-1}(\xb_{t-1}), \xb_t \rangle + \alpha_t  D( \xb_t,  \tilde{\xb}_t )$ on the right-hand side of \eqref{eq:dpp_init}, recall the updating rule of $\xb_t$ in Algorithm \ref{alg:ompd}, and then apply Lemma \ref{lem:pushback} by letting $\xb^{opt} = \xb_t$, $\yb = \tilde{\xb}_t$,  $\zb = \tilde{\xb}_{t+1}$, $\eta = \alpha_t$,  and $h(\xb) = \langle \nabla f^{t-1}(\xb_{t-1}), \xb \rangle + \sum_{k=1}^K [Q_k(t) + \gamma g_k (\xb_{t-1}) ] \langle \gamma \nabla g_k (\xb_{t-1}), \xb \rangle$. Thus, we have
\begin{align*}
&\langle \nabla f^{t-1}(\xb_{t-1}), \xb_t \rangle + \sum_{k=1}^K [Q_k(t) + \gamma g_k (\xb_{t-1}) ] \langle \gamma \nabla g_k (\xb_{t-1}), \xb_t \rangle + \alpha_t D(\xb_t, \tilde{\xb}_t) \\
&\qquad  \leq \langle \nabla f^{t-1}(\xb_{t-1}), \tilde{\xb}_{t+1} \rangle + \sum_{k=1}^K [Q_k(t) + \gamma g_k (\xb_{t-1}) ]\langle \gamma \nabla g_k (\xb_{t-1}), \tilde{\xb}_{t+1} \rangle \\
&\qquad \quad + \alpha_t D(\tilde{\xb}_{t+1}, \tilde{\xb}_t) - \alpha_t D(\tilde{\xb}_{t+1}, \xb_t),
\end{align*}
which leads to
\begin{align*}
&\langle \nabla f^{t-1}(\xb_{t-1}), \xb_t \rangle  + \alpha_t D(\xb_t, \tilde{\xb}_t) \\
&\qquad \leq \langle \nabla f^{t-1}(\xb_{t-1}), \tilde{\xb}_{t+1} \rangle + \sum_{k=1}^K [Q_k(t) + \gamma g_k (\xb_{t-1}) ]\langle \gamma \nabla g_k (\xb_{t-1}), \tilde{\xb}_{t+1} - \xb_t \rangle \\
&\qquad \quad  + \alpha_t D(\tilde{\xb}_{t+1}, \tilde{\xb}_t) - \alpha_t D(\tilde{\xb}_{t+1}, \xb_t).
\end{align*}
Combining the above inequality with \eqref{eq:dpp_init}, we have
\begin{align}
\begin{aligned}\label{eq:dpp_pb1}
&\frac{1}{2}[\|\Qb(t+1)\|_2^2 -\| \Qb(t) \|_2^2] + \langle \nabla f^{t-1}(\xb_{t-1}), \xb_t \rangle + \alpha_t  D( \xb_t,  \tilde{\xb}_t ) \\
&\qquad \leq   \gamma \langle \Qb(t), \gb(\xb_t) \rangle + \gamma^2\|\gb(\xb_t) \|_2^2 + \langle \nabla f^{t-1}(\xb_{t-1}), \tilde{\xb}_{t+1} \rangle + \alpha_t D(\tilde{\xb}_{t+1}, \tilde{\xb}_t) \\
&\qquad \quad  - \alpha_t D(\tilde{\xb}_{t+1}, \xb_t)   + \sum_{k=1}^K [Q_k(t) + \gamma g_k (\xb_{t-1}) ]\langle \gamma \nabla g_k (\xb_{t-1}), \tilde{\xb}_{t+1} - \xb_t \rangle.
\end{aligned}
\end{align}
For the term $ \langle \nabla f^{t-1}(\xb_{t-1}), \tilde{\xb}_{t+1} \rangle + \alpha_t D(\tilde{\xb}_{t+1}, \tilde{\xb}_t) $ in \eqref{eq:dpp_pb1}, we decompose it as
\begin{align}
\begin{aligned} \label{eq:decomp_prod}
&\langle \nabla f^{t-1}(\xb_{t-1}), \tilde{\xb}_{t+1} \rangle + \alpha_t D(\tilde{\xb}_{t+1}, \tilde{\xb}_t) \\
&\qquad = \langle \nabla f^{t-1}(\xb_{t-1}) - \nabla f^t(\xb_t), \tilde{\xb}_{t+1} \rangle + \langle \nabla f^t(\xb_t), \tilde{\xb}_{t+1} \rangle + \alpha_t D(\tilde{\xb}_{t+1}, \tilde{\xb}_t). 
\end{aligned}
\end{align}
We will bound the last two terms on the right-hand side of \eqref{eq:decomp_prod}. Recall the updating rule for $\tilde{\xb}_{t+1}$ in Algorithm \ref{alg:ompd}, and further employ Lemma \ref{lem:pushback} with setting $\xb^{opt} = \tilde{\xb}_{t+1}$, $\yb = \xb_t$, any $\zb \in  \cX_0$, $\eta = \alpha_t$,  and $h(\xb) =\langle \nabla f^t(\xb_t), \xb \rangle + \sum_{k=1}^K [Q_k(t) + \gamma g_k (\xb_{t-1}) ] \langle \gamma \nabla g_k (\xb_{t-1}), \xb \rangle$. Then, we have 
\begin{align*}
&\langle \nabla f^t(\xb_t), \tilde{\xb}_{t+1} \rangle + \sum_{k=1}^K [Q_k(t) + \gamma g_k (\xb_{t-1}) ] \langle \gamma \nabla g_k (\xb_{t-1}), \tilde{\xb}_{t+1} \rangle + \alpha_t D(\tilde{\xb}_{t+1}, \tilde{\xb}_t) \\
&\qquad \leq \langle \nabla f^t(\xb_t), \zb \rangle + \sum_{k=1}^K [Q_k(t) + \gamma g_k (\xb_{t-1}) ]\langle \gamma \nabla g_k (\xb_{t-1}), \zb  \rangle   + \alpha_t D(\zb, \tilde{\xb}_t) - \alpha_t D(\zb, \tilde{\xb}_{t+1}),
\end{align*}
rearranging whose terms yields
\begin{align*}
&\langle \nabla f^t(\xb_t), \tilde{\xb}_{t+1} \rangle  + \alpha_t D(\tilde{\xb}_{t+1}, \tilde{\xb}_t) \\
&\qquad \leq \langle \nabla f^t(\xb_t), \zb \rangle + \sum_{k=1}^K [Q_k(t) + \gamma g_k (\xb_{t-1}) ]\langle \gamma \nabla g_k (\xb_{t-1}), \zb - \tilde{\xb}_{t+1}   \rangle + \alpha_t D(\zb, \tilde{\xb}_t) - \alpha_t D(\zb, \tilde{\xb}_{t+1}).
\end{align*}
Then, combining the above inequality with \eqref{eq:dpp_pb1} and \eqref{eq:decomp_prod} gives
\begin{align}
\begin{aligned}\label{eq:dpp_pb2}
&\frac{1}{2}[\|\Qb(t+1)\|_2^2 -\| \Qb(t) \|_2^2] + \langle \nabla f^{t-1}(\xb_{t-1}), \xb_t \rangle + \alpha_t  D( \xb_t,  \tilde{\xb}_t ) \\
&\quad \leq   \gamma \langle \Qb(t), \gb(\xb_t) \rangle + \gamma^2\|\gb(\xb_t) \|_2^2 + \langle \nabla f^{t-1}(\xb_{t-1}) - \nabla f^t(\xb_t), \tilde{\xb}_{t+1} \rangle\\
&\quad \quad  - \alpha_t D(\tilde{\xb}_{t+1}, \xb_t)   + \sum_{k=1}^K [Q_k(t) + \gamma g_k (\xb_{t-1}) ]\langle \gamma \nabla g_k (\xb_{t-1}), \zb - \xb_t \rangle   \\
&\quad \quad + \alpha_t D(\zb, \tilde{\xb}_t)   - \alpha_t D(\zb, \tilde{\xb}_{t+1}) + \langle \nabla f^t(\xb_t), \zb \rangle.
\end{aligned}
\end{align}
The term $\gamma \langle \Qb(t), \gb(\xb_t) \rangle$ in \eqref{eq:dpp_pb2} can be further bounded as
\begin{align}
\begin{aligned} \label{eq:decomp_Qg}
\gamma \langle \Qb(t), \gb(\xb_t) \rangle &= \sum_{k=1}^K \gamma  Q_k(t) g_k(\xb_t) \\
& = \sum_{k=1}^K \gamma  [Q_k(t) + \gamma g_k(\xb_{t-1}) ] g_k(\xb_t) - \sum_{k=1}^K \gamma^2  g_k(\xb_{t-1}) g_k(\xb_t) \\
& \leq \sum_{k=1}^K \gamma   [Q_k(t) + \gamma g_k(\xb_{t-1}) ] \Big[g_k(\xb_{t-1}) + \langle \nabla g_k(\xb_{t-1}), \xb_t - \xb_{t-1}  \rangle \\
&\quad + \frac{L_g}{2} \|\xb_t - \xb_{t-1}\|^2 \Big] - \sum_{k=1}^K \gamma^2  g_k(\xb_{t-1}) g_k(\xb_t), 
\end{aligned}
\end{align} 
where the inequality is by 
\begin{align}
Q_k(t) + \gamma g_k(\xb_{k-1}) =  \max \{ - \gamma g_k(\xb_{t-1}), ~Q_k(t-1) + \gamma g_k (\xb_{t-1}) \} + \gamma g_k(\xb_{k-1}) \geq 0, \label{eq:Q_g_pos}
\end{align}
and also by the gradient Lipstchitz assumption of $g_k$ in Assumption \ref{assump:global} and Lemma \ref{lem:lipschitz} such that 
\begin{align*}
g_k(\xb_k) \leq g_k(\xb_{k-1}) + \langle \nabla g_k(\xb_{k-1}), \xb_k - \xb_{k-1}\rangle + \frac{L_g}{2} \|\xb_k - \xb_{k-1}\|^2. 
\end{align*}
Combining \eqref{eq:decomp_Qg} and \eqref{eq:dpp_pb2} and then rearranging the terms lead to
\begin{align}
\begin{aligned}\label{eq:dpp_pb3}
&\frac{1}{2}[\|\Qb(t+1)\|_2^2 -\| \Qb(t) \|_2^2] + \langle \nabla f^{t-1}(\xb_{t-1}), \xb_t \rangle + \alpha_t  D( \xb_t,  \tilde{\xb}_t ) \\
&\quad \leq   \sum_{k=1}^K \gamma   [Q_k(t) + \gamma g_k(\xb_{t-1}) ] [g_k(\xb_{t-1}) + \langle \nabla g_k(\xb_{t-1}), \zb - \xb_{t-1}  \rangle ] +  \langle \nabla f^t(\xb_t), \zb \rangle \\
&\quad \quad  + \sum_{k=1}^K  \frac{\gamma L_g}{2}   [Q_k(t) + \gamma g_k(\xb_{t-1}) ]  \|\xb_t - \xb_{t-1}\|^2 - \sum_{k=1}^K \gamma^2  g_k(\xb_{t-1}) g_k(\xb_t) + \gamma^2\|\gb(\xb_t) \|_2^2   \\
&\quad \quad + \langle \nabla f^{t-1}(\xb_{t-1}) - \nabla f^t(\xb_t), \tilde{\xb}_{t+1} \rangle - \alpha_t D(\tilde{\xb}_{t+1}, \xb_t) + \alpha_t D(\zb, \tilde{\xb}_t)   - \alpha_t D(\zb, \tilde{\xb}_{t+1}) .
\end{aligned}
\end{align}
Due to $Q_k(t) + \gamma g_k(\xb_{t-1}) \geq 0$ as discussed above and also by the convexity of $g_k$ such that $g_k(\xb_{t-1}) + \langle \nabla g_k(\xb_{t-1}), \zb - \xb_{t-1}  \rangle \leq g_k(\zb)$, we bound the first term on the right-hand side of \eqref{eq:dpp_pb3} as
\begin{align}
\begin{aligned} \label{eq:term_Qg1}
&\sum_{k=1}^K \gamma   [Q_k(t) + \gamma g_k(\xb_{t-1}) ] [g_k(\xb_{t-1}) + \langle \nabla g_k(\xb_{t-1}), \zb - \xb_{t-1}  \rangle ] \\
&\qquad \leq \sum_{k=1}^K \gamma   [Q_k(t) + \gamma g_k(\xb_{t-1}) ] g_k(\zb) =  \gamma  \langle \Qb(t) + \gamma \gb(\xb_{t-1}), \gb(\zb) \rangle.
\end{aligned}
\end{align}
Next, we bound the term $\sum_{k=1}^K  \frac{\gamma L_g}{2}   [Q_k(t) + \gamma g_k(\xb_{t-1}) ]  \|\xb_t - \xb_{t-1}\|^2$ in \eqref{eq:dpp_pb3} as follows
\begin{align}
\begin{aligned}\label{eq:term_Qg2}
\sum_{k=1}^K  \frac{\gamma L_g}{2}   [Q_k(t) + \gamma g_k(\xb_{t-1}) ]  \|\xb_t - \xb_{t-1}\|^2 & \stackrel{\textcircled{1}}{\leq} \sum_{k=1}^K  \frac{\gamma L_g}{2}   [Q_k(t) + \gamma G_k ]  \|\xb_t - \xb_{t-1}\|^2 \\
&\stackrel{\textcircled{2}}{=} \l( \frac{\gamma L_g}{2}  \sum_{k=1}^K   Q_k(t) + \frac{\gamma^2 L_g G}{2} \r)  \|\xb_t - \xb_{t-1}\|^2, 
\end{aligned}
\end{align}
where \textcircled{1} is due to the boundedness of $g_k$ such that $g_k(\xb_{t-1}) \leq |g_k(\xb_{t-1})| \leq G_k$ and \textcircled{2} is by $G:=\sum_{k=1}^K G_k$ according to Assumption \ref{assump:global}.

Furthermore, we give the bound of the term $ - \sum_{k=1}^K \gamma^2  g_k(\xb_{t-1}) g_k(\xb_t) + \gamma^2\|\gb(\xb_t) \|_2^2 $ in \eqref{eq:dpp_pb3} as
\begin{align}
\begin{aligned} \label{eq:term_Qg3}
&- \sum_{k=1}^K \gamma^2  g_k(\xb_{t-1}) g_k(\xb_t) + \gamma^2\|\gb(\xb_t) \|_2^2\\
&\qquad = \sum_{k=1}^K \gamma^2  \Big(- g_k(\xb_{t-1}) g_k(\xb_t) +  [g_k(\xb_t)]^2 \Big )\\
&\qquad \stackrel{\textcircled{1}}{=} \sum_{k=1}^K \gamma^2  \l(- \frac{1}{2} [g_k(\xb_{t-1})]^2 - \frac{1}{2}
[g_k(\xb_t)]^2 + \frac{1}{2}
[g_k(\xb_t) - g_k(\xb_{t-1}) ]^2 +  [g_k(\xb_t)]^2 \r )\\
&\qquad \stackrel{\textcircled{2}}{\leq}	\sum_{k=1}^K \gamma^2  \l(\frac{1}{2} [g_k(\xb_{t-1})]^2 - \frac{1}{2}
[g_k(\xb_t)]^2 + \frac{H^2_k}{2}
\|\xb_t - \xb_{t-1} \|^2 \r )\\
&\qquad \stackrel{\textcircled{3}}{\leq}  \frac{\gamma^2 }{2} \|\gb(\xb_t)\|_2^2  - \frac{\gamma^2 }{2} \|\gb(\xb_{t-1})\|_2^2
 + \frac{ \gamma^2 H^2}{2} \|\xb_t - \xb_{t-1} \|^2,
\end{aligned}
\end{align}
where \textcircled{1} is due to $ab = [a^2 + b^2 - (a-b)^2]/2$, \textcircled{2} is due to the Lipschitz continuity assumption of the function $g_k$ in Assumption \ref{assump:global} such that $|g_k(\xb_t) - g_k(\xb_{t-1}) | \leq H_k \|\xb_t-\xb_{t-1}\|$, and \textcircled{3} is by $\|\gb(\xb)\|_2^2 = \sum_{k=1}^K [g_k(\xb)]^2 $ and  the definition of $H:=\sum_{k=1}^K H_k$ in Assumption \ref{assump:global} such that $\sum_{k=1}^K H_k^2 \leq (\sum_{k=1}^K H_k)^2 = H^2$.

Therefore, plugging \eqref{eq:term_Qg1}, \eqref{eq:term_Qg2}, \eqref{eq:term_Qg3} into \eqref{eq:dpp_pb3}, we have
\begin{align*} 
&\frac{1}{2}\big[\|\Qb(t+1)\|_2^2 -\| \Qb(t) \|_2^2\big] + \langle \nabla f^{t-1}(\xb_{t-1}), \xb_t \rangle + \alpha_t  D( \xb_t,  \tilde{\xb}_t ) \\
&\qquad \leq \frac{1}{2} \big[ \gamma L_g  \| \Qb(t)\|_1 + \gamma^2 (L_g G + H^2) \big] \|\xb_t - \xb_{t-1}\|^2 +   \frac{\gamma^2 }{2}
\big[ \|\gb(\xb_t)\|_2^2 - \|\gb(\xb_{t-1})\|_2^2\big]  \\
&\qquad \quad + \langle \nabla f^t(\xb_t), \zb \rangle + \langle \nabla f^{t-1}(\xb_{t-1}) - \nabla f^t(\xb_t), \tilde{\xb}_{t+1} \rangle - \alpha_t D(\tilde{\xb}_{t+1}, \xb_t) \\
&\qquad \quad + \alpha_t D(\zb, \tilde{\xb}_t)   - \alpha_t D(\zb, \tilde{\xb}_{t+1}) + \gamma  \langle \Qb(t) + \gamma \gb(\xb_{t-1}), \gb(\zb) \rangle,
\end{align*}
where we further use $\sum_{k=1}^K Q_k(t) = \langle \Qb(t), \boldsymbol 1 \rangle = \|\Qb(t)\|_1$ according to the fact that $Q_k(t) \geq 0$ as shown in Lemma \ref{lem:Q_linear}. This completes the proof.
\end{proof}

\subsection{Proof of Lemma \ref{lem:dpp_bound_new}} \label{sec:proof_dpp_bound_new}

\begin{proof} Recall that Lemma \ref{lem:dpp_bound} gives the inequality 
\begin{align*}
&\frac{1}{2}\big[\|\Qb(t+1)\|_2^2 -\| \Qb(t) \|_2^2\big] + \langle \nabla f^{t-1}(\xb_{t-1}), \xb_t \rangle + \alpha_t  D( \xb_t,  \tilde{\xb}_t ) \\
& \leq \frac{\xi_t}{2}  \|\xb_t - \xb_{t-1}\|^2 +   \frac{\gamma^2 }{2}
\big[ \|\gb(\xb_t)\|_2^2 - \|\gb(\xb_{t-1})\|_2^2\big]  + \langle \nabla f^t(\xb_t), \zb \rangle + \alpha_t D(\zb, \tilde{\xb}_t)   - \alpha_t D(\zb, \tilde{\xb}_{t+1})  \\
& \quad + \langle \nabla f^{t-1}(\xb_{t-1}) - \nabla f^t(\xb_t), \tilde{\xb}_{t+1} \rangle - \alpha_t D(\tilde{\xb}_{t+1}, \xb_t) + \gamma  \langle \Qb(t) + \gamma \gb(\xb_{t-1}), \gb(\zb) \rangle,
\end{align*}
where $\xi_t :=  \gamma L_g  \| \Qb(t)\|_1 + \gamma^2 (L_g G + H^2)  $. Subtracting $\alpha_t  D( \xb_t,  \tilde{\xb}_t )$, $\langle \nabla f^t(\xb_t), \zb \rangle$, and $\langle \nabla f^{t-1}(\xb_{t-1}) - \nabla f^t(\xb_t), \xb_t \rangle$ from both sides yields
\begin{align*}
&\frac{1}{2}\big[\|\Qb(t+1)\|_2^2 -\| \Qb(t) \|_2^2\big] + \langle \nabla f^t(\xb_t), \xb_t - \zb \rangle \\
&\quad \leq \frac{\xi_t}{2} \|\xb_t - \xb_{t-1}\|^2 +   \frac{\gamma^2 }{2}
\big[ \|\gb(\xb_t)\|_2^2 - \|\gb(\xb_{t-1})\|_2^2\big] + \langle \nabla f^{t-1}(\xb_{t-1}) - \nabla f^t(\xb_t), \tilde{\xb}_{t+1} - \xb_t \rangle \\
&\quad \quad  - \alpha_t D(\tilde{\xb}_{t+1}, \xb_t) - \alpha_t  D( \xb_t,  \tilde{\xb}_t )  + \alpha_t D(\zb, \tilde{\xb}_t)   - \alpha_t D(\zb, \tilde{\xb}_{t+1}) + \gamma  \langle \Qb(t) + \gamma \gb(\xb_{t-1}), \gb(\zb) \rangle .
\end{align*}
Moreover, due to $ D(\tilde{\xb}_{t+1}, \xb_t)  \geq \rho \|\xb_t-\tilde{\xb}_{t+1}\|^2/2$  and $D( \xb_t,  \tilde{\xb}_t ) \geq \rho \|\xb_t-\tilde{\xb}_t\|^2/2 $, and also by $\|\xb_t -\xb_{t-1}\|^2 \leq (\|\xb_t -\tilde{\xb}_t\| + \|\xb_{t-1}-\tilde{\xb}_t\|)^2 \leq 2 \|\xb_t -\tilde{\xb}_t\|^2 + 2\|\xb_{t-1}-\tilde{\xb}_t\|^2 $, we have
\begin{align}
\begin{aligned} \label{eq:decomp_var}
&\frac{1}{2}\big[\|\Qb(t+1)\|_2^2 -\| \Qb(t) \|_2^2\big] + \langle \nabla f^t(\xb_t), \xb_t - \zb \rangle \\
&\qquad \leq \l( \xi_t - \frac{\rho \alpha_t }{2} \r) \|\xb_t - \tilde{\xb}_t\|^2 +   \frac{\gamma^2 }{2}
\big[ \|\gb(\xb_t)\|_2^2 - \|\gb(\xb_{t-1})\|_2^2\big]  + \alpha_t D(\zb, \tilde{\xb}_t)   \\
&\qquad \quad - \alpha_t D(\zb, \tilde{\xb}_{t+1}) + \xi_t \|\xb_{t-1} - \tilde{\xb}_t\|^2- \frac{\rho \alpha_t}{2}  \|\xb_t-\tilde{\xb}_{t+1}\|^2   \\
&\qquad \quad+ \langle \nabla f^{t-1}(\xb_{t-1}) - \nabla f^t(\xb_t), \tilde{\xb}_{t+1} - \xb_t \rangle + \gamma  \langle \Qb(t) + \gamma \gb(\xb_{t-1}), \gb(\zb) \rangle.
\end{aligned}
\end{align}
We can decompose the term $\alpha_t D(\zb, \tilde{\xb}_t) - \alpha_t D(\zb, \tilde{\xb}_{t+1})$ on the right-hand side of \eqref{eq:decomp_var} as
\begin{align*}
\alpha_t D(\zb, \tilde{\xb}_t) - \alpha_t D(\zb, \tilde{\xb}_{t+1}) &= \alpha_t D(\zb, \tilde{\xb}_t) - \alpha_{t+1} D(\zb, \tilde{\xb}_{t+1}) +  (\alpha_{t+1} - \alpha_t) D(\zb, \tilde{\xb}_{t+1}) .
\end{align*}
Next, we bound the last term in \eqref{eq:decomp_var} as follows
\begin{align*}
&\langle \nabla f^{t-1}(\xb_{t-1}) - \nabla f^t(\xb_t), \tilde{\xb}_{t+1} - \xb_t \rangle\\
&\qquad = \langle \nabla f^{t-1}(\xb_{t-1}) - \nabla f^{t-1}(\xb_t), \tilde{\xb}_{t+1} - \xb_t \rangle + \langle \nabla f^{t-1}(\xb_t) - \nabla f^t(\xb_t), \tilde{\xb}_{t+1} - \xb_t \rangle\\
&\qquad \stackrel{\textcircled{1}}{\leq}  \|\nabla f^{t-1}(\xb_{t-1}) - \nabla f^{t-1}(\xb_t)\|_* \| \xb_t - \tilde{\xb}_{t+1}  \| + \| \nabla f^{t-1}(\xb_t) - \nabla f^t(\xb_t)\|_* \| \xb_t - \tilde{\xb}_{t+1}  \|\\
&\qquad \stackrel{\textcircled{2}}{\leq}  L_f \|\xb_{t-1} - \xb_t\| \| \xb_t - \tilde{\xb}_{t+1}  \| + \| \nabla f^{t-1}(\xb_t) - \nabla f^t(\xb_t)\|_* \| \xb_t - \tilde{\xb}_{t+1}  \|\\
&\qquad \stackrel{\textcircled{3}}{\leq}  L_f (\|\xb_{t-1} - \tilde{\xb}_t\| + \|\xb_t-\tilde{\xb}_t \|) \| \xb_t - \tilde{\xb}_{t+1} \| + \| \nabla f^{t-1}(\xb_t) - \nabla f^t(\xb_t)\|_* \| \xb_t - \tilde{\xb}_{t+1}  \|\\
&\qquad \stackrel{\textcircled{4}}{\leq}  \eta L^2_f \big(\|\xb_{t-1} - \tilde{\xb}_t\|^2 + \|\xb_t-\tilde{\xb}_t \|^2\big) + \frac{1}{\eta} \| \xb_t - \tilde{\xb}_{t+1} \|^2  + \frac{\eta}{2}\| \nabla f^{t-1}(\xb_t) - \nabla f^t(\xb_t)\|^2_*,
\end{align*}
where \textcircled{1} is by Cauchy-Schwarz inequality for dual norm, $\textcircled{2}$ is by the gradient Lipschitz of $f^{t-1}$, $\textcircled{3}$ is by the triangular inequality for the norm $\|\cdot\|$, and $\textcircled{4}$ is by $ab\leq \theta/2 \cdot a^2 + 1/(2\theta) \cdot b^2, \forall \theta > 0$. 
Combining the above inequalities with \eqref{eq:decomp_var} gives
\begin{align*}
&\frac{1}{2}\big[\|\Qb(t+1)\|_2^2 -\| \Qb(t) \|_2^2\big] + \langle \nabla f^t(\xb_t), \xb_t - \zb \rangle \\
&\leq \l( \xi_t - \frac{\rho \alpha_t }{2} + \eta L_f^2 \r) \|\xb_t - \tilde{\xb}_t\|^2 +   \frac{\gamma^2 }{2}
\big[ \|\gb(\xb_t)\|_2^2 - \|\gb(\xb_{t-1})\|_2^2\big]  \\
&\quad + \alpha_t D(\zb, \tilde{\xb}_t)   - \alpha_{t+1} D(\zb, \tilde{\xb}_{t+1}) +  (\alpha_{t+1} - \alpha_t) D(\zb, \tilde{\xb}_{t+1})+ \l(\xi_t+ \eta L_f^2\r) \|\xb_{t-1} - \tilde{\xb}_t\|^2\\
&\quad - \l( \frac{\rho \alpha_t}{2} -  \frac{1}{\eta} \r)  \|\xb_t-\tilde{\xb}_{t+1}\|^2 + \frac{\eta}{2}\| \nabla f^{t-1}(\xb_t) - \nabla f^t(\xb_t)\|^2_* + \gamma  \langle \Qb(t) + \gamma \gb(\xb_{t-1}), \gb(\zb) \rangle.
\end{align*}
Also note that we have
\begin{align*}
&\l(\xi_t+ \eta L_f^2\r) \|\xb_{t-1} - \tilde{\xb}_t\|^2- \l( \frac{\rho \alpha_t}{2} -  \frac{1}{\eta}  \r)  \|\xb_t-\tilde{\xb}_{t+1}\|^2 \\
& = \l(\xi_t+ \eta L_f^2\r) \|\xb_{t-1} - \tilde{\xb}_t\|^2 - \l(\xi_{t+1}+ \eta L_f^2\r) \|\xb_t - \tilde{\xb}_{t+1}\|^2 + \l(\xi_{t+1}+ \eta L_f^2 - \frac{\rho \alpha_t}{2} + \frac{1}{\eta}   \r) \|\xb_t - \tilde{\xb}_{t+1}\|^2.
\end{align*}
Thus, defining $U_t := \l(\xi_t+ \eta L_f^2\r) \|\xb_{t-1} - \tilde{\xb}_t\|^2 + \alpha_t D(\zb, \tilde{\xb}_t) - \frac{\gamma^2}{2} \|\gb(\xb_{t-1})\|_2^2$,
we eventually have
\begin{align*}
&\frac{1}{2}\big[\|\Qb(t+1)\|_2^2 -\| \Qb(t) \|_2^2\big] + \langle \nabla f^t(\xb_t), \xb_t - \zb \rangle \\
&\leq \l( \xi_t - \frac{\rho \alpha_t }{2} + \eta L_f^2\r) \|\xb_t - \tilde{\xb}_t\|^2 +\l(\xi_{t+1}+ \eta L_f^2 - \frac{\rho \alpha_t}{2} + \frac{1}{\eta}   \r) \|\xb_t - \tilde{\xb}_{t+1}\|^2 +   U_t - U_{t+1}  \\
& \quad + \frac{\eta}{2}\| \nabla f^{t-1}(\xb_t) - \nabla f^t(\xb_t)\|^2_* +  (\alpha_{t+1} - \alpha_t) D(\zb, \tilde{\xb}_{t+1}) + \gamma  \langle \Qb(t) + \gamma \gb(\xb_{t-1}), \gb(\zb) \rangle.
\end{align*}
Here we also have
\begin{align*}
&\xi_{t+1}+ \eta L_f^2 - \frac{\rho \alpha_t}{2} + \frac{1}{\eta}  \\
&\qquad =  \gamma L_g  \| \Qb(t+1)\|_1 + \gamma^2 (L_g G + H^2) + \eta L_f^2 - \frac{\rho \alpha_t}{2} + \frac{1}{\eta} \\
&\qquad \stackrel{\textcircled{1}}{ \leq} \gamma L_g ( \| \Qb(t)\|_1 + \gamma \|\gb(t)\|_1) + \gamma^2 (L_g G + H^2) + \eta L_f^2 - \frac{\rho \alpha_t}{2} + \frac{1}{\eta}\\
&\qquad \stackrel{\textcircled{2}}{ \leq} \gamma L_g  \| \Qb(t)\|_1  + \gamma^2 (2L_g G + H^2) + \eta L_f^2 - \frac{\rho \alpha_t}{2} + \frac{1}{\eta}\\
&\qquad = \xi_t   + \eta L_f^2 - \frac{\rho \alpha_t}{2} + \frac{1}{\eta} + \gamma^2 L_g G,
\end{align*}
where \textcircled{1} is due to $|\|\Qb(t+1)\|_1 - \|\Qb(t)\|_1 | \leq \gamma \|\gb(\xb_t)\|_1$ as in Lemma \ref{lem:Q_linear}, and \textcircled{2} is by $\|\gb(\xb_t)\|_1 \leq G$ as in Assumption \ref{assump:global}. Thus, we have
\begin{align*}
&\frac{1}{2}\big[\|\Qb(t+1)\|_2^2 -\| \Qb(t) \|_2^2\big] + \langle \nabla f^t(\xb_t), \xb_t - \zb \rangle \\
&\leq \l(\xi_t+ \eta L_f^2 - \frac{\rho \alpha_t}{2} + \frac{1}{\eta} +  \gamma^2 L_g G  \r) ( \|\xb_t - \tilde{\xb}_t\|^2 + \|\xb_t - \tilde{\xb}_{t+1}\|^2) +   U_t - U_{t+1}  \\
& \quad + \frac{\eta}{2}\| \nabla f^{t-1}(\xb_t) - \nabla f^t(\xb_t)\|^2_* +  (\alpha_{t+1} - \alpha_t) D(\zb, \tilde{\xb}_{t+1}) + \gamma  \langle \Qb(t) + \gamma \gb(\xb_{t-1}), \gb(\zb) \rangle.
\end{align*}
In the above inequality, we wish to eliminate the term $(\xi_t+ \eta L_f^2 - \frac{\rho \alpha_t}{2} + \frac{1}{\eta} +  \gamma^2 L_g G  ) ( \|\xb_t - \tilde{\xb}_t\|^2 + \|\xb_t - \tilde{\xb}_{t+1}\|^2)$. One way is to set proper hyperparameters such that $\xi_t+ \eta L_f^2 - \frac{\rho \alpha_t}{2} + \frac{1}{\eta} +  \gamma^2 L_g G \leq 0$. In our paper, we set
\begin{align*}
\alpha_t = \max \l\{ \frac{2\gamma^2 L_g G}{\rho} + \frac{2\eta L_f^2}{\rho} + \frac{2}{\rho \eta} + \frac{2\xi_t }{\rho}, \alpha_{t-1} \r\} ~~\text{with}~~\alpha_0 = 0, 
\end{align*}
which, by recursion, is equivalent to
\begin{align*}
\alpha_t
&=\frac{2\gamma^2 L_g G}{\rho} + \frac{2\eta L_f^2}{\rho} + \frac{2}{\rho \eta} +   \frac{2}{\rho} \max_{t' \in [t]}~  \xi_{t'} \\
&= \frac{2\eta L_f^2 + 2 \gamma^2 (2L_g G + H^2)}{\rho} + \frac{2}{\rho \eta} +\frac{2 \gamma L_g }{\rho}   \max_{t' \in [t]} ~   \| \Qb(t')\|_1.
\end{align*}
Recall that $\xi_t :=  \gamma L_g  \| \Qb(t)\|_1 + \gamma^2 (L_g G + H^2)  $. This setting guarantees that $\alpha_{t+1} \geq \alpha_t$ and also
\begin{align*}
& \xi_t+ \eta L_f^2 - \frac{\rho \alpha_t}{2} + \frac{1}{\eta} +  \gamma^2 L_g G  \\
&\qquad = \gamma L_g (  \| \Qb(t)\|_1  - \max_{t' \in [t]} ~   \|\Qb(t) \|_1 ) \leq 0.
\end{align*}
Therefore, we eventually have
\begin{align*}
&\frac{1}{2}\big[\|\Qb(t+1)\|_2^2 -\| \Qb(t) \|_2^2\big] + \langle \nabla f^t(\xb_t), \xb_t - \zb \rangle \\
&\qquad \leq  U_t - U_{t+1}   + \frac{\eta}{2}\| \nabla f^{t-1}(\xb_t) - \nabla f^t(\xb_t)\|^2_* \\
&\qquad \quad +  (\alpha_{t+1} - \alpha_t) D(\zb, \tilde{\xb}_{t+1}) + \gamma  \langle \Qb(t) + \gamma \gb(\xb_{t-1}), \gb(\zb) \rangle.
\end{align*}
This completes the proof.
\end{proof}

\subsection{Proof of Lemma \ref{lem:Q_bound}}\label{sec:proof_Q_bound}

\begin{proof} We first consider the case that $\delta \leq T$. For any $t \geq 0, \delta \geq 1$ satisfying $t+\delta \in [T+1]$, taking summation on both sides of the resulting inequality in Lemma \ref{lem:dpp_bound_new} for $\delta$ slots and letting $\zb = \breve{\xb}$ as  defined in Assumption \ref{assump:slater} give
\begin{align}
\begin{aligned} \label{eq:Q_bound_init}
&\frac{1}{2}\big[\|\Qb(t+\delta)\|_2^2 -\| \Qb(t) \|_2^2\big] + \sum_{\tau = t}^{t+\delta-1}\langle \nabla f^\tau(\xb_\tau), \xb_\tau - \breve{\xb} \rangle \\
&\quad\leq  \frac{\eta}{2} \sum_{\tau = t}^{t+\delta-1} \| \nabla f^{\tau-1}(\xb_\tau) - \nabla f^\tau(\xb_\tau)\|^2_*+  \sum_{\tau = t}^{t+\delta-1} (\alpha_{\tau+1} - \alpha_\tau) D(\breve{\xb}, \tilde{\xb}_{\tau+1})  \\
&\quad \quad + U_t - U_{t+\delta} + \gamma  \sum_{\tau = t}^{t+\delta-1}  \langle \Qb(\tau) + \gamma \gb(\xb_{\tau-1}), \gb(\breve{\xb}) \rangle. 
\end{aligned}
\end{align}
where we define 
\begin{align*}
&U_\tau :=  \l(\xi_\tau+ \eta L_f^2\r)  \|\xb_{\tau-1} - \tilde{\xb}_\tau\|^2 + \alpha_\tau D(\breve{\xb}, \tilde{\xb}_\tau) - \frac{\gamma^2}{2} \|\gb(\xb_{\tau-1})\|_2^2. 
\end{align*}
We bound the last term in \eqref{eq:Q_bound_init} as
\begin{align}
&\gamma  \sum_{\tau = t}^{t+\delta-1}  \langle \Qb(\tau) + \gamma \gb(\xb_{\tau-1}), \gb(\breve{\xb}) \rangle \nonumber\\
&\qquad= \gamma  \sum_{\tau = t}^{t+\delta-1} \sum_{k=1}^K  [ Q_k(\tau) + \gamma g_k(\xb_{\tau-1})] g_k(\breve{\xb}) \nonumber\\
&\qquad \stackrel{\textcircled{1}}{\leq} -\varsigma \gamma  \sum_{\tau = t}^{t+\delta-1} \sum_{k=1}^K  [ Q_k(\tau) + \gamma g_k(\xb_{\tau-1})] \nonumber\\
&\qquad \stackrel{\textcircled{2}}{=} -\varsigma \gamma  \delta \|\Qb(t)\|_1 -  \varsigma \gamma \sum_{\tau = t}^{t+\delta-2}  (t+\delta-\tau-1 ) \l[\|\Qb(\tau+1)\|_1 - \|\Qb(\tau)\|_1\r]   -\varsigma \gamma^2  \sum_{\tau = t}^{t+\delta-1} \sum_{k=1}^K  g_k(\xb_{\tau-1}) \nonumber\\
&\qquad\stackrel{\textcircled{3}}{\leq} -\varsigma \gamma  \delta \|\Qb(t)\|_1 +  \varsigma \gamma^2\sum_{\tau = t}^{t+\delta-2}  (t+\delta-\tau-1 ) G + \gamma^2 \varsigma \delta  G \nonumber\\
&\qquad  \leq -\varsigma \gamma  \delta \|\Qb(t)\|_1 +  \frac{1}{2}  \gamma^2 \varsigma \delta^2  G + \gamma^2 \varsigma \delta  G, \label{eq:Q_bound_I}
\end{align}
where \textcircled{1} is due to $g_k(\breve{\xb}) \leq -\varsigma$ and $Q_k(\tau) + \gamma_k(\xb_{\tau-1}) \geq 0$ as shown in \eqref{eq:Q_g_pos}, \textcircled{2} is by $Q_k(t) \geq 0$ for any $t$ as in Lemma \ref{lem:Q_linear}, and \textcircled{3} is due to Lemma \ref{lem:Q_linear}  and Assumption \ref{assump:global}.

Since $\alpha_{\tau+1} \geq \alpha_\tau$, then we have
\begin{align}  \label{eq:Q_bound_III}
\sum_{\tau = t}^{t+\delta-1} (\alpha_{\tau+1} - \alpha_\tau) D(\breve{\xb}, \tilde{\xb}_{\tau+1}) \leq \sum_{\tau = t}^{t+\delta-1} (\alpha_{\tau+1} - \alpha_\tau)   R^2 =  \alpha_{t+\delta} R^2 - \alpha_t R^2 ,
\end{align}
where the inequality is by Assumption \ref{assump:bounded}. Moreover, we have
\begin{align} \label{eq:Q_bound_IV}
- \sum_{\tau = t}^{t+\delta-1}\langle \nabla f^\tau(\xb_\tau), \xb_\tau - \breve{\xb} \rangle \stackrel{\textcircled{1}}{\leq} \sum_{\tau = t}^{t+\delta-1} \| \nabla f^\tau(\xb_\tau)\|_* \|\xb_\tau - \breve{\xb} \| \stackrel{\textcircled{2}}{\leq}  \sqrt{\frac{2}{\rho}} F R \delta, 
\end{align}
where \textcircled{1} is by Cauchy-Schwarz inequality for the dual norm, and \textcircled{2} is by Assumption \ref{assump:bounded}.  In addition, due to $1 \leq t+\delta \leq T+1$, we also have
\begin{align} \label{eq:Q_bound_V}
\frac{\eta}{2} \sum_{\tau = t}^{t+\delta-1} \| \nabla f^{\tau-1}(\xb_\tau) - \nabla f^\tau(\xb_\tau)\|^2_*  \leq \frac{\eta}{2} \sum_{\tau = 1}^{T} \| \nabla f^{\tau-1}(\xb_\tau) - \nabla f^\tau(\xb_\tau)\|^2_*  \leq \frac{\eta}{2} V_*(T),
\end{align}
where the second inequality is by the definition of $V_*(T)$. Next, we bound the term $U_t - U_{t+\delta}$ as
\begin{align}
\begin{aligned} \label{eq:Q_bound_VI}
U_t - U_{t+\delta} &\stackrel{\textcircled{1}}{\leq}  \l(\xi_t+ \eta L_f^2\r)  \|\xb_{t-1} - \tilde{\xb}_t\|^2 + \alpha_t D(\breve{\xb}, \tilde{\xb}_t) + \frac{\gamma^2}{2} \|\gb(\xb_{t + \delta-1})\|_2^2\\
&\stackrel{\textcircled{2}}{\leq}  \frac{2}{\rho}\l(\xi_t+ \eta L_f^2\r)  R^2 + \alpha_t R^2 + \frac{\gamma^2}{2} G^2\\
&= \frac{2}{\rho}[ \gamma L_g  \| \Qb(t)\|_1 + \gamma^2 (L_g G + H^2)+ \eta L_f^2 ]  R^2 + \alpha_t R^2 + \frac{\gamma^2}{2} G^2 ,
\end{aligned}
\end{align}
where \textcircled{1} is by removing the negative terms, and \textcircled{2} is due to Assumption \ref{assump:bounded}. 

Now, we combine \eqref{eq:Q_bound_I} 
\eqref{eq:Q_bound_III} 
\eqref{eq:Q_bound_IV} \eqref{eq:Q_bound_V} \eqref{eq:Q_bound_VI} with \eqref{eq:Q_bound_init} and then obtain
\begin{align}
\begin{aligned} \label{eq:Q_delta_diff}
&\frac{1}{2}\big[\|\Qb(t+\delta)\|_2^2 -\| \Qb(t) \|_2^2\big] \\
&\qquad \leq \frac{2}{\rho}[ \gamma L_g  \| \Qb(t)\|_1 + \gamma^2 (L_g G + H^2)+ \eta L_f^2 ]  R^2  + \frac{\gamma^2}{2} G^2 + \frac{\eta}{2} V_*(T)  \\
&\qquad  \quad  +  \sqrt{\frac{2}{\rho}} F R \delta + \alpha_{t+\delta} R^2 -\varsigma \gamma  \delta \|\Qb(t)\|_1 +  \frac{1}{2}  \gamma^2 \varsigma \delta^2  G + \gamma^2 \varsigma \delta  G \\
&\qquad \leq \overline{C} + \alpha_{t+\delta} R^2   - \gamma \l(\varsigma  \delta - \frac{2}{\rho}  L_g  R^2 \r) \|\Qb(t)\|_1,
\end{aligned}
\end{align}
where we let
\begin{align}
\begin{aligned} \label{eq:def_bar_C}
\overline{C}:=&  \l(\frac{2(L_g G + H^2)R^2 }{\rho}  +  \frac{G^2 + \varsigma \delta^2 G}{2} + \varsigma \delta  G  \r) \gamma^2 + \l( \frac{2L_f^2 R^2}{\rho}  +  \frac{V_*(T)}{2}\r) \eta   +  \sqrt{\frac{2}{\rho}} F R \delta .
\end{aligned}
\end{align}
Consider a time interval of $[1, T+1-\delta]$. Since $\alpha_{t+\delta} \leq \alpha_{T+1}$ for any $t\in [1, T+1-\delta]$ due to non-decrease of $\alpha_t$, and letting 
\begin{align}
 \delta \geq 2L_g R^2 / (\rho\varsigma) \label{eq:delta_cond1}
\end{align}
in \eqref{eq:Q_delta_diff}, then we have for any $t\in [1,T+1-\delta]$
\begin{align} \label{eq:Q_delta_diff_2}
\|\Qb(t+\delta)\|_2^2 -\| \Qb(t) \|_2^2 \leq 2 \overline{C} + 2\alpha_{T+1}  R^2   - \varsigma  \delta \gamma  \|\Qb(t)\|_1 \leq 2 \overline{C} + 2\alpha_{T+1}  R^2   - \varsigma  \delta \gamma  \|\Qb(t)\|_2, 
\end{align}
where the second inequality is due to $\|\Qb(t)\|_1 \geq \|\Qb(t)\|_2$.

If $\varsigma \delta \gamma \|Q(t)\|_2 /2 \geq 2 \overline{C} + 2\alpha_{T+1}  R^2$, namely $\|\Qb(t)\|_2 \geq 4( \overline{C} + \alpha_{T+1}  R^2) / (\varsigma \delta \gamma )$, by \eqref{eq:Q_delta_diff_2}, we have
\begin{align*}
\|\Qb(t+\delta)\|_2^2 &\leq   \| \Qb(t) \|_2^2  - \varsigma  \delta \gamma  \|\Qb(t)\|_2 + 2 \overline{C} + 2\alpha_{T+1}  R^2\\
&\leq   \| \Qb(t) \|_2^2  - \frac{ \varsigma  \delta \gamma}{2}  \|\Qb(t)\|_2 \\ 
&\leq   \l(\| \Qb(t) \|_2  - \frac{ \varsigma  \delta \gamma}{4}  \r)^2.
\end{align*}
From the second inequality above, we know that $\|\Qb(t)\|_2 \geq   \varsigma \delta \gamma/2$ holds such that $\| \Qb(t) \|_2  \geq  \varsigma  \delta \gamma/4$. Thus, the above inequality leads to
\begin{align} \label{eq:Q_delta_drift}
\|\Qb(t+\delta)\|_2 \leq  \| \Qb(t) \|_2  - \frac{ \varsigma  \delta \gamma}{4} .
\end{align}
Next, we prove that $\|\Qb(t)\|_2 \leq 4 (\overline{C} + \alpha_{T+1}  R^2) / (\varsigma \delta \gamma ) + 2\delta\gamma G$ for any $t \in [1, T+1-\delta]$ when $\delta \geq 2L_g R^2 / (\rho\varsigma)$. 

For any $t \in [1, \delta]$, by (c) of Lemma \ref{lem:Q_linear}, we can see that $\|\Qb(t)\|_2 \leq \|\Qb(0)\|_2 + \gamma \sum_{\tau =1}^t \|\gb(\xb_\tau)\|_2 \leq \gamma \sum_{\tau =1}^t \|\gb(\xb_0)\|_1 \leq \gamma \delta G < 4 (\overline{C} + \alpha_{t+\delta} R^2) / (\varsigma \delta \gamma ) + 2\delta\gamma G$ since we initialize $\Qb(0) = \boldsymbol{0}$ in the proposed algorithm. Furthermore, we need to prove for any  $t \in [\delta+1, T+1-\delta]$  that $\|\Qb(t)\|_2 \leq 4 ( \overline{C} + \alpha_{T+1}  R^2) / (\varsigma \delta \gamma ) + 2 \delta\gamma G $ by contradiction. Here we assume that there exists $t_0 \in [\delta+1, T+1-\delta]$ being the first round that 
\begin{align} \label{eq:contra}
\|\Qb(t_0)\|_2 > \frac{ 4 ( \overline{C} + \alpha_{T+1}  R^2)} {\varsigma \delta \gamma } + 2 \delta\gamma G ,
\end{align}
which also implies $\|\Qb(t_0-\delta)\|_2 \leq 4 ( \overline{C} + \alpha_{T+1}  R^2) / (\varsigma \delta \gamma ) + 2 \delta\gamma G$. Then, we make the analysis from two aspects:
\begin{itemize}
\item If $\|\Qb(t_0-\delta)\|_2 < 4 ( \overline{C} + \alpha_{T+1}  R^2) / (\varsigma \delta \gamma )$, then by (c) of Lemma \ref{lem:Q_linear}, we have $\|\Qb(t_0)\|_2 \leq \|\Qb(t_0-\delta)\|_2 + \delta \gamma G < 4 ( \overline{C} + \alpha_{T+1}  R^2) / (\varsigma \delta \gamma ) +  \delta \gamma G$, which contradicts the assumption in \eqref{eq:contra}.

\item If $4 ( \overline{C} + \alpha_{T+1}  R^2) / (\varsigma \delta \gamma ) + 2 \delta\gamma G \geq \|\Qb(t_0-\delta)\|_2 \geq 4 ( \overline{C} + \alpha_{T+1}  R^2) / (\varsigma \delta \gamma )$, according to \eqref{eq:Q_delta_drift}, we know
\begin{align*}
\|\Qb(t_0)\|_2 \leq  \| \Qb(t_0-\delta) \|_2  - \frac{ \varsigma  \delta \gamma}{4} &\leq \frac{ 4 ( \overline{C} + \alpha_{T+1} R^2)}{\varsigma \delta \gamma } + 2 \delta\gamma G - \frac{ \varsigma  \delta \gamma}{4} \\
&< \frac{ 4 ( \overline{C} + \alpha_{T+1}  R^2)}{\varsigma \delta \gamma } + 2 \delta\gamma G,
\end{align*}
which also contradicts \eqref{eq:contra}.
\end{itemize}
Thus, we know that there does not exist such $t_0$ and for any  $t\in [1, T+1-\delta]$, the following inequality always holds
\begin{align*}
\|\Qb(t)\|_2 \leq \frac{ 4 ( \overline{C} + \alpha_{T+1}  R^2)} {\varsigma \delta \gamma } + 2 \delta\gamma G .
\end{align*}
Moreover, further by (c) of Lemma \ref{lem:Q_linear}, we know that for any $t\in [1, T+1]$, we have
\begin{align}\label{eq:Q_bound_al}
\|\Qb(t)\|_2 \leq \frac{ 4 ( \overline{C} + \alpha_{T+1}  R^2)} {\varsigma \delta \gamma } + 3 \delta\gamma G.
\end{align}
The value of $\alpha_{T+1}$ remains to be determined, by which we can give the exact value of the bound in \eqref{eq:Q_bound_al}.  By plugging \eqref{eq:Q_bound_al} into the setting of $\alpha_{T+1}$, we have 
\begin{align}
\begin{aligned} \label{eq:alpha_ineq}
\alpha_{T+1} 
&= \frac{2\eta L_f^2 + 2 \gamma^2 (2L_g G + H^2)}{\rho} + \frac{2}{\rho \eta} +\frac{2 \gamma L_g }{\rho}   \max_{t' \in [T+1]} ~   \| \Qb(t')\|_1 \\
&\leq \frac{2\eta L_f^2 + 2 \gamma^2 (2L_g G + H^2)}{\rho} + \frac{2}{\rho \eta} +\frac{2 \sqrt{K} \gamma L_g }{\rho}    \l[ \frac{ 4 ( \overline{C} + \alpha_{T+1}  R^2)} {\varsigma \delta \gamma } + 3 \delta\gamma G \r] \\
&=\overline{C}' +\frac{2 \sqrt{K} L_g R^2}{\rho \varsigma \delta }  \alpha_{T+1},
\end{aligned}
\end{align}
where the inequality is due to $\|\Qb(t)\|_1 \leq \sqrt{K} \|\Qb(t)\|_2$, and the constant $\overline{C}'$ is defined as 
\begin{align}\label{eq:def_bar_C_p}
\overline{C}' :=& \frac{2\eta L_f^2 + 2 \gamma^2 (2L_g G + H^2)}{\rho} + \frac{2}{\rho \eta} +\frac{2 \sqrt{K} \gamma L_g }{\rho}    \l( \frac{ 4 \overline{C} } {\varsigma \delta \gamma } + 3 \delta\gamma G \r).
\end{align}
Further letting $1 - 2 \sqrt{K} L_g R^2/(\rho \varsigma \delta)  \geq 1/2$, which is equivalent to,
\begin{align}\label{eq:delta_cond2}
\delta \geq \frac{4\sqrt{K}L_g R^2}{\rho \varsigma},
\end{align}
with \eqref{eq:alpha_ineq}, we have
\begin{align}\label{eq:alpha_bound}
\alpha_{T+1} \leq 2\overline{C}'.
\end{align}
Substituting \eqref{eq:alpha_bound} back into \eqref{eq:Q_bound_al} gives
\begin{align}\label{eq:Q_bound} 
\|\Qb(t)\|_2 \leq \frac{ 4 ( \overline{C} + 2\overline{C}'  R^2)} {\varsigma \delta \gamma } + 3 \delta\gamma G, ~~\forall t \in [T+1].
\end{align}
Next, we need to give tight bounds of $\alpha_{T+1}$ and $\|\Qb(T+1)\|_2$ by setting the value of $\delta$. Comparing \eqref{eq:delta_cond2} with \eqref{eq:delta_cond1}, we can see that $\delta$ should be in the range of $[4\sqrt{K}L_g R^2/(\rho \varsigma), +\infty)$.  Note that by the definitions of $\overline{C}$ in \eqref{eq:def_bar_C} and $\overline{C}'$ in \eqref{eq:def_bar_C_p}, we observe that the dependence of $\overline{C}$ on $\delta$ is $\cO(\delta^2 + \delta + 1)$ such that $\overline{C}' = \cO(\delta + \delta^{-1} + 1)$. Therefore, the dependence of $\alpha_{T+1}$ on $\delta$ is $\cO(\delta + \delta^{-1} + 1)$ and $\|\Qb(T+1)\|_2 = \cO( \delta + \delta^{-1} + \delta^{-2})$. Thus, both $\alpha_{T+1}$ and $\|\Qb(T+1)\|_2$ are convex functions w.r.t. $\delta$ in $[4\sqrt{K}L_g R^2/(\rho \varsigma), +\infty)$. Then, for the upper bounds of $\alpha_{T+1}$ and $\|\Qb(T+1)\|_2$, we simply set 
\begin{align}\label{eq:delta_value}
\delta = \frac{4\sqrt{K}\max\{ L_gR^2, 1\} }{\rho \varsigma},
\end{align}  
such that the tightest upper bounds of them must be no larger than the values with setting $\delta$ as in \eqref{eq:delta_value}. Therefore, for any $T \geq \delta$, the results \eqref{eq:alpha_bound} and \eqref{eq:Q_bound} hold. The $\max\{\cdot, 1\}$ operator is for preventing $L_g R^2$ from going to infinitesimal.

Then, we consider the case where $T <  \delta$. Specifically, due to (c) of Lemma \ref{lem:Q_linear} and Assumption \ref{assump:global}, we have
\begin{align*}
\|\Qb(t)\|_2 \leq \gamma (T+1) G \leq \gamma \delta G, ~~~\forall t \in [T+1],   
\end{align*}
which satisfies \eqref{eq:Q_bound}. Therefore, we further have
\begin{align*}
\alpha_{T+1} 
&= \frac{2\eta L_f^2 + 2 \gamma^2 (2L_g G + H^2)}{\rho} + \frac{2}{\rho \eta} +\frac{2 \gamma L_g }{\rho}   \max_{t' \in [T+1]} ~   \| \Qb(t')\|_1 \\
&\leq \frac{2\eta L_f^2 + 2 \gamma^2 (2L_g G + H^2)}{\rho} + \frac{2}{\rho \eta} +\frac{2 \sqrt{K} \gamma^2 L_g  \delta G }{\rho}  \leq 2\overline{C}',
\end{align*}
which satisfies \eqref{eq:alpha_bound}. Thus, we have that \eqref{eq:alpha_bound} and \eqref{eq:Q_bound} hold for any $T>0$ and $\delta = 4\sqrt{K}\max\{ L_gR^2, 1\} /(\rho \varsigma)$.

Now, we let $\eta = [ V_*(T) +  L_f^2 + 1]^{-1/2}$ and $ \gamma = [ V_*(T) +  L_f^2 + 1]^{1/4}$. We have
\begin{align*}
&\overline{C} \leq  \l( c_1 + \frac{c_2}{\varsigma} \r)  \sqrt{V_*(T) +  L_f^2  + 1} + \frac{c_3}{\varsigma},\\
&\overline{C}' \leq  \l( c_4 + \frac{c_5}{\varsigma} \r) \sqrt{V_*(T) +  L_f^2  + 1} + \frac{c_6}{\varsigma}.
\end{align*}
In particular, $c_1, c_2, c_3, c_4, c_5$, and $c_6$ are constants $\textnormal{\texttt{poly}}(R, H, F, G, L_g, K, \rho)$ that can be decided by \eqref{eq:def_bar_C}, \eqref{eq:def_bar_C_p}, and \eqref{eq:delta_value}. Thus, we have that 
\begin{align*}
&\alpha_{T+1} \leq \l( \overline{C}_1 + \frac{\overline{C}_2}{\varsigma} \r) \max \sqrt{V_*(T) +  L_f^2  + 1}  + \frac{\overline{C}_3}{\varsigma}, \\
&\|\Qb(T+1)\|_2 \leq \l( \overline{C}'_1 + \frac{\overline{C}'_2}{\varsigma} \r) \big[V_*(T) +  L_f^2  + 1\big]^{1/4} + \frac{\overline{C}'_3}{ \varsigma},
\end{align*}
where we also use the fact that $1/[  V_*(T) + L_f^2+1 ]^{1/4} \leq 1$, and $\overline{C}_1, \overline{C}_2, \overline{C}_3, \overline{C}_1'$, $\overline{C}'_2$, and $\overline{C}'_3$ are constants $\textnormal{\texttt{poly}}(R, H, F, G, L_g, K, \rho)$ that can be decided by  $c_1, c_2, c_3, c_4, c_5$, and $c_6$ as well as \eqref{eq:alpha_bound} and \eqref{eq:Q_bound}. This completes the proof.
\end{proof}

\subsection{Proof of Lemma \ref{lem:regret_al}}\label{sec:proof_regret_al}

\begin{proof} 
We start the proof by bounding the regret 
\begin{align} \label{eq:regret_init} 
\Regret(T) &= \sum_{t=1}^T f^t(\xb_t) - \sum_{t=1}^T f^t(\xb^*)\leq \sum_{t=1}^T \langle \nabla f^t(\xb_t), \xb_t - \xb^* \rangle,
\end{align}
where the equality is by the definition $\xb^* := \argmin_{\xb \in \cX} \sum_{t=1}^T  f^t(\xb)$, and the inequality is due to the convexity of function $f^t$ such that $f^t(\xb_t)-f^t(\xb^*) \leq \langle \nabla f^t(\xb_t), \xb_t - \xb^* \rangle$.

Recalling Lemma \ref{lem:dpp_bound_new}, setting $\zb = \xb^*$ in the lemma gives
\begin{align}
\begin{aligned}\label{eq:regret_dpp}
&\frac{1}{2}\big[\|\Qb(t+1)\|_2^2 -\| \Qb(t) \|_2^2\big] + \langle \nabla f^t(\xb_t), \xb_t - \xb^* \rangle \\
&\qquad \leq   U_t - U_{t+1} + \frac{\eta}{2}\| \nabla f^{t-1}(\xb_t) - \nabla f^t(\xb_t)\|^2_*\\
&\qquad \quad   +  (\alpha_{t+1} - \alpha_t) D(\xb^*, \tilde{\xb}_{t+1})  + \gamma  \langle \Qb(t) + \gamma \gb(\xb_{t-1}), \gb(\xb^*) \rangle,
\end{aligned}
\end{align}
where we define 
\begin{align*}
&U_t :=  \l(\xi_t+ \eta L_f^2\r) \|\xb_{t-1} - \tilde{\xb}_t\|^2 + \alpha_t D(\xb^*, \tilde{\xb}_t) - \gamma^2/2 \cdot  \|\gb(\xb_{t-1})\|_2^2.
\end{align*}
Since $\xb^*$ is the solution that satisfies all the constraints, i.e., $g_k(\xb^*) \leq 0$ and also $Q_k(\xb_t) + \gamma g_k(\xb_{t-1}) \geq 0$ as shown in \eqref{eq:Q_g_pos} ,thus the last term in \eqref{eq:regret_dpp} can be bounded as
\begin{align} \label{eq:regret_neg_last}
\gamma  \langle \Qb(t) + \gamma \gb(\xb_{t-1}), \gb(\xb^*) \rangle = \sum_{k=1}^K [Q_k(\xb_t) + \gamma g_k(\xb_{t-1})] g_k(\xb^*)  \leq 0.
\end{align}
Combining \eqref{eq:regret_dpp} \eqref{eq:regret_neg_last} with  \eqref{eq:regret_init} yields
\begin{align}
\begin{aligned} \label{eq:regret_al}
\Regret(T) &\leq \sum_{t=1}^T \langle \nabla f^t(\xb_t), \xb_t - \xb^* \rangle \\
&\leq \sum_{t=1}^T \bigg( \zeta_t (\|\xb_t - \tilde{\xb}_t\|^2 +\|\xb_{t-1} - \tilde{\xb}_t\|^2)  + \frac{\eta}{2}\| \nabla f^{t-1}(\xb_t) - \nabla f^t(\xb_t)\|^2_* \\
&\quad +   U_t - U_{t+1} + (\alpha_{t+1} - \alpha_t) D(\xb^*, \tilde{\xb}_{t+1}) + \frac{1}{2}\big[\|\Qb(t)\|_2^2 -\| \Qb(t+1) \|_2^2\big]\bigg)\\
&= \sum_{t=1}^T  \zeta_t (\|\xb_t - \tilde{\xb}_t\|^2 +\|\xb_{t-1} - \tilde{\xb}_t\|^2)  + \frac{\eta}{2} \sum_{t=1}^T  \| \nabla f^{t-1}(\xb_t) - \nabla f^t(\xb_t)\|^2_* \\
&\quad + \sum_{t=1}^T (\alpha_{t+1} - \alpha_t) D(\xb^*, \tilde{\xb}_{t+1}) +   U_1 - U_{T+1} + \frac{1}{2}\big[\|\Qb(1)\|_2^2 -\| \Qb(T+1) \|_2^2\big].
\end{aligned}
\end{align}
Since $\alpha_{t+1} \geq \alpha_t, \forall t$, thus we have
\begin{align} \label{eq:regret_alpha_sum}
\sum_{t=1}^T (\alpha_{t+1} - \alpha_t) D(\xb^*, \tilde{\xb}_{t+1}) \leq \sum_{t=1}^T (\alpha_{t+1} - \alpha_t) R^2 =  \alpha_{T+1} R^2 - \alpha_1 R^2 ,
\end{align}
where the inequality is by Assumption \ref{assump:bounded}.

Moreover, by the definition of $V_*(T)= \sum_{t=1}^T \max_{\xb\in \cX_0}\| \nabla f^{t-1}(\xb) - \nabla f^t(\xb)\|^2_*$, we have
\begin{align}\label{eq:regret_variation}
\frac{\eta}{2} \sum_{t=1}^T  \| \nabla f^{t-1}(\xb_t) - \nabla f^t(\xb_t)\|^2_* \leq \frac{\eta}{2}  V_*(T).
\end{align}
In addition, we bound the term $U_1 - U_{T+1}$ by
\begin{align}
\begin{aligned} \label{eq:regret_V_diff}
U_1 - U_{T+1} &= \l(\xi_1+ \eta L_f^2\r) \|\xb_0 - \tilde{\xb}_1\|^2 + \alpha_1 D(\xb^*, \tilde{\xb}_1) - \frac{\gamma^2}{2} \|\gb(\xb_0)\|_2^2 \\
&\quad -\l(\xi_{T+1}+ \eta L_f^2\r) \|\xb_T - \tilde{\xb}_{T+1}\|^2 - \alpha_{T+1} D(\xb^*, \tilde{\xb}_{T+1}) + \frac{\gamma^2}{2} \|\gb(\xb_T)\|_2^2\\
&\stackrel{\textcircled{1}}{\leq} \l(\xi_1+ \eta L_f^2\r)  \|\xb_0 - \tilde{\xb}_1\|^2 + \alpha_1 D(\xb^*, \tilde{\xb}_1)  + \frac{\gamma^2}{2} \|\gb(\xb_T)\|_2^2\\
&\stackrel{\textcircled{2}}{\leq} \frac{2}{\rho}[ \gamma L_g  \| \Qb(1)\|_1 + \gamma^2 (L_g G + H^2)+ \eta L_f^2 ]  R^2 + \alpha_1 R^2 + \frac{\gamma^2}{2} G^2   \\
&\stackrel{\textcircled{3}}{\leq} \frac{2}{\rho} [\eta L_f^2 +  \gamma^2 (2L_g G + H^2)]   R^2  +  \alpha_1 R^2 + \frac{\gamma^2G^2}{2}, 
\end{aligned}
\end{align}
where \textcircled{1} is by removing negative terms, \textcircled{2} is by $\|\xb_0 - \tilde{\xb}_1\|^2 \leq 2 R^2 /\rho$ and $D(\xb^*, \tilde{\xb}_1) \leq  R^2$ as well as $\|\gb(\xb_T)\|_2^2 \leq \|\gb(\xb_T)\|_1^2 \leq G^2$ according to Assumptions \ref{assump:global} and \ref{assump:bounded}, and \textcircled{3} is by (4) of Lemma \ref{lem:Q_linear} and $\Qb(0)= \boldsymbol{0}$.

Therefore, combining 
\eqref{eq:regret_alpha_sum} \eqref{eq:regret_variation} \eqref{eq:regret_V_diff} and \eqref{eq:regret_al}, further by $ \|\Qb(1)\|_2^2 = \gamma^2 \|\gb(\xb_0)\|_2^2 \leq \gamma^2 \|\gb(\xb_0)\|_1^2 \leq \gamma^2 G^2$, we have
\begin{align*}
\Regret(T) &\leq \frac{\eta}{2}  V_*(T) + \frac{2}{\rho} [\eta L_f^2 +  \gamma^2 (2L_g G + H^2)]   R^2  +  \alpha_{T+1} R^2 + \frac{3\gamma^2G^2}{2} \\
&\leq \frac{\eta}{2}  V_*(T) + \frac{2R^2}{\rho} L_f^2 \eta + \l( \frac{2 (2L_g G + H^2) R^2 }{\rho} + \frac{3G^2}{2} \r)  \gamma^2 +  \alpha_{T+1} R^2.
\end{align*} 
This completes the proof.
\end{proof}

\subsection{Proof of Lemma \ref{lem:constr_al}}\label{sec:proof_constr_al}
\begin{proof}
According to the updating rule of $Q_k(t)$ in Algorithm \ref{alg:ompd}, we have
\begin{align*}
Q_k(t+1) &= \max \{ - \gamma g_k(\xb_t), ~Q_k(t) + \gamma g_k (\xb_{t}) \} \\
&\geq  Q_k(t) + \gamma g_k (\xb_{t}),
\end{align*}
which thus leads to
\begin{align*}
\gamma g_k (\xb_{t}) \leq Q_k(t+1) - Q_k(t).
\end{align*}
Taking summation over $t = 1\ldots,T$ and multiplying $\gamma^{-1}$ on both sides yields
\begin{align*}
\sum_{t=1}^T g_k (\xb_{t}) \leq \frac{1}{\gamma}[Q_k(T+1) - Q_k(1)] \leq  \frac{1}{\gamma}Q_k(T+1) \leq \frac{1}{\gamma} \|\Qb(T+1)\|_2,
\end{align*}
where the second inequality is due to $Q_k(1) \geq 0$ as shown in Lemma \ref{lem:Q_linear}. This completes the proof.
\end{proof}

\section{Proofs for Section \ref{sec:extension}} \label{sec:proof_extension}

\subsection{Lemmas for Section \ref{sec:extension}}

\begin{lemma} \label{lem:mixing} The mixing step in Algorithm \ref{alg:ompd_simplex}, i.e., $\tilde{\yb}_t = (1-\nu)\tilde{\xb}_t + \nu/d\cdot \boldsymbol{1}$ with $\nu \in (0, 1]$, ensures the following inequalities
\begin{align*}
&D_{\KL} (\zb, \tilde{\yb}_t) - D_{\KL} (\zb, \tilde{\xb}_t) \leq \nu \log d, ~~~\forall \zb \in \Delta, \\
&D_{\KL} (\zb, \tilde{\yb}_t) \leq \log\frac{d}{\nu}, ~~~\forall \zb \in \Delta,\\
&\|\tilde{\yb}_t - \tilde{\xb}_t\|_1 \leq 2\nu.
\end{align*}
\end{lemma}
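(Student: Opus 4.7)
The plan is to handle the three claims in reverse order, since they build in conceptual difficulty. The last one is the simplest: writing $\tilde{\yb}_t - \tilde{\xb}_t = \nu(\boldsymbol{1}/d - \tilde{\xb}_t)$ directly from the mixing identity, I would take the $\ell_1$ norm and apply the triangle inequality. Both $\boldsymbol{1}/d$ and $\tilde{\xb}_t$ lie in the simplex, so each has $\ell_1$ norm equal to $1$, yielding $\|\tilde{\yb}_t - \tilde{\xb}_t\|_1 \leq 2\nu$.

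For the second claim, I would split the KL divergence as $D_{\KL}(\zb, \tilde{\yb}_t) = \sum_i \zb_i \log \zb_i - \sum_i \zb_i \log \tilde{\yb}_{t,i}$. The first summand is the negative entropy of $\zb$, which is non-positive on the simplex. For the second summand, the mixing guarantees a uniform lower bound $\tilde{\yb}_{t,i} \geq \nu/d$, so $-\log \tilde{\yb}_{t,i} \leq \log(d/\nu)$; summing against the probability vector $\zb$ gives exactly the claimed bound $\log(d/\nu)$.

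The first claim is the real point of the lemma and the part requiring a small idea. Rewriting the difference as $D_{\KL}(\zb, \tilde{\yb}_t) - D_{\KL}(\zb, \tilde{\xb}_t) = \sum_i \zb_i \log(\tilde{\xb}_{t,i}/\tilde{\yb}_{t,i})$, I would exploit the fact that $\tilde{\yb}_{t,i}$ is a convex combination of $\tilde{\xb}_{t,i}$ and $1/d$ with weights $1-\nu$ and $\nu$. By concavity of $\log$, this yields $\log \tilde{\yb}_{t,i} \geq (1-\nu)\log \tilde{\xb}_{t,i} + \nu \log(1/d)$, hence $\log(\tilde{\xb}_{t,i}/\tilde{\yb}_{t,i}) \leq \nu \log(d \tilde{\xb}_{t,i}) \leq \nu \log d$, where the last step uses $\tilde{\xb}_{t,i} \leq 1$. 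Summing against $\zb \in \Delta$ gives the desired $\nu \log d$ bound.

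The main obstacle, such as it is, is to notice that the naive $\ell_\infty$-style bound $\log(\tilde{\xb}_{t,i}/\tilde{\yb}_{t,i}) \leq \log(1/(1-\nu))$ obtained from $\tilde{\yb}_{t,i} \geq (1-\nu)\tilde{\xb}_{t,i}$ is of the wrong order for the analysis; one needs to retain the comparison against $1/d$ coming from the uniform part of the mixture, which is precisely what the Jensen/concavity step above captures. Everything else is bookkeeping on the simplex.
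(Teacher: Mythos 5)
Your proof is correct. For the third inequality your argument is exactly the paper's: write $\tilde{\yb}_t - \tilde{\xb}_t = \nu(\boldsymbol{1}/d - \tilde{\xb}_t)$ and apply the triangle inequality with $\|\tilde{\xb}_t\|_1 = \|\boldsymbol{1}/d\|_1 = 1$. For the first two inequalities the paper gives no argument at all --- it simply cites Lemma 31 of \citet{wei2019online} --- so your contribution is to make the lemma self-contained. Your derivations are the standard ones and are sound: the uniform lower bound $\tilde{\yb}_{t,i} \geq \nu/d$ together with non-positivity of $\sum_i \zb_i \log \zb_i$ gives the $\log(d/\nu)$ bound, and the concavity step $\log \tilde{\yb}_{t,i} \geq (1-\nu)\log\tilde{\xb}_{t,i} + \nu\log(1/d)$ followed by $\tilde{\xb}_{t,i}\leq 1$ gives the $\nu\log d$ bound. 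You are also right that this Jensen step is the essential observation, since the crude bound $\tilde{\yb}_{t,i}\geq(1-\nu)\tilde{\xb}_{t,i}$ only yields $\log(1/(1-\nu))$, which does not vanish with $d$ fixed in the way the analysis needs. The only cosmetic caveat is the boundary case $\tilde{\xb}_{t,i}=0$ with $\zb_i>0$, where $D_{\KL}(\zb,\tilde{\xb}_t)=+\infty$; there the first inequality holds trivially (the difference is $-\infty$), and your term-by-term bound remains valid under the usual conventions, so nothing breaks.
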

\begin{proof} The proofs of the first two inequalities are immediate following Lemma 31 in \citet{wei2020online}. For the third inequality, we prove it as follows
\begin{align*}
\|\tilde{\yb}_t - \tilde{\xb}_t\|_1 = \nu \| \tilde{\xb}_t - \boldsymbol{1}/d\|_1 \leq \nu (\| \tilde{\xb}_t\|_1 + \|\boldsymbol{1}/d\|_1) = 2\nu,
\end{align*}
where the last equality is due to $\tilde{\xb}_t \in \Delta$. This completes the proof.
\end{proof}

\begin{lemma}\label{lem:dpp_bound_simplex} At the $t$-th round of Algorithm \ref{alg:ompd_simplex}, for any $\gamma > 0$, $\nu \in (0, 1]$ and any $\zb \in \Delta$, letting $\xi_t =   \gamma L_g  \|\Qb(t)\|_1 + \gamma^2 (L_g G + H^2)$, we have the following inequality 
\begin{align*}
&\frac{1}{2}\big[\|\Qb(t+1)\|_2^2 -\| \Qb(t) \|_2^2\big] + \langle \nabla f^{t-1}(\xb_{t-1}), \xb_t \rangle + \alpha_t  D_{\KL}( \xb_t,  \tilde{\yb}_t ) \\
& \leq \frac{\xi_t}{2}  \|\xb_t - \xb_{t-1}\|^2 +   \frac{\gamma^2 }{2}
\big[ \|\gb(\xb_t)\|_2^2 - \|\gb(\xb_{t-1})\|_2^2\big]  + \langle \nabla f^t(\xb_t), \zb \rangle + \alpha_t D_{\KL}(\zb, \tilde{\yb}_t)   - \alpha_t D_{\KL}(\zb, \tilde{\yb}_{t+1})  \\
& \quad + \langle \nabla f^{t-1}(\xb_{t-1}) - \nabla f^t(\xb_t), \tilde{\xb}_{t+1} \rangle - \alpha_t D_{\KL}(\tilde{\xb}_{t+1}, \xb_t) + \gamma  \langle \Qb(t) + \gamma \gb(\xb_{t-1}), \gb(\zb) \rangle + \alpha_t \nu\log d.
\end{align*}
\end{lemma}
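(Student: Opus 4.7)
The plan is to mirror the proof of Lemma \ref{lem:dpp_bound} almost verbatim, using the mixed iterate $\tilde{\yb}_t$ as the proximal center wherever $\tilde{\xb}_t$ appeared in the Euclidean/general analysis. Note that both the $\xb_t$ update and the $\tilde{\xb}_{t+1}$ update in Algorithm \ref{alg:ompd_simplex} use the same proximal center $\tilde{\yb}_t$, and the domain is the probability simplex $\Delta$, so Lemma \ref{lem:pushback} applies directly with $M = \cX = \Delta$ and $D = D_{\KL}$.

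First, I would start from the raw drift-plus-penalty term at $\xb = \xb_t$ and bound the drift $\tfrac{1}{2}[\|\Qb(t+1)\|_2^2 - \|\Qb(t)\|_2^2]$ by $\gamma\langle \Qb(t), \gb(\xb_t)\rangle + \gamma^2 \|\gb(\xb_t)\|_2^2$ using Lemma \ref{lem:Q_linear}(b). Next, I would apply Lemma \ref{lem:pushback} to the $\xb_t$ update with $\xb^{opt}=\xb_t$, $\yb=\tilde{\yb}_t$, $\zb=\tilde{\xb}_{t+1}$, and $\eta=\alpha_t$ to trade $\langle \nabla f^{t-1}(\xb_{t-1}), \xb_t\rangle + \alpha_t D_{\KL}(\xb_t,\tilde{\yb}_t)$ for terms evaluated at $\tilde{\xb}_{t+1}$, picking up $\alpha_t D_{\KL}(\tilde{\xb}_{t+1},\tilde{\yb}_t) - \alpha_t D_{\KL}(\tilde{\xb}_{t+1}, \xb_t)$. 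Then apply Lemma \ref{lem:pushback} a second time to the $\tilde{\xb}_{t+1}$ update with $\xb^{opt} = \tilde{\xb}_{t+1}$, $\yb = \tilde{\yb}_t$, and an arbitrary $\zb \in \Delta$, which cancels the $D_{\KL}(\tilde{\xb}_{t+1},\tilde{\yb}_t)$ term and brings in $\alpha_t D_{\KL}(\zb,\tilde{\yb}_t) - \alpha_t D_{\KL}(\zb,\tilde{\xb}_{t+1})$, together with the $\langle \nabla f^t(\xb_t),\zb\rangle$ contribution after writing $\nabla f^{t-1}(\xb_{t-1}) = [\nabla f^{t-1}(\xb_{t-1}) - \nabla f^t(\xb_t)] + \nabla f^t(\xb_t)$.

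The constraint-function bookkeeping is identical to the proof of Lemma \ref{lem:dpp_bound}: I would rewrite $\gamma\langle \Qb(t),\gb(\xb_t)\rangle$ as $\gamma\langle \Qb(t)+\gamma \gb(\xb_{t-1}),g(\xb_t)\rangle - \gamma^2 \langle \gb(\xb_{t-1}), \gb(\xb_t)\rangle$, use $L_g$-smoothness of $g_k$ to peel off $g_k(\xb_t) \le g_k(\xb_{t-1}) + \langle\nabla g_k(\xb_{t-1}),\xb_t-\xb_{t-1}\rangle + (L_g/2)\|\xb_t-\xb_{t-1}\|^2$, and exploit the nonnegativity $Q_k(t)+\gamma g_k(\xb_{t-1})\geq 0$ together with convexity of $g_k$ to replace $g_k(\xb_{t-1}) + \langle\nabla g_k(\xb_{t-1}),\zb-\xb_{t-1}\rangle$ by $g_k(\zb)$. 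The $H_k$-Lipschitz bound on $g_k$ and boundedness $\sum_k|g_k|\le G$ then contribute the $(\gamma^2/2)[\|\gb(\xb_t)\|_2^2 - \|\gb(\xb_{t-1})\|_2^2] + (\gamma^2 H^2/2)\|\xb_t-\xb_{t-1}\|^2$ piece, exactly as in \eqref{eq:term_Qg3}, giving the desired coefficient $\xi_t/2$ on $\|\xb_t-\xb_{t-1}\|^2$ since $\sum_k Q_k(t) = \|\Qb(t)\|_1$.

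The only genuinely new step, and hence the source of the extra $\alpha_t \nu \log d$ slack, is the final conversion from $-\alpha_t D_{\KL}(\zb,\tilde{\xb}_{t+1})$ (which is what the second pushback naturally outputs) to $-\alpha_t D_{\KL}(\zb,\tilde{\yb}_{t+1})$ (which is what the statement requires so that the term telescopes across rounds, since the next round's proximal center is $\tilde{\yb}_{t+1}$). Here I would invoke the first inequality of Lemma \ref{lem:mixing} in the form $-D_{\KL}(\zb,\tilde{\xb}_{t+1}) \le -D_{\KL}(\zb,\tilde{\yb}_{t+1}) + \nu\log d$, multiplied by $\alpha_t$. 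I do not expect a serious obstacle beyond the bookkeeping: the mixing step changes only the proximal center, not the strong-convexity modulus of $D_{\KL}$ (which is $\rho=1$ w.r.t.\ $\|\cdot\|_1$ by Pinsker), so all Lipschitz/smoothness estimates transfer unchanged, and the $\alpha_t\nu\log d$ surplus is the clean price paid for making the divergence term telescopable in later lemmas.
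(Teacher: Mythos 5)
Your proposal is correct and follows essentially the same route as the paper: rerun the two pushback (Lemma \ref{lem:pushback}) applications from the proof of Lemma \ref{lem:dpp_bound} with the proximal center $\tilde{\xb}_t$ replaced by $\tilde{\yb}_t$, keep the constraint bookkeeping unchanged, and then convert $-\alpha_t D_{\KL}(\zb,\tilde{\xb}_{t+1})$ into $-\alpha_t D_{\KL}(\zb,\tilde{\yb}_{t+1}) + \alpha_t\nu\log d$ via the first inequality of Lemma \ref{lem:mixing}. You correctly identify that last step as the only genuinely new ingredient and the sole source of the $\alpha_t\nu\log d$ slack, which is exactly how the paper argues.
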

\begin{proof} This lemma is proved by modifying the proof of Lemma \ref{lem:dpp_bound} in Section \ref{sec:proof_dpp_bound}. More specifically, when applying Lemma \ref{lem:pushback}, we need to replace $D(\cdot, \tilde{\xb}_t)$ in Section \ref{sec:proof_dpp_bound} with $D_{\KL}(\cdot, \tilde{\yb}_t)$. Then, the rest of the proof is similar to the proof of Lemma \ref{lem:dpp_bound}. Therefore, we have
\begin{align*}
&\frac{1}{2}\big[\|\Qb(t+1)\|_2^2 -\| \Qb(t) \|_2^2\big] + \langle \nabla f^{t-1}(\xb_{t-1}), \xb_t \rangle + \alpha_t  D_{\KL}( \xb_t,  \tilde{\yb}_t ) \\
& \leq \frac{\xi_t}{2}  \|\xb_t - \xb_{t-1}\|^2 +   \frac{\gamma^2 }{2}
\big[ \|\gb(\xb_t)\|_2^2 - \|\gb(\xb_{t-1})\|_2^2\big]  + \langle \nabla f^t(\xb_t), \zb \rangle + \alpha_t D_{\KL}(\zb, \tilde{\yb}_t)   - \alpha_t D_{\KL}(\zb, \tilde{\xb}_{t+1})  \\
& \quad + \langle \nabla f^{t-1}(\xb_{t-1}) - \nabla f^t(\xb_t), \tilde{\xb}_{t+1} \rangle - \alpha_t D_{\KL}(\tilde{\xb}_{t+1}, \xb_t) + \gamma  \langle \Qb(t) + \gamma \gb(\xb_{t-1}), \gb(\zb) \rangle.
\end{align*}
Furthermore, due to Lemma \ref{lem:mixing}, we know 
\begin{align*}
 - \alpha_t D_{\KL}(\zb, \tilde{\xb}_{t+1})  \leq  - \alpha_t D_{\KL}(\zb, \tilde{\yb}_{t+1}) + \alpha_t \nu \log d.
\end{align*}
Thus, we have
\begin{align*}
&\frac{1}{2}\big[\|\Qb(t+1)\|_2^2 -\| \Qb(t) \|_2^2\big] + \langle \nabla f^{t-1}(\xb_{t-1}), \xb_t \rangle + \alpha_t  D_{\KL}( \xb_t,  \tilde{\yb}_t ) \\
& \leq \frac{\xi_t}{2}  \|\xb_t - \xb_{t-1}\|_1^2 +   \frac{\gamma^2 }{2}
\big[ \|\gb(\xb_t)\|_2^2 - \|\gb(\xb_{t-1})\|_2^2\big]  + \langle \nabla f^t(\xb_t), \zb \rangle + \alpha_t D_{\KL}(\zb, \tilde{\yb}_t)   - \alpha_t D_{\KL}(\zb, \tilde{\yb}_{t+1})  \\
& \quad + \langle \nabla f^{t-1}(\xb_{t-1}) - \nabla f^t(\xb_t), \tilde{\xb}_{t+1} \rangle - \alpha_t D_{\KL}(\tilde{\xb}_{t+1}, \xb_t) + \gamma  \langle \Qb(t) + \gamma \gb(\xb_{t-1}), \gb(\zb) \rangle + \alpha_t \nu \log d.
\end{align*}
This completes the proof.
\end{proof} 

Lemma \ref{lem:dpp_bound_simplex} further leads to the following lemma. 

\begin{lemma}\label{lem:dpp_bound_new_simplex} At the $t$-th round of Algorithm \ref{alg:ompd_simplex}, for any $\eta, \gamma > 0$, $\nu \in (0, 1]$, and any $\zb \in \Delta$, the following inequality holds 
\begin{align*}
&\frac{1}{2}\big[\|\Qb(t+1)\|_2^2 -\| \Qb(t) \|_2^2\big] + \langle \nabla f^t(\xb_t), \xb_t - \zb \rangle \\
&\leq \psi_t ( \|\xb_t - \tilde{\yb}_t\|_1^2 + \|\xb_t - \tilde{\xb}_{t+1}\|_1^2) + \frac{\eta}{2}\| \nabla f^{t-1}(\xb_t) - \nabla f^t(\xb_t)\|^2_\infty  \\
& \quad +  (\alpha_{t+1} - \alpha_t) D_{\KL}(\zb, \tilde{\yb}_{t+1}) + \gamma  \langle \Qb(t) + \gamma \gb(\xb_{t-1}), \gb(\zb) \rangle\\
& \quad +  6\l( \xi_t + \eta L^2_f \r)  \nu^2 + \alpha_t \nu \log d +   W_t - W_{t+1},
\end{align*}
where we define 
\begin{align*}
&\psi_t :=  3/2\cdot (\xi_t+ \eta L_f^2+  \gamma^2 L_g G  ) -  \alpha_t/2 + 1/\eta, \\
&W_t :=  3/2\cdot \l(\xi_t+ \eta L_f^2\r) \|\xb_{t-1} - \tilde{\xb}_t\|_1^2 + \alpha_t D_{\KL}(\zb, \tilde{\yb}_t) - \gamma^2/2 \cdot \|\gb(\xb_{t-1})\|_2^2.
\end{align*}
\end{lemma}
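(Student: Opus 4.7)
The plan is to follow the same template as the proof of Lemma \ref{lem:dpp_bound_new} in Section \ref{sec:proof_dpp_bound_new}, starting from Lemma \ref{lem:dpp_bound_simplex} in place of Lemma \ref{lem:dpp_bound}, but with three adjustments for the simplex setting: the Bregman divergence is anchored at $\tilde{\yb}_t$ rather than $\tilde{\xb}_t$; the strong-convexity modulus is $\rho = 1$ w.r.t.\ the $\ell_1$ norm (via Pinsker's inequality); and every time I need to relate $\xb_t$ or $\xb_{t-1}$ to $\tilde{\xb}_t$ I must reroute through $\tilde{\yb}_t$, paying a mismatch bounded by $\|\tilde{\yb}_t - \tilde{\xb}_t\|_1 \leq 2\nu$ from Lemma \ref{lem:mixing}.

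First I would rearrange the inequality of Lemma \ref{lem:dpp_bound_simplex} by subtracting $\alpha_t D_{\KL}(\xb_t, \tilde{\yb}_t)$, $\langle \nabla f^t(\xb_t), \zb\rangle$, and $\langle \nabla f^{t-1}(\xb_{t-1}) - \nabla f^t(\xb_t), \xb_t\rangle$ from both sides, so that $\langle \nabla f^t(\xb_t), \xb_t - \zb\rangle$ appears on the LHS. Then Pinsker's inequality supplies $-\alpha_t D_{\KL}(\xb_t, \tilde{\yb}_t) \leq -(\alpha_t/2)\|\xb_t - \tilde{\yb}_t\|_1^2$ and $-\alpha_t D_{\KL}(\tilde{\xb}_{t+1}, \xb_t) \leq -(\alpha_t/2)\|\xb_t - \tilde{\xb}_{t+1}\|_1^2$, producing the negative squared-$\ell_1$ contributions that will later offset the $\psi_t$ terms.

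The key new ingredient is the triangle-inequality detour. By $\|\tilde{\yb}_t - \tilde{\xb}_t\|_1 \leq 2\nu$ and $(a+b+c)^2 \leq 3(a^2+b^2+c^2)$, I get $\|\xb_t - \xb_{t-1}\|_1^2 \leq 3\|\xb_t - \tilde{\yb}_t\|_1^2 + 12\nu^2 + 3\|\tilde{\xb}_t - \xb_{t-1}\|_1^2$, so that $(\xi_t/2)\|\xb_t - \xb_{t-1}\|_1^2$ contributes $3\xi_t/2$ in front of both $\|\xb_t - \tilde{\yb}_t\|_1^2$ and $\|\xb_{t-1} - \tilde{\xb}_t\|_1^2$, plus an additive $6\xi_t\nu^2$. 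The same detour applied inside the Young's-inequality bound of $\langle \nabla f^{t-1}(\xb_{t-1}) - \nabla f^t(\xb_t), \tilde{\xb}_{t+1} - \xb_t\rangle$---split as in the proof of Lemma \ref{lem:dpp_bound_new} and using the Lipschitz constant $L_f$ of $\nabla f^{t-1}$ in the dual $\ell_\infty$ norm---contributes $3\eta L_f^2/2$ in front of both $\|\xb_{t-1} - \tilde{\xb}_t\|_1^2$ and $\|\xb_t - \tilde{\yb}_t\|_1^2$, an extra $6\eta L_f^2 \nu^2$, together with $\eta/2 \cdot \|\nabla f^{t-1}(\xb_t) - \nabla f^t(\xb_t)\|_\infty^2$ and $1/\eta \cdot \|\xb_t - \tilde{\xb}_{t+1}\|_1^2$.

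Finally, the telescoping bookkeeping mirrors the Euclidean case. I split $\alpha_t D_{\KL}(\zb, \tilde{\yb}_t) - \alpha_t D_{\KL}(\zb, \tilde{\yb}_{t+1}) = [\alpha_t D_{\KL}(\zb, \tilde{\yb}_t) - \alpha_{t+1} D_{\KL}(\zb, \tilde{\yb}_{t+1})] + (\alpha_{t+1}-\alpha_t) D_{\KL}(\zb, \tilde{\yb}_{t+1})$, rewrite the $\|\xb_t - \tilde{\xb}_{t+1}\|_1^2$ contribution by adding and subtracting $\tfrac{3}{2}(\xi_{t+1}+\eta L_f^2)\|\xb_t - \tilde{\xb}_{t+1}\|_1^2$ so that the $-W_{t+1}$ piece appears cleanly, and apply Lemma \ref{lem:Q_linear}(d) to get $\xi_{t+1} - \xi_t \leq \gamma^2 L_g G$, so that the residual coefficient $\tfrac{3}{2}(\xi_{t+1}+\eta L_f^2) - \alpha_t/2 + 1/\eta$ is upper-bounded by $\psi_t$. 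The $\|\xb_t - \tilde{\yb}_t\|_1^2$ coefficient $\tfrac{3}{2}(\xi_t + \eta L_f^2) - \alpha_t/2$ is likewise $\leq \psi_t$, and the $\gamma^2/2 [\|\gb(\xb_t)\|_2^2 - \|\gb(\xb_{t-1})\|_2^2]$ term folds into $W_t - W_{t+1}$ via the $-\gamma^2/2 \|\gb(\xb_{t-1})\|_2^2$ summand of $W_t$. Collecting everything yields the lemma. The main obstacle is keeping the $\nu^2$ coefficient tight at $6(\xi_t + \eta L_f^2)$: every detour through $\tilde{\yb}_t$ inflates a squared triangle inequality by a factor of three, so the two sources of $\nu^2$ (one from bounding $\|\xb_t - \xb_{t-1}\|_1^2$ directly, one from the $L_f$-Lipschitz cross-term) must be squared separately at the outermost level, rather than combined pointwise first, to avoid accumulating extra constants.
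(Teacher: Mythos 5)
Your proposal is correct and follows essentially the same route as the paper's proof: start from Lemma \ref{lem:dpp_bound_simplex}, use Pinsker's inequality for the modulus-$1$ strong convexity w.r.t.\ $\|\cdot\|_1$, reroute $\|\xb_t-\xb_{t-1}\|_1^2$ through $\tilde{\yb}_t$ with the factor-$3$ triangle inequality and $\|\tilde{\yb}_t-\tilde{\xb}_t\|_1\leq 2\nu$, and telescope via $W_t$ using $\xi_{t+1}-\xi_t\leq \gamma^2 L_g G$ from Lemma \ref{lem:Q_linear}(d). The only cosmetic difference is that the paper first merges the $\tfrac{\xi_t}{2}$ and $\tfrac{\eta L_f^2}{2}$ coefficients of $\|\xb_t-\xb_{t-1}\|_1^2$ before applying the detour once, whereas you apply it to each source separately; by linearity both yield the same $6(\xi_t+\eta L_f^2)\nu^2$, so your closing caveat about needing to keep them separate is unnecessary but harmless.
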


\begin{proof} The proof mainly follows the proof of Lemma \ref{lem:dpp_bound_new} in Section \ref{sec:proof_dpp_bound_new}. We start our proof with the result of Lemma \ref{lem:dpp_bound_simplex}
\begin{align*}
&\frac{1}{2}\big[\|\Qb(t+1)\|_2^2 -\| \Qb(t) \|_2^2\big] + \langle \nabla f^{t-1}(\xb_{t-1}), \xb_t \rangle + \alpha_t  D_{\KL}( \xb_t,  \tilde{\yb}_t ) \\
&\qquad  \leq \frac{\xi_t}{2}  \|\xb_t - \xb_{t-1}\|^2 +   \frac{\gamma^2 }{2}
\big[ \|\gb(\xb_t)\|_2^2 - \|\gb(\xb_{t-1})\|_2^2\big]  + \langle \nabla f^t(\xb_t), \zb \rangle \\
&\qquad \quad + \alpha_t D_{\KL}(\zb, \tilde{\yb}_t)   - \alpha_t D_{\KL}(\zb, \tilde{\yb}_{t+1}) + \langle \nabla f^{t-1}(\xb_{t-1}) - \nabla f^t(\xb_t), \tilde{\xb}_{t+1} \rangle \\
&\qquad \quad  - \alpha_t D_{\KL}(\tilde{\xb}_{t+1}, \xb_t) + \gamma  \langle \Qb(t) + \gamma \gb(\xb_{t-1}), \gb(\zb) \rangle + \alpha_t \nu \log d.
\end{align*}
Due to $ D(\tilde{\xb}_{t+1}, \xb_t)  \geq \|\xb_t-\tilde{\xb}_{t+1}\|_1^2/2$  and $D( \xb_t,  \tilde{\yb}_t ) \geq \|\xb_t-\tilde{\yb}_t\|_1^2/2 $, we have
\begin{align}
\begin{aligned} \label{eq:decomp_var_2}
&\frac{1}{2}\big[\|\Qb(t+1)\|_2^2 -\| \Qb(t) \|_2^2\big] + \langle \nabla f^t(\xb_t), \xb_t - \zb \rangle \\
&\qquad \leq \frac{\xi_t}{2}  \|\xb_t - \xb_{t-1}\|_1^2 +   \frac{\gamma^2 }{2}
\big[ \|\gb(\xb_t)\|_2^2 - \|\gb(\xb_{t-1})\|_2^2\big]  + \alpha_t D_{\KL}(\zb, \tilde{\yb}_t)   \\
&\qquad \quad - \alpha_t D_{\KL}(\zb, \tilde{\yb}_{t+1}) - \frac{\alpha_t}{2}  \|\xb_t-\tilde{\yb}_t\|_1^2- \frac{\alpha_t}{2}  \|\xb_t-\tilde{\xb}_{t+1}\|_1^2 + \alpha_t \nu \log d  \\
&\qquad \quad+ \langle \nabla f^{t-1}(\xb_{t-1}) - \nabla f^t(\xb_t), \tilde{\xb}_{t+1} - \xb_t \rangle + \gamma  \langle \Qb(t) + \gamma \gb(\xb_{t-1}), \gb(\zb) \rangle.
\end{aligned}
\end{align}
We bound the term $\alpha_t D_{\KL}(\zb, \tilde{\yb}_t) - \alpha_t D_{\KL}(\zb, \tilde{\yb}_{t+1})$ on the right-hand side of \eqref{eq:decomp_var_2} as follows
\begin{align*}
\alpha_t D_{\KL}(\zb, \tilde{\yb}_t) - \alpha_t D_{\KL}(\zb, \tilde{\yb}_{t+1}) = \alpha_t D_{\KL}(\zb, \tilde{\yb}_t) - \alpha_{t+1} D_{\KL}(\zb, \tilde{\yb}_{t+1}) +  (\alpha_{t+1} - \alpha_t) D_{\KL}(\zb, \tilde{\yb}_{t+1}) .
\end{align*}
Similar to the proof of Lemma \ref{lem:dpp_bound_new}, we bound the last term in \eqref{eq:decomp_var_2} as follows
\begin{align*}
&\langle \nabla f^{t-1}(\xb_{t-1}) - \nabla f^t(\xb_t), \tilde{\xb}_{t+1} - \xb_t \rangle\\
&\qquad = \langle \nabla f^{t-1}(\xb_{t-1}) - \nabla f^{t-1}(\xb_t), \tilde{\xb}_{t+1} - \xb_t \rangle + \langle \nabla f^{t-1}(\xb_t) - \nabla f^t(\xb_t), \tilde{\xb}_{t+1} - \xb_t \rangle\\
&\qquad \leq \|\nabla f^{t-1}(\xb_{t-1}) - \nabla f^{t-1}(\xb_t)\|_\infty \| \xb_t - \tilde{\xb}_{t+1}  \|_1 + \| \nabla f^{t-1}(\xb_t) - \nabla f^t(\xb_t)\|_\infty \| \xb_t - \tilde{\xb}_{t+1}  \|_1\\
&\qquad \leq  L_f \|\xb_{t-1} - \xb_t\|_1 \| \xb_t - \tilde{\xb}_{t+1}  \|_1 + \| \nabla f^{t-1}(\xb_t) - \nabla f^t(\xb_t)\|_\infty \| \xb_t - \tilde{\xb}_{t+1}  \|_1\\
&\qquad \leq \frac{\eta L^2_f}{2} \|\xb_{t-1} - \xb_t\|_1^2  + \frac{1}{\eta} \| \xb_t - \tilde{\xb}_{t+1} \|_1^2  + \frac{\eta}{2}\| \nabla f^{t-1}(\xb_t) - \nabla f^t(\xb_t)\|^2_\infty. 
\end{align*}
Combining the above inequalities with \eqref{eq:decomp_var_2} gives
\begin{align*}
&\frac{1}{2}\big[\|\Qb(t+1)\|_2^2 -\| \Qb(t) \|_2^2\big] + \langle \nabla f^t(\xb_t), \xb_t - \zb \rangle \\
&\quad \leq \frac{1}{2}\l( \xi_t + \eta L^2_f \r)  \|\xb_t - \xb_{t-1}\|_1^2 +   \frac{\gamma^2 }{2}
\big[ \|\gb(\xb_t)\|_2^2 - \|\gb(\xb_{t-1})\|_2^2\big]  + \alpha_t D_{\KL}(\zb, \tilde{\yb}_t)   \\
&\quad \quad - \alpha_{t+1} D_{\KL}(\zb, \tilde{\yb}_{t+1}) + (\alpha_{t+1} - \alpha_t) D_{\KL}(\zb, \tilde{\yb}_{t+1})- \frac{\alpha_t}{2}  \|\xb_t-\tilde{\yb}_t\|_1^2- \l(\frac{\alpha_t}{2}-\frac{1}{\eta}\r)  \|\xb_t-\tilde{\xb}_{t+1}\|_1^2   \\
&\quad \quad + \alpha_t \nu \log d  + \frac{\eta}{2} \| \nabla f^{t-1}(\xb_t) - \nabla f^t(\xb_t)\|^2_\infty  + \gamma  \langle \Qb(t) + \gamma \gb(\xb_{t-1}), \gb(\zb) \rangle .
\end{align*}
Further, we bound the term $\| \xb_t -\xb_{t-1} \|_1^2$ in the above inequality as follows
\begin{align*}
 \| \xb_t -\xb_{t-1} \|_1^2 &\leq 3 (\| \xb_t - \tilde{\yb}_t \|_1^2  + \| \tilde{\yb}_t - \tilde{\xb}_t \|_1^2  + \| \tilde{\xb}_t - \xb_{t-1}  \|_1^2 ) \leq 3 \| \xb_t - \tilde{\yb}_t \|_1^2  +  12 \nu^2 + 3\| \tilde{\xb}_t - \xb_{t-1}  \|_1^2,
\end{align*}
where the first inequality is due to $(a+ b+c)^2 \leq  3a^2 + 3b^2 + 3c^2$, and the second inequality is due to Lemma \ref{lem:mixing}. Thus, we have
\begin{align*}
&\frac{1}{2}\big[\|\Qb(t+1)\|_2^2 -\| \Qb(t) \|_2^2\big] + \langle \nabla f^t(\xb_t), \xb_t - \zb \rangle \\
& \leq \l[\frac{3}{2}\l( \xi_t + \eta L^2_f \r) - \frac{\alpha_t}{2} \r] \|\xb_t - \tilde{\yb}_t\|_1^2 +  \frac{3}{2}\l( \xi_t + \eta L^2_f \r)  \| \tilde{\xb}_t - \xb_{t-1}  \|_1^2 +  6\l( \xi_t + \eta L^2_f \r)  \nu^2 + \alpha_t \nu \log d\\
&  \quad +   \frac{\gamma^2 }{2}
\big[ \|\gb(\xb_t)\|_2^2 - \|\gb(\xb_{t-1})\|_2^2\big]  + \alpha_t D_{\KL}(\zb, \tilde{\yb}_t) - \alpha_{t+1} D_{\KL}(\zb, \tilde{\yb}_{t+1}) + (\alpha_{t+1} - \alpha_t) D_{\KL}(\zb, \tilde{\yb}_{t+1})   \\
& \quad - \l(\frac{\alpha_t}{2}-\frac{1}{\eta}\r)  \|\xb_t-\tilde{\xb}_{t+1}\|_1^2  + \frac{\eta}{2} \| \nabla f^{t-1}(\xb_t) - \nabla f^t(\xb_t)\|^2_\infty  + \gamma  \langle \Qb(t) + \gamma \gb(\xb_{t-1}), \gb(\zb) \rangle .
\end{align*}
Also note that we have
\begin{align*}
&\frac{3}{2}\l(\xi_t+ \eta L_f^2\r) \| \tilde{\xb}_t - \xb_{t-1}\|_1^2- \l( \frac{ \alpha_t}{2} -  \frac{1}{\eta}  \r)  \|\xb_t-\tilde{\xb}_{t+1}\|_1^2 \\
&\qquad = \frac{3}{2}\l(\xi_t+ \eta L_f^2\r) \| \tilde{\xb}_t - \xb_{t-1}\|_1^2 - \frac{3}{2} \l(\xi_{t+1}+ \eta L_f^2\r) \|\xb_t - \tilde{\xb}_{t+1}\|_1^2 \\
&\qquad \quad + \l(\frac{3}{2} (\xi_{t+1}+ \eta L_f^2) - \frac{ \alpha_t}{2} + \frac{1}{\eta}   \r) \|\xb_t - \tilde{\xb}_{t+1}\|_1^2.
\end{align*}
Thus, defining 
\begin{align*}
W_t := \frac{3}{2}\l(\xi_t+ \eta L_f^2\r) \|\xb_{t-1} - \tilde{\xb}_t\|_1^2 + \alpha_t D_{\KL}(\zb, \tilde{\yb}_t) - \frac{\gamma^2}{2} \|\gb(\xb_{t-1})\|_2^2,
\end{align*}
we eventually have
\begin{align*}
&\frac{1}{2}\big[\|\Qb(t+1)\|_2^2 -\| \Qb(t) \|_2^2\big] + \langle \nabla f^t(\xb_t), \xb_t - \zb \rangle \\
&\qquad\leq \l[ \frac{3}{2}(\xi_t  + \eta L_f^2) - \frac{ \alpha_t }{2} \r] \|\xb_t - \tilde{\yb}_t\|_1^2 +\l[\frac{3}{2}(\xi_{t+1}+ \eta L_f^2) - \frac{ \alpha_t}{2} + \frac{1}{\eta}   \r] \|\xb_t - \tilde{\xb}_{t+1}\|_1^2   \\
&\qquad \quad + \frac{\eta}{2}\| \nabla f^{t-1}(\xb_t) - \nabla f^t(\xb_t)\|^2_\infty +  (\alpha_{t+1} - \alpha_t) D_{\KL}(\zb, \tilde{\yb}_{t+1}) + \gamma  \langle \Qb(t) + \gamma \gb(\xb_{t-1}), \gb(\zb) \rangle\\
&\qquad \quad  +  6\l( \xi_t + \eta L^2_f \r)  \nu^2 + \alpha_t \nu \log d +  W_t - W_{t+1}.
\end{align*}
Here we also have
\begin{align*}
\frac{3}{2}(\xi_{t+1}+ \eta L_f^2) - \frac{ \alpha_t}{2} + \frac{1}{\eta}  &= \frac{3}{2} \gamma L_g  \| \Qb(t+1)\|_1 + \frac{3}{2}\gamma^2 (L_g G + H^2) + \frac{3}{2}\eta L_f^2 - \frac{ \alpha_t}{2} + \frac{1}{\eta} \\
& \stackrel{\textcircled{1}}{ \leq} \frac{3}{2}\gamma L_g ( \| \Qb(t)\|_1 +\gamma \|\gb(t)\|_1) +   \frac{3}{2}\gamma^2 (L_g G + H^2) +  \frac{3}{2}\eta L_f^2 - \frac{ \alpha_t}{2} + \frac{1}{\eta}\\
& \stackrel{\textcircled{2}}{ \leq}  \frac{3}{2}\gamma L_g  \| \Qb(t)\|_1   +  \frac{3}{2}\gamma^2 L_g G+  \frac{3}{2}\gamma^2 (L_g G + H^2) +  \frac{3}{2}\eta L_f^2 - \frac{ \alpha_t}{2} + \frac{1}{\eta}\\
& = \frac{3}{2} \xi_t   +  \frac{3}{2}\eta L_f^2 - \frac{ \alpha_t}{2} + \frac{1}{\eta} +  \frac{3}{2}\gamma^2 L_g G,
\end{align*}
where \textcircled{1} is due to $|\|\Qb(t+1)\|_1 - \|\Qb(t)\|_1 | \leq \gamma \|\gb(\xb_t)\|_1$ as in Lemma \ref{lem:Q_linear}, and \textcircled{2} is by $\|\gb(\xb_t)\|_1 \leq G$ as in Assumption \ref{assump:global}. Thus, we have
\begin{align*}
&\frac{1}{2}\big[\|\Qb(t+1)\|_2^2 -\| \Qb(t) \|_2^2\big] + \langle \nabla f^t(\xb_t), \xb_t - \zb \rangle \\
&\qquad \leq \l[\frac{3}{2}(\xi_t+ \eta L_f^2+  \gamma^2 L_g G  ) - \frac{ \alpha_t}{2} + \frac{1}{\eta} \r] ( \|\xb_t - \tilde{\yb}_t\|_1^2 + \|\xb_t - \tilde{\xb}_{t+1}\|_1^2)  \\
&\qquad \quad + \frac{\eta}{2}\| \nabla f^{t-1}(\xb_t) - \nabla f^t(\xb_t)\|^2_\infty +  (\alpha_{t+1} - \alpha_t) D_{\KL}(\zb, \tilde{\yb}_{t+1}) + \gamma  \langle \Qb(t) + \gamma \gb(\xb_{t-1}), \gb(\zb) \rangle\\
&\qquad \quad +  6\l( \xi_t + \eta L^2_f \r)  \nu^2 + \alpha_t \nu \log d +   W_t - W_{t+1}  .
\end{align*}
This completes the proof.
\end{proof}



With the above lemma, we are ready to show the bounds for drifts of the dual variable $\Qb(T+1)$ and the step size $\alpha_{T+1}$ in the following lemma. 

\begin{lemma}\label{lem:Q_bound_simplex} With setting $\eta,\gamma$, $\nu$, and $\alpha_{t}$ the same as in Theorem \ref{thm:simplex_regret}, for $T > 2$ and $d \geq 1$, Algorithm \ref{alg:ompd_simplex} ensures
\small
\begin{align*}
&\alpha_{T+1} \leq \Big( \tilde{C}_1 + \frac{\tilde{C}_2}{\varsigma} \Big) \sqrt{V_\infty(T) + L_f^2 +1} \cdot \log (Td)  + \frac{\tilde{C}_3}{\varsigma}, \\
&\|\Qb(T+1)\|_2 \leq \Big( \tilde{C}'_1 + \frac{\tilde{C}'_2}{\varsigma} \Big) (V_\infty(T) + L_f^2 +1)^{\frac{1}{4}} \log (Td) + \frac{\tilde{C}'_3}{ \varsigma},
\end{align*}
\normalsize
where $\tilde{C}_1$, $\tilde{C}_2$,$\tilde{C}_3$, $\tilde{C}'_1$, $\tilde{C}'_2$, and $\tilde{C}'_3$ are absolute constants that are $\textnormal{\texttt{poly}}(H, F, G, L_g, K)$.
\end{lemma}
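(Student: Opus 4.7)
The plan is to adapt the proof of Lemma \ref{lem:Q_bound} from the Euclidean case to the probability simplex setting, tracking the extra $\log(Td)$ and $\nu$-dependent terms introduced by the mixing step. Starting point: sum the inequality in Lemma \ref{lem:dpp_bound_new_simplex} over $\delta$ consecutive rounds beginning at some $t$, with $\zb=\check{\xb}$ from Assumption \ref{assump:slater}. The telescoping of $W_t$ and of $\tfrac{1}{2}\|\Qb(\tau+1)\|_2^2 - \tfrac{1}{2}\|\Qb(\tau)\|_2^2$ is preserved verbatim, and the Slater-based argument yielding
\begin{align*}
\gamma\sum_{\tau=t}^{t+\delta-1}\langle \Qb(\tau)+\gamma\gb(\xb_{\tau-1}),\gb(\check{\xb})\rangle \leq -\varsigma\gamma\delta\|\Qb(t)\|_1 + \tfrac{1}{2}\varsigma\gamma^2\delta^2 G + \varsigma\gamma^2\delta G
\end{align*}
carries over exactly, using part (d) of Lemma \ref{lem:Q_linear} and $\Qb(t)\geq 0$.

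Second, the setting $\alpha_t = \max\{3(\eta L_f^2+\gamma^2 L_g G) + 2/\eta + 3\xi_t,\ \alpha_{t-1}\}$ in Theorem \ref{thm:simplex_regret} is calibrated so that $\psi_\tau = \tfrac{3}{2}(\xi_\tau+\eta L_f^2+\gamma^2 L_g G) - \alpha_\tau/2 + 1/\eta \leq 0$ after unrolling the recursion (analogously to equation (A.10) in the Euclidean proof), which kills the $\psi_\tau(\|\xb_\tau-\tilde{\yb}_\tau\|_1^2+\|\xb_\tau-\tilde{\xb}_{\tau+1}\|_1^2)$ contribution. Two new ingredients appear: (i) the KL bound from Lemma \ref{lem:mixing} gives $D_{\KL}(\check{\xb},\tilde{\yb}_{\tau+1})\leq \log(d/\nu) = \log(Td)$, which plays the role that $R^2$ played in the Euclidean proof — in particular, $\sum_{\tau=t}^{t+\delta-1}(\alpha_{\tau+1}-\alpha_\tau)D_{\KL}(\check{\xb},\tilde{\yb}_{\tau+1}) \leq (\alpha_{t+\delta}-\alpha_t)\log(Td)$; (ii) the extra additive contributions $6(\xi_\tau+\eta L_f^2)\nu^2$ and $\alpha_\tau\nu\log d$. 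With $\nu = 1/T$ and $\delta \ll T$ (ultimately we set $\delta$ of logarithmic scale), $\sum_\tau 6(\xi_\tau+\eta L_f^2)\nu^2 \leq 6\delta(\xi_\tau+\eta L_f^2)/T^2$ is absorbed into a constant, and $\sum_\tau \alpha_\tau\nu\log d \leq \alpha_{T+1}\delta\log d/T$, which is also lower order in $\alpha_{T+1}$.

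Third, after collecting terms we obtain the analog of (A.8):
\begin{align*}
\tfrac{1}{2}[\|\Qb(t+\delta)\|_2^2 - \|\Qb(t)\|_2^2] \leq \widetilde{C} + \alpha_{t+\delta}\log(Td) - \gamma\big(\varsigma\delta - 2L_g\log(Td)\big)\|\Qb(t)\|_1,
\end{align*}
where $\widetilde{C}$ collects all the $\gamma^2$, $\eta$, and Slater-feasibility terms (now multiplied by $\log(Td)$ whenever they arose from bounding a KL divergence, instead of $R^2$). Choosing $\delta = \Theta(\sqrt{K}L_g\log(Td)/(\rho\varsigma))$ — the same choice as in the Euclidean proof but with the extra $\log(Td)$ factor coming from the KL bound — makes the coefficient of $\|\Qb(t)\|_1$ strictly negative, and the same contradiction/induction argument used around equations (A.13)--(A.15) in the proof of Lemma \ref{lem:Q_bound} then gives a bound of the form $\|\Qb(t)\|_2 \leq 4(\widetilde{C}+2\widetilde{C}'\log(Td))/(\varsigma\delta\gamma) + 3\delta\gamma G$ for every $t\in[T+1]$. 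Plugging this back into the equivalent closed form $\alpha_{T+1} = O(\eta L_f^2 + \gamma^2 + \eta^{-1} + \gamma\max_\tau \|\Qb(\tau)\|_1)$ closes the self-referential recursion by requiring $1 - 2\sqrt{K}L_g\log(Td)/(\rho\varsigma\delta)\geq 1/2$, which holds by our choice of $\delta$. Finally substituting $\eta = [\max\{V_\infty(T),L_f^2\}]^{-1/2}$ and $\gamma = [\max\{V_\infty(T),L_f^2\}]^{1/4}$ produces the stated bounds, where the $\log(Td)$ in $\alpha_{T+1}$ comes from $D_{\KL}(\check{\xb},\tilde{\yb}_t)\leq\log(Td)$, and the $\log(Td)$ in $\|\Qb(T+1)\|_2$ comes from the same source compounded through $\delta$.

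The main obstacle will be keeping careful track of where the $\log(Td)$ factor enters and verifying that it enters only \emph{linearly} (not quadratically) in each of the two bounds; in particular, one must check that the additive term $\alpha_\tau\nu\log d$ summed over $\delta$ slots does not introduce an $\alpha_{T+1}\log(Td)$ feedback strong enough to destabilize the self-referential bound on $\alpha_{T+1}$. This is the reason $\nu=1/T$ is the right choice: it makes $\nu\log d \cdot \delta = O(\delta \log(Td)/T)$ negligible compared to the $\log(Td)$ factor already contributed by the KL bound. A secondary subtlety is that $\check{\xb}$ need not lie in the relative interior of $\Delta$, so $D_{\KL}(\check{\xb},\tilde{\xb}_t)$ could be infinite; Lemma \ref{lem:mixing} circumvents this precisely by ensuring $\tilde{\yb}_t$ has every coordinate at least $\nu/d$, making $D_{\KL}(\check{\xb},\tilde{\yb}_t)\leq\log(d/\nu)$ finite and controlled.
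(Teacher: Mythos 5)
Your proposal follows essentially the same route as the paper's proof: summing Lemma \ref{lem:dpp_bound_new_simplex} over $\delta$ slots at $\zb=\check{\xb}$, replacing the $R^2$ bounds by $\log(d/\nu)$ via Lemma \ref{lem:mixing}, absorbing the mixing terms with $\nu=1/T$, choosing $\delta=\Theta(\sqrt{K}L_g\log(Td)/\varsigma)$, and closing the self-referential bound on $\alpha_{T+1}$ with the same drift-and-contradiction argument. The only minor imprecision is the coefficient of $\|\Qb(t)\|_1$ in your drift inequality (the paper gets $\varsigma\delta-6L_g$, the $6$ coming from the $\ell_1$-diameter bound $\|\xb_{t-1}-\tilde{\xb}_t\|_1^2\leq 4$ in $W_t$, not a $\log(Td)$ factor), which does not affect the argument since your choice of $\delta$ dominates either threshold.
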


\begin{proof} We prove this lemma following the proof of Lemma \ref{lem:Q_bound} in Section \ref{sec:proof_Q_bound}. We first consider the case that $\delta \leq T$. For any $t \geq 0, \delta \geq 1$ satisfying $t+\delta \in [T+1]$, taking summation on both sides of the resulting inequality in Lemma \ref{lem:Q_bound_simplex} for $\delta$ slots and letting $\zb = \breve{\xb}$ as  defined in Assumption \ref{assump:slater} give
\begin{align}
\begin{aligned} \label{eq:Q_bound_init2}
&\frac{1}{2}\big[\|\Qb(t+\delta)\|_2^2 -\| \Qb(t) \|_2^2\big] + \sum_{\tau = t}^{t+\delta-1}\langle \nabla f^\tau(\xb_\tau), \xb_\tau - \breve{\xb} \rangle \\
&\quad\leq \sum_{\tau = t}^{t+\delta-1} \psi_\tau (\|\xb_\tau - \tilde{\yb}_\tau\|^2 +\|\xb_\tau - \tilde{\xb}_{\tau+1}\|^2)  + \frac{\eta}{2} \sum_{\tau = t}^{t+\delta-1} \| \nabla f^{\tau-1}(\xb_\tau) - \nabla f^\tau(\xb_\tau)\|^2_* \\
&\quad \quad +  \sum_{\tau = t}^{t+\delta-1} (\alpha_{\tau+1} - \alpha_\tau) D_{\KL}(\breve{\xb}, \tilde{\yb}_{\tau+1}) + W_t - W_{t+\delta} + \gamma  \sum_{\tau = t}^{t+\delta-1}  \langle \Qb(\tau) + \gamma \gb(\xb_{\tau-1}), \gb(\breve{\xb}) \rangle\\
& \quad \quad +  \sum_{\tau = t}^{t+\delta-1} 6\l( \xi_\tau + \eta L^2_f \r)  \nu^2 + \sum_{\tau = t}^{t+\delta-1} \alpha_\tau \nu \log d ,
\end{aligned}
\end{align}
where $\psi_t :=  3/2\cdot (\xi_t+ \eta L_f^2+  \gamma^2 L_g G  ) -  \alpha_t/2 + 1/\eta$ and $W_t :=  3/2\cdot \l(\xi_t+ \eta L_f^2\r) \|\xb_{t-1} - \tilde{\xb}_t\|_1^2 + \alpha_t D_{\KL}(\zb, \tilde{\yb}_t) - \gamma^2/2 \cdot \|\gb(\xb_{t-1})\|_2^2$. We bound the term in \eqref{eq:Q_bound_init2} and obtain
\begin{align}
\begin{aligned}\label{eq:Q_bound_I2}
\gamma  \sum_{\tau = t}^{t+\delta-1}  \langle \Qb(\tau) + \gamma \gb(\xb_{\tau-1}), \gb(\breve{\xb}) \rangle   \leq -\varsigma \gamma  \delta \|\Qb(t)\|_1 +  \frac{1}{2}  \gamma^2 \varsigma \delta^2  G + \gamma^2 \varsigma \delta  G,
\end{aligned}
\end{align}
whose proof is the same as \eqref{eq:Q_bound_I}.

According the setting of $\alpha_t$, we have
\begin{align*}
\alpha_\tau = \max \l\{ 3(\xi_\tau+ \eta L_f^2+  \gamma^2 L_g G  )  + \frac{2}{\eta} , \alpha_{\tau-1} \r\} ~~\text{with}~~\alpha_0 = 0, 
\end{align*}
which, by recursion, is equivalent to
\begin{align}
\begin{aligned}  \label{eq:Q_bound_alpha2}
\alpha_\tau 
&=3 \eta L_f^2+  3\gamma^2 L_g G  + \frac{2}{\eta} +   3 \max_{t' \in [\tau]}~  \xi_{t'} \\
&= 3 \eta L_f^2  +  3\gamma^2 (2L_g G + H^2)  + \frac{2}{\eta} +3\gamma L_g  \max_{t' \in [\tau]} ~ \| \Qb(t')\|_1,
\end{aligned}
\end{align}
which guarantees $\alpha_{\tau+1} \geq \alpha_\tau$ and
\begin{align}
\begin{aligned} \label{eq:Q_bound_II2}
& \psi_\tau (\|\xb_\tau - \tilde{\xb}_\tau\|_1^2 +\|\xb_{\tau-1} - \tilde{\xb}_\tau\|_1^2) \\
&\qquad = 3/2\cdot \gamma L_g \l(  \| \Qb(\tau)\|_1  - \max_{t' \in [\tau]} ~   \|\Qb(\tau) \|_1 \r)(\|\xb_\tau - \tilde{\xb}_\tau\|_1^2 +\|\xb_{\tau-1} - \tilde{\xb}_\tau\|_1^2) \leq 0.
\end{aligned}
\end{align}
Since $\alpha_{\tau+1} \geq \alpha_\tau$, we have
\begin{align}  \label{eq:Q_bound_III2}
\sum_{\tau = t}^{t+\delta-1} (\alpha_{\tau+1} - \alpha_\tau) D(\breve{\xb}, \tilde{\yb}_{\tau+1}) \leq \sum_{\tau = t}^{t+\delta-1} (\alpha_{\tau+1} - \alpha_\tau)   \log\frac{d}{\nu} =  \alpha_{t+\delta}  \log\frac{d}{\nu} - \alpha_t  \log\frac{d}{\nu},
\end{align}
where the inequality is by Lemma \ref{lem:mixing}. Moreover, we have
\begin{align} \label{eq:Q_bound_IV2}
- \sum_{\tau = t}^{t+\delta-1}\langle \nabla f^\tau(\xb_\tau), \xb_\tau - \breve{\xb} \rangle \stackrel{\textcircled{1}}{\leq} \sum_{\tau = t}^{t+\delta-1} \| \nabla f^\tau(\xb_\tau)\|_\infty \|\xb_\tau - \breve{\xb} \|_1 \stackrel{\textcircled{2}}{\leq}  2F \delta, 
\end{align}
where \textcircled{1} is by Cauchy-Schwarz inequality for the dual norm, and \textcircled{2} is by $\|\xb_\tau - \breve{\xb} \|_1 \leq \|\xb_\tau\|_1 + \|\breve{\xb} \|_1 = 2$ since $\xb_\tau, \breve{\xb} \in \Delta$ .  In addition, due to $1 \leq t+\delta \leq T+1$, we also have
\begin{align} \label{eq:Q_bound_V2}
\frac{\eta}{2} \sum_{\tau = t}^{t+\delta-1} \| \nabla f^{\tau-1}(\xb_\tau) - \nabla f^\tau(\xb_\tau)\|^2_\infty  \leq \frac{\eta}{2} \sum_{\tau = 1}^{T} \| \nabla f^{\tau-1}(\xb_\tau) - \nabla f^\tau(\xb_\tau)\|^2_\infty  \leq \frac{\eta}{2} V_\infty(T).
\end{align}
Then, we bound the term $W_t - W_{t+\delta}$. We have
\begin{align}
\begin{aligned} \label{eq:Q_bound_VI2}
W_t - W_{t+\delta} &\stackrel{\textcircled{1}}{\leq}  \frac{3}{2}\l(\xi_t+ \eta L_f^2\r)  \|\xb_{t-1} - \tilde{\xb}_t\|_1^2 + \alpha_t D_{\KL}(\breve{\xb}, \tilde{\yb}_t) + \frac{\gamma^2}{2} \|\gb(\xb_{t + \delta-1})\|_2^2\\
&\stackrel{\textcircled{2}}{\leq}  6\l(\xi_t+ \eta L_f^2\r)   + \alpha_t  \log\frac{d}{\nu} + \frac{\gamma^2}{2} G^2\\
&= 6[ \gamma L_g  \| \Qb(t)\|_1 + \gamma^2 (L_g G + H^2)+ \eta L_f^2 ]   + \alpha_t  \log\frac{d}{\nu} + \frac{\gamma^2}{2} G^2 ,
\end{aligned}
\end{align}
where \textcircled{1} is by removing the negative terms, and \textcircled{2} is due to Lemma \ref{lem:mixing} and $\|\xb_{t-1} - \tilde{\xb}_t\|_1^2 \leq (\|\xb_{t-1}\|_1 + \|\tilde{\xb}_t\|_1)^2 = 4$. 
In addition, we have
\begin{align} 
\begin{aligned}\label{eq:Q_bound_VII2}
\sum_{\tau = t}^{t+\delta-1} 6\l( \xi_\tau + \eta L^2_f \r)  \nu^2 + \sum_{\tau = t}^{t+\delta-1} \alpha_\tau \nu \log d & \leq \delta \alpha_{t+\delta}\nu \log d + 6\delta \l( \max_{\tau \in [t+\delta]}\xi_\tau + \eta L^2_f \r)  \nu^2 \\
&\leq \delta \alpha_{t+\delta}\nu \log d + 2 \nu^2 \delta \alpha_{t+\delta} \\
& = \delta \nu (\log d + 2 \nu)\alpha_{t+\delta}.
\end{aligned}
\end{align}
We combine \eqref{eq:Q_bound_I2} \eqref{eq:Q_bound_II2} \eqref{eq:Q_bound_III2} \eqref{eq:Q_bound_IV2} \eqref{eq:Q_bound_V2} \eqref{eq:Q_bound_VI2} \eqref{eq:Q_bound_VII2} with \eqref{eq:Q_bound_init2} and then obtain
\begin{align}
\begin{aligned} \label{eq:Q_delta_diff2}
&\frac{1}{2}\big[\|\Qb(t+\delta)\|_2^2 -\| \Qb(t) \|_2^2\big] \\
&\qquad \leq 6[ \gamma L_g  \| \Qb(t)\|_1 + \gamma^2 (L_g G + H^2)+ \eta L_f^2 ]   + \frac{\gamma^2}{2} G^2 + \frac{\eta}{2} V_\infty(T)  \\
&\qquad  \quad  + 2F   \delta + \alpha_{t+\delta}  \log\frac{d}{\nu}  -\varsigma \gamma  \delta \|\Qb(t)\|_1 +  \frac{1}{2}  \gamma^2 \varsigma \delta^2  G + \gamma^2 \varsigma \delta  G \\
&\qquad \leq \overline{C} + \l[ \log\frac{d}{\nu} + \delta \nu (\log d + 2 \nu) \r] \alpha_{t+\delta}   - \gamma \l(\varsigma  \delta - 6  L_g \r) \|\Qb(t)\|_1,
\end{aligned}
\end{align}
where we let
\begin{align}
\begin{aligned} \label{eq:def_bar_C2}
\tilde{C}=&  \l[6 (L_g G + H^2)  +  \frac{G^2 + \varsigma \delta^2 G}{2} + \varsigma \delta  G  \r] \gamma^2 + \l( 6L_f^2 +  \frac{V_\infty(T)}{2}\r) \eta   +  2 F \delta .
\end{aligned}
\end{align}
The following discussion is similar to Section \ref{sec:proof_Q_bound} after \eqref{eq:def_bar_C2}. Thus, we omit some details for a more clear description. We consider a time interval of $[1, T+1-\delta]$. Since $\alpha_{t+\delta} \leq \alpha_{T+1}$ for any $t\in [1, T+1-\delta]$ due to non-decrease of $\alpha_t$, and letting 
\begin{align}
 \delta \geq \frac{12L_g}{\varsigma} \label{eq:delta_cond12}
\end{align}
in \eqref{eq:Q_delta_diff2}, then we have for any $t\in [1,T+1-\delta]$
\begin{align} \label{eq:Q_delta_diff_22}
\|\Qb(t+\delta)\|_2^2 -\| \Qb(t) \|_2^2 \leq 2 \tilde{C} + 2 \l[ \log\frac{d}{\nu} + \delta \nu (\log d + 2 \nu) \r] \alpha_{T+1} - \varsigma  \delta \gamma  \|\Qb(t)\|_2, 
\end{align}
by the inequality $\|\Qb(t)\|_1 \geq \|\Qb(t)\|_2$. 

If we let $\|\Qb(t)\|_2 \geq 4\{ \tilde{C} + [ \log(d/\nu) + \delta \nu (\log d + 2 \nu) ] \alpha_{T+1}\} / (\varsigma \delta \gamma )$, by \eqref{eq:Q_delta_diff_22}, we have $\|\Qb(t+\delta)\|_2^2 \leq   (\| \Qb(t) \|_2  -  \varsigma  \delta \gamma/ 4  )^2$ 
and also $\| \Qb(t) \|_2  \geq  \varsigma  \delta \gamma/4$. Thus, we further have
\begin{align} \label{eq:Q_delta_drift2}
\|\Qb(t+\delta)\|_2 \leq  \| \Qb(t) \|_2  - \frac{ \varsigma  \delta \gamma}{4} .
\end{align}
Thus, along the same analysis as from \eqref{eq:Q_delta_drift} to \eqref{eq:Q_bound_al} in Section \ref{sec:proof_Q_bound}, we can prove that for any $t\in [1, T+1]$, we have
\begin{align}\label{eq:Q_bound_al2}
\|\Qb(t)\|_2 \leq \frac{ 4 ( \tilde{C} + B \alpha_{T+1}  )} {\varsigma \delta \gamma } + 3 \delta\gamma G,
\end{align}
where we let
\begin{align}\label{eq:def_B}
B =  \log\frac{d}{\nu} + \delta \nu (\log d + 2 \nu).
\end{align}
Note that in our setting, we let $\nu = 1/T$. Here, we also assume $\delta \leq T$. Thus, we have 
\begin{align*}
B = \log (T d) + \frac{\delta}{T} \log d + \frac{2}{T^2} \leq 3 \log (T d) , 
\end{align*}
since we assume $T > 2$ and $d \geq 1$.
 
Then we determine the value of $\alpha_{T+1}$. By plugging \eqref{eq:Q_bound_al2} into the setting of $\alpha_{T+1}$, we have 
\begin{align}
\begin{aligned} \label{eq:alpha_ineq2}
\alpha_{T+1} 
&= 3 \eta L_f^2  +  3\gamma^2 (2L_g G + H^2)  + \frac{2}{\eta} +3\gamma L_g  \max_{t' \in [T+1]} ~ \| \Qb(t')\|_1\\
&\leq 3 \eta L_f^2  +  3\gamma^2 (2L_g G + H^2)  + \frac{2}{\eta} +3\sqrt{K}\gamma L_g \l[\frac{ 4 ( \tilde{C} + B \alpha_{T+1}  )} {\varsigma \delta \gamma } + 3 \delta\gamma G \r]\\
&=\tilde{C}' +\frac{12 \sqrt{K} L_g B}{\varsigma \delta }  \alpha_{T+1},
\end{aligned}
\end{align}
with $\tilde{C}'$ defined as 
\begin{align}\label{eq:def_bar_C_p2}
\tilde{C}' :=& 3 \eta L_f^2  +  3\gamma^2 (2L_g G + H^2)  + \frac{2}{\eta} +3\sqrt{K}\gamma L_g \l(\frac{ 4 \tilde{C} } {\varsigma \delta \gamma } + 3 \delta\gamma G \r).
\end{align}
Furthermore, by the definition of $B$ in \eqref{eq:def_B}, when 
\begin{align}\label{eq:delta_cond22}
\delta \geq \frac{72\sqrt{K}L_g \log (Td) }{\varsigma} \geq \frac{24 \sqrt{K} L_g B}{\varsigma},
\end{align}
we have $1 - 12 \sqrt{K} L_g B/(\varsigma \delta)  \geq 1/2$. Then, with \eqref{eq:alpha_ineq2}, we obtain
\begin{align}\label{eq:alpha_bound2}
\alpha_{T+1} \leq 2\tilde{C}'.
\end{align}
Substituting \eqref{eq:alpha_bound2} back into \eqref{eq:Q_bound_al2} gives
\begin{align}\label{eq:Q_bound2} 
\|\Qb(t)\|_2 \leq  \frac{ 4 [ \tilde{C} + 6 \log(Td) \tilde{C}']} {\varsigma \delta \gamma } + 3 \delta\gamma G, ~~\forall t \in [T+1].
\end{align}
Next, we need to set the value of $\delta$. Note that the condition \eqref{eq:delta_cond12} always holds if \eqref{eq:delta_cond22} holds. Then, should choose the value from $\delta \geq 72\sqrt{K}L_g \log (Td)/\varsigma$. The dependence of $\alpha_{T+1}$ on $\delta$ is $\cO(\delta + \delta^{-1} + 1)$ and $\|\Qb(T+1)\|_2 = \cO( \delta + \delta^{-1} + \delta^{-2})$. Thus, both $\alpha_{T+1}$ and $\|\Qb(T+1)\|_2$ are convex functions w.r.t. $\delta$ in $[72\sqrt{K}L_g \log (Td)/\varsigma, +\infty)$. Then, for the upper bounds of $\alpha_{T+1}$ and $\|\Qb(T+1)\|_2$, we simply set 
\begin{align}\label{eq:delta_value2}
\delta = \frac{72\sqrt{K}\max\{L_g, 1\} \log (Td)}{\varsigma},
\end{align}  
such that the tightest upper bounds of them must be no larger than the values with setting $\delta$ as in \eqref{eq:delta_value2}. Therefore, for any $T \geq \delta$, the results \eqref{eq:alpha_bound2} and \eqref{eq:Q_bound2} hold. 

Then, similar to the discussion in Section \ref{sec:proof_Q_bound}, considering the case where $T <  \delta$, the inequalities \eqref{eq:alpha_bound2} and \eqref{eq:Q_bound2} also hold. Thus, we know that \eqref{eq:alpha_bound2} and \eqref{eq:Q_bound2} hold for any $T>0$ with $\delta$ determined in \eqref{eq:delta_value2}.

Thus, letting $\eta = (V_\infty(T) + L_f^2 + 1 )^{-1/2}$ and $\gamma = (V_\infty(T) + L_f^2 +1 )^{1/4}$, there exist absolute constants $\tilde{c}_1, \tilde{c}_2, \tilde{c}_3, \tilde{c}_4, \tilde{c}_5$, and $\tilde{c}_6$ that are $\textnormal{\texttt{poly}}(H, F, G, L_g, K)$ such that
\begin{align*}
&\tilde{C} \leq  \l( \tilde{c}_1 + \frac{\tilde{c}_2}{\varsigma} \r) \sqrt{V_\infty(T) + L_f^2 +1} \cdot \log^2 (Td) + \frac{\tilde{c}_3 \log (Td)}{\varsigma},\\
&\tilde{C}' \leq  \l( \tilde{c}_4 + \frac{\tilde{c}_5}{\varsigma} \r) \sqrt{V_\infty(T) + L_f^2 +1} \cdot \log (Td) + \frac{\tilde{c}_6}{\varsigma}.
\end{align*}
This further leads to
\begin{align*}
&\alpha_{T+1} \leq \l( \tilde{C}_1 + \frac{\tilde{C}_2}{\varsigma} \r) \sqrt{V_\infty(T) + L_f^2 +1} \cdot \log (Td)  + \frac{\tilde{C}_3}{\varsigma}, \\
&\|\Qb(T+1)\|_2 \leq \l( \tilde{C}'_1 + \frac{\tilde{C}'_2}{\varsigma} \r) (V_\infty(T) + L_f^2 +1)^{1/4} \log (Td) + \frac{\tilde{C}'_3}{ \varsigma},
\end{align*}
for some constants $\tilde{C}_1, \tilde{C}_2, \tilde{C}_3, \tilde{C}_1'$, $\tilde{C}'_2$, and $\tilde{C}'_3$ which are $\textnormal{\texttt{poly}}(H, F, G, L_g, K)$. This completes the proof.
\end{proof}

Moreover, with the above lemmas, we have the next lemma for the upper bound of the regret. 


%

\begin{lemma}\label{lem:regret_al_simplex} For any $\eta, \gamma \geq 0$, setting $\nu$ and $\alpha_{t}$ the same as in Theorem \ref{thm:simplex_regret}, Algorithm \ref{alg:ompd_simplex} ensures
\begin{align*}
\Regret(T) \leq \frac{\eta}{2}  V_\infty(T) + \tilde{C}''_1 L_f^2 \eta   + \tilde{C}''_2 \gamma^2   +  3\log (Td)   \alpha_{T+1}.
\end{align*}
where $\tilde{C}''_1$ and $\tilde{C}''_2$ are absolute constants that are $\textnormal{\texttt{poly}}(H, G, L_g)$.
\end{lemma}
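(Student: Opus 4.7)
The plan is to mimic the proof of Lemma \ref{lem:regret_al} in Section \ref{sec:proof_regret_al}, replacing the Bregman divergence $D(\cdot,\cdot)$ with $D_{\mathrm{KL}}(\cdot,\cdot)$, the squared norm $\|\cdot\|^2$ with $\|\cdot\|_1^2$, and the gradient variation $V_*(T)$ with $V_\infty(T)$, while carefully accounting for the extra mixing terms $\alpha_t \nu \log d$ and $6(\xi_t + \eta L_f^2)\nu^2$ that appear in Lemma \ref{lem:dpp_bound_new_simplex}. By convexity of $f^t$, start from
\[
\Regret(T) \leq \sum_{t=1}^T \langle \nabla f^t(\xb_t), \xb_t - \xb^*\rangle,
\]
then instantiate Lemma \ref{lem:dpp_bound_new_simplex} at $\zb = \xb^*$ for each $t$. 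Since $\xb^*$ is feasible (so $g_k(\xb^*)\leq 0$) and $Q_k(t)+\gamma g_k(\xb_{t-1})\geq 0$ as in \eqref{eq:Q_g_pos}, the last inner product term is non-positive and can be dropped, exactly as in \eqref{eq:regret_neg_last}.

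Next, I would sum the resulting inequality from $t=1$ to $T$. The setting of $\alpha_t$ in Theorem \ref{thm:simplex_regret}, rewritten recursively as in \eqref{eq:Q_bound_alpha2}, guarantees that $\psi_t \leq \frac{3}{2}\gamma L_g(\|\Qb(t)\|_1 - \max_{t'\in [t]}\|\Qb(t')\|_1)\leq 0$, so the first term $\psi_t(\|\xb_t-\tilde{\yb}_t\|_1^2 + \|\xb_t-\tilde{\xb}_{t+1}\|_1^2)$ vanishes upon summation (same mechanism as \eqref{eq:regret_x_neg}). The telescoping sum $\sum_t (W_t - W_{t+1}) = W_1 - W_{T+1}$ is controlled as in \eqref{eq:regret_V_diff}, using $\|\xb_0-\tilde{\xb}_1\|_1^2 \leq 4$, $D_{\mathrm{KL}}(\xb^*,\tilde{\yb}_1)\leq \log(d/\nu)$ from Lemma \ref{lem:mixing}, and $\Qb(0)=\boldsymbol{0}$, which yields a constant times $\gamma^2 + \eta L_f^2 + \alpha_1 \log(d/\nu)$. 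The $\alpha_{t+1}-\alpha_t$ contributions collapse via $\sum_t (\alpha_{t+1}-\alpha_t) D_{\mathrm{KL}}(\xb^*, \tilde{\yb}_{t+1}) \leq \alpha_{T+1} \log(d/\nu)$, again by Lemma \ref{lem:mixing}. The variation term handles directly: $\frac{\eta}{2}\sum_t \|\nabla f^{t-1}(\xb_t) - \nabla f^t(\xb_t)\|_\infty^2 \leq \frac{\eta}{2} V_\infty(T)$.

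The extra pieces unique to the simplex case are $\sum_{t=1}^T [6(\xi_t + \eta L_f^2)\nu^2 + \alpha_t \nu \log d]$. These can be bounded exactly as in \eqref{eq:Q_bound_VII2}: since $\max_{t\in[T+1]} \xi_t \leq \alpha_{T+1}/3$ by the definition of $\alpha_t$, and $\alpha_t \leq \alpha_{T+1}$ monotonically, the combined sum is dominated by $T\nu(\log d + 2\nu)\alpha_{T+1}$. Plugging in $\nu = 1/T$ and using $T>2$, $d\geq 1$ gives $\log d + 2/T \leq 3\log(Td)$, so this contribution is at most $3\log(Td)\,\alpha_{T+1}$, which is the source of the $3\log(Td)\cdot\alpha_{T+1}$ coefficient in the statement.

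Collecting everything and absorbing the constants $\|\Qb(1)\|_2^2 \leq \gamma^2 G^2$, the fixed bounds $R^2\to \log(d/\nu)$, and the rest into $\tilde C_1''$ and $\tilde C_2''$ (polynomials in $H,G,L_g$), produces
\[
\Regret(T) \leq \tfrac{\eta}{2} V_\infty(T) + \tilde C_1'' L_f^2 \eta + \tilde C_2'' \gamma^2 + 3\log(Td)\cdot \alpha_{T+1}.
\]
The main obstacle I anticipate is bookkeeping: ensuring that the $\log(d/\nu)$ factors arising from the three different routes (telescoped KL divergence, the $W_t$ boundary term, and the mixing residuals) combine into a single clean $3\log(Td)\cdot \alpha_{T+1}$ coefficient without inflating the polynomial constants hidden in $\tilde C_1''$ and $\tilde C_2''$. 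Otherwise, the argument is a step-by-step translation of the Euclidean proof in Section \ref{sec:proof_regret_al}.
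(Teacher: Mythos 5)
Your proposal is correct and follows essentially the same route as the paper's proof: convexity to reduce regret to the linearized sum, instantiating Lemma \ref{lem:dpp_bound_new_simplex} at $\zb=\xb^*$, dropping the non-positive constraint inner product, telescoping $W_t$ and the $\alpha_t$ increments against the $\log(d/\nu)$ bound from Lemma \ref{lem:mixing}, and controlling the mixing residuals via $\max_\tau \xi_\tau \le \alpha_{T+1}/3$ and $\nu=1/T$. The only bookkeeping nuance is that in the paper the final $3\log(Td)$ coefficient aggregates both the telescoped $\alpha_{T+1}\log(d/\nu)$ term and the residual $(\log d + 2/T)\alpha_{T+1}$ term, rather than coming from the mixing residuals alone, but you flag exactly this concern and it does not affect the result.
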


\begin{proof} 

According to \eqref{eq:regret_init}, we have
\begin{align}
\begin{aligned} \label{eq:regret_init2} 
\Regret(T) &= \sum_{t=1}^T f^t(\xb_t) - \sum_{t=1}^T f^t(\xb^*)\leq \sum_{t=1}^T \langle \nabla f^t(\xb_t), \xb_t - \xb^* \rangle.
\end{aligned}
\end{align}
Setting $\zb = \xb^*$ in Lemma \ref{lem:dpp_bound_new_simplex} gives
\begin{align}
\begin{aligned} \label{eq:regret_dpp2}
&\frac{1}{2}\big[\|\Qb(t+1)\|_2^2 -\| \Qb(t) \|_2^2\big] + \langle \nabla f^t(\xb_t), \xb_t - \xb^* \rangle \\
&\qquad \leq \psi_t ( \|\xb_t - \tilde{\yb}_t\|_1^2 + \|\xb_t - \tilde{\xb}_{t+1}\|_1^2) + \frac{\eta}{2}\| \nabla f^{t-1}(\xb_t) - \nabla f^t(\xb_t)\|^2_\infty  \\
&\qquad \quad +  (\alpha_{t+1} - \alpha_t) D_{\KL}(\xb^*, \tilde{\yb}_{t+1}) + \gamma  \langle \Qb(t) + \gamma \gb(\xb_{t-1}), \gb(\xb^*) \rangle\\
&\qquad \quad +  6\l( \xi_t + \eta L^2_f \r)  \nu^2 + \alpha_t \nu \log d +   W_t - W_{t+1},
\end{aligned}
\end{align}
where $\psi_t :=  3/2\cdot (\xi_t+ \eta L_f^2+  \gamma^2 L_g G  ) -  \alpha_t/2 + 1/\eta$ and $W_t :=  3/2\cdot \l(\xi_t+ \eta L_f^2\r) \|\xb_{t-1} - \tilde{\xb}_t\|_1^2 + \alpha_t D_{\KL}(\xb^*, \tilde{\yb}_t) - \gamma^2/2 \cdot \|\gb(\xb_{t-1})\|_2^2$.

Since $g_k(\xb^*) \leq 0$ and also $Q_k(\xb_t) + \gamma g_k(\xb_{t-1}) \geq 0$ shown in \eqref{eq:Q_g_pos}, then the last term in \eqref{eq:regret_dpp2} is bounded as
\begin{align} \label{eq:regret_neg_last2}
\gamma  \langle \Qb(t) + \gamma \gb(\xb_{t-1}), \gb(\xb^*) \rangle = \sum_{k=1}^K [Q_k(\xb_t) + \gamma g_k(\xb_{t-1})] g_k(\xb^*)  \leq 0.
\end{align}
Combining \eqref{eq:regret_dpp2} \eqref{eq:regret_neg_last2} with  \eqref{eq:regret_init2} yields
\begin{align}
\begin{aligned} \label{eq:regret_al2}
\Regret(T) &\leq \sum_{t=1}^T \langle \nabla f^t(\xb_t), \xb_t - \xb^* \rangle \\
&=  \sum_{t=1}^T \psi_t ( \|\xb_t - \tilde{\yb}_t\|_1^2 + \|\xb_t - \tilde{\xb}_{t+1}\|_1^2) + \sum_{t=1}^T \frac{\eta}{2}\| \nabla f^{t-1}(\xb_t) - \nabla f^t(\xb_t)\|^2_\infty  \\
& \quad + \sum_{t=1}^T  (\alpha_{t+1} - \alpha_t) D_{\KL}(\xb^*, \tilde{\yb}_{t+1}) + \sum_{t=1}^T 6\l( \xi_t + \eta L^2_f \r)  \nu^2 + \sum_{t=1}^T \alpha_t \nu \log d\\
& \quad  +   W_1 - W_{T+1} + \frac{1}{2}\big[\|\Qb(1)\|_2^2 -\| \Qb(T+1) \|_2^2\big].
\end{aligned}
\end{align}
We analyze the terms in \eqref{eq:regret_al2} in the following way. By the setting of $\alpha_t$ 
\begin{align*}
\alpha_t = \max \l\{ 3(\xi_t+ \eta L_f^2+  \gamma^2 L_g G  )  + \frac{2}{\eta} , \alpha_{t-1} \r\} ~~\text{with}~~\alpha_0 = 0, 
\end{align*}
which, by recursion, is equivalent to
\begin{align}
\begin{aligned}  \label{eq:alpha_set_equ2}
\alpha_t 
&=3 \eta L_f^2+  3\gamma^2 L_g G  + \frac{2}{\eta} +   3 \max_{t' \in [t]}~  \xi_{t'} \\
&= 3 \eta L_f^2  +  3\gamma^2 (2L_g G + H^2)  + \frac{2}{\eta} +3\gamma L_g  \max_{t' \in [t]} ~ \| \Qb(t')\|_1.
\end{aligned}
\end{align}
This setting guarantees that
\begin{align*}
\alpha_{t+1} \geq \alpha_t, ~~~\forall t \geq 0,
\end{align*}
and also
\begin{align}
\begin{aligned} \label{eq:regret_x_neg2}
&\sum_{t=1}^T\psi_t  (\|\xb_t - \tilde{\xb}_t\|^2 +\|\xb_{t-1} - \tilde{\xb}_t\|^2) \\
&\qquad = \sum_{t=1}^T 3\gamma L_g \bigg(  \|\Qb(t)\|_1 - \max_{t' \in [t]} ~   \| \Qb(t') \|_1 \bigg)(\|\xb_t - \tilde{\xb}_t\|^2 +\|\xb_{t-1} - \tilde{\xb}_t\|^2) \leq 0.
\end{aligned}
\end{align}
Since $\alpha_{t+1} \geq \alpha_t, \forall t$, thus we have
\begin{align} \label{eq:regret_alpha_sum2}
\sum_{t=1}^T (\alpha_{t+1} - \alpha_t) D_{\KL}(\xb^*, \tilde{\yb}_{t+1}) \leq \sum_{t=1}^T (\alpha_{t+1} - \alpha_t) \log \frac{d}{\nu} =  \alpha_{T+1} \log \frac{d}{\nu} - \alpha_1 \log \frac{d}{\nu},
\end{align}
where the inequality is by Lemma \ref{lem:mixing}. Moreover, by the definition of $V_\infty(T)$, we have
\begin{align}\label{eq:regret_variation2}
\frac{\eta}{2} \sum_{t=1}^T  \| \nabla f^{t-1}(\xb_t) - \nabla f^t(\xb_t)\|^2_* \leq \frac{\eta}{2}  V_\infty(T).
\end{align}
In addition, we bound the term $W_1 - W_{T+1}$ by
\begin{align}
\begin{aligned} \label{eq:regret_V_diff2}
W_1 - W_{T+1} &\stackrel{\textcircled{1}}{\leq} \frac{3}{2}\l(\xi_1+ \eta L_f^2\r) \|\xb_0 - \tilde{\xb}_1\|_1^2 + \alpha_t D_{\KL}(\xb^*, \tilde{\yb}_1) + \frac{\gamma^2}{2} \|\gb(\xb_T)\|_2^2\\
&\stackrel{\textcircled{2}}{\leq} 6[ \gamma L_g  \| \Qb(1)\|_1 + \gamma^2 (L_g G + H^2)+ \eta L_f^2 ]  + \alpha_1 \log \frac{d}{\nu} + \frac{\gamma^2}{2} G^2   \\
&\stackrel{\textcircled{3}}{\leq} 6 [\eta L_f^2 +  \gamma^2 (2L_g G + H^2)]  +  \alpha_1 \log \frac{d}{\nu} + \frac{\gamma^2G^2}{2}, 
\end{aligned}
\end{align}
where \textcircled{1} is by removing negative terms, \textcircled{2} is by $\|\xb_0 - \tilde{\xb}_1\|^2 \leq (\|\xb_0\|_1 + \|\tilde{\xb}_1\|)^2=4$ and $D_{\KL}(\xb^*, \tilde{\yb}_1)   \leq \log \frac{d}{\nu}$ as well as $\|\gb(\xb_T)\|_2^2 \leq \|\gb(\xb_T)\|_1^2 \leq G^2$ according to Assumptions \ref{assump:global} and \ref{assump:bounded}, and \textcircled{3} is by (4) of Lemma \ref{lem:Q_linear} and $\Qb(0)= \boldsymbol{0}$.
Moreover, we have
\begin{align} 
\begin{aligned}\label{eq:extra_err2}
\sum_{t = 1}^{T} 6\l( \xi_t + \eta L^2_f \r)  \nu^2 + \sum_{t = 1}^{T} \alpha_t \nu \log d  &\leq T \alpha_{T+1}\nu \log d +  6T\bigg( \max_{\tau \in [T]}\xi_\tau + \eta L^2_f \bigg)  \nu^2 \\
&\leq T \alpha_{T+1}\nu \log d + 2 \nu^2 T \alpha_{T+1} =\l(\log d + \frac{2}{T} \r)\alpha_{T+1}.
\end{aligned}
\end{align}
Therefore, combining \eqref{eq:regret_x_neg2} \eqref{eq:regret_alpha_sum2} \eqref{eq:regret_variation2} \eqref{eq:regret_V_diff2} \eqref{eq:extra_err2} with \eqref{eq:regret_al2}, further by $ \|\Qb(1)\|_2^2 = \gamma^2 \|\gb(\xb_0)\|_2^2 \leq \gamma^2 \|\gb(\xb_0)\|_1^2 \leq \gamma^2 G^2$, we have
\begin{align*}
\Regret(T) &\leq \frac{\eta}{2}  V_\infty(T) + 6[\eta L_f^2 +  \gamma^2 (2L_g G + H^2)]    +  \alpha_{T+1} R^2 + \frac{3\gamma^2G^2}{2} \\
&\leq \frac{\eta}{2}  V_\infty(T) + 6 L_f^2 \eta + \l[ 6(2L_g G + H^2) + \frac{3G^2}{2} \r]  \gamma^2 +  \l(\log \frac{d}{\nu} + \log d + \frac{2}{T} \r) \alpha_{T+1} \\
&\leq \frac{\eta}{2}  V_\infty(T) + 6 L_f^2 \eta + \l( 12L_g G + 6 H^2 + \frac{3G^2}{2} \r)  \gamma^2 +  3\log (Td) \cdot   \alpha_{T+1} .
\end{align*} 
This completes the proof.
\end{proof}

\subsection{Proof of Theorem \ref{thm:simplex_regret}}
\begin{proof} According to Lemma \ref{lem:regret_al_simplex}, by the settings of $\eta$ and $\gamma$, we have
\begin{align*}
\Regret(T) \leq(1/2 + \tilde{C}''_1) \sqrt{V_\infty(T)+ L_f^2 + 1}   + \tilde{C}''_2 \sqrt{V_\infty(T)+ L_f^2 + 1}  +  3\log (Td) \cdot  \alpha_{T+1},
\end{align*}
where the inequality is due to $\eta/2\cdot  V_\infty(T) + \tilde{C}''_1 L_f^2 \eta \leq \eta (1/2 + \tilde{C}''_1) (V_\infty(T) + L_f^2) \leq (1/2 + \tilde{C}''_1) \sqrt{V_\infty(T)+ L_f^2 + 1}$. Further combining the above inequality with the bound of $\alpha_{T+1}$ in Lemma \ref{lem:Q_bound_simplex} yields
\begin{align*}
\Regret(T) = \l(\hat{C}_1 + \frac{\hat{C}_2}{\varsigma} \r) \sqrt{V_\infty(T)+ L_f^2 + 1} \log^2 (Td) + \frac{\hat{C}_3 \log(Td)}{\varsigma},
\end{align*}
for some constants $\hat{C}_1, \hat{C}_2, \hat{C}_3$ determined by $\tilde{C}_1, \tilde{C}_2, \tilde{C}_3$, $\tilde{C}''_1, \tilde{C}''_2, \tilde{C}''_3$. This completes the proof.
\end{proof}

\subsection{Proof of Theorem \ref{thm:simplex_constr}}

\begin{proof} According to Lemma \ref{lem:constr_al} and the drift bound of $\Qb(T+1)$ in Lemma \ref{lem:Q_bound_simplex}, with the setting of $\gamma$, we have
\begin{align*}
\Violation(T,k)  \leq \frac{1}{\gamma} \|\Qb(T+1)\|_2 \leq   \l( \tilde{C}'_1 + \frac{\tilde{C}'_2}{\varsigma} \r)  \log (Td) + \frac{\tilde{C}'_3}{ \varsigma } \leq  \l( \hat{C}_1 + \frac{\hat{C}_2}{\varsigma} \r)  \log (Td) ,
\end{align*}
where the second inequality is by $1/\gamma \leq 1$. Specifically, we set $\hat{C}_1 =\tilde{C}_1'$, $\hat{C}_2 = \tilde{C}'_2 + \tilde{C}'_3$. This completes the proof.
\end{proof}

\end{document}